\newcommand{\R}{\mathbb R} % real numbers
\newcommand{\F}{\mathcal F} % feasible set
\newcommand{\LS}{\mathcal L^0} % level set
\DeclarePairedDelimiter{\norm}{\lVert}{\rVert} % norm
\DeclarePairedDelimiter{\abs}{\lvert}{\rvert} % absolute value
\DeclareMathOperator*{\argmin}{Argmin} % argmin
\DeclareMathOperator*{\argmax}{Argmax} % argmax
\newtheorem{assumption}{Assumption}
\newtheorem{lemma}{Lemma}
\newtheorem{proposition}{Proposition}
\newtheorem{remark}{Remark}
\newtheorem{theorem}{Theorem}
\newtheorem{s_c}{SC (Stepsize Condition)}
\newcommand{\minitab}[2][l]{\begin{tabular}{#1}#2\end{tabular}}
\newcommand{\subalign}[1]{%
  \vcenter{%
    \Let@ \restore@math@cr \default@tag
    \baselineskip\fontdimen10 \scriptfont\tw@
    \advance\baselineskip\fontdimen12 \scriptfont\tw@
    \lineskip\thr@@\fontdimen8 \scriptfont\thr@@
    \lineskiplimit\lineskip
    \ialign{\hfil$\m@th\scriptstyle##$&$\m@th\scriptstyle{}##$\crcr
      #1\crcr
    }%
  }
}
\newcommand{\email}[1]{E-mail: \href{mailto:#1}{\texttt{#1}}}
\begin{document}

\sloppy

\thispagestyle{plain}

\setcounter{page}{1}

{\centering
%-----------------------%
% INSERT HERE THE TITLE %
%-----------------------%
\textbf{\LARGE An almost cyclic 2-coordinate descent method for singly linearly constrained problems}

\bigskip\bigskip
%-------------------------%
% INSERT HERE THE AUTHORS %
%-------------------------%
Andrea Cristofari$^*$
\bigskip

}

%----------------------------------%
% INSERT HERE AUTHORS' INFORMATION %
%----------------------------------%
\begin{center}
\small{\noindent$^*$Department of Mathematics, \\
University of Padua, \\
Via Trieste, 63, 35121 Padua, Italy \\
\email{andrea.cristofari@unipd.it}
}
\end{center}

\bigskip\par\bigskip\par
\noindent \textbf{Abstract.}
A block decomposition method is proposed for minimizing a (possibly non-convex) continuously differentiable function
subject to one linear equality constraint and simple bounds on the variables.
The proposed method iteratively selects a pair of coordinates according to an almost cyclic strategy that does not use
first-order information, allowing us not to compute the whole gradient of the objective function during the algorithm.
Using first-order search directions to update each pair of coordinates, global convergence to stationary points is established
for different choices of the stepsize under an appropriate assumption on the level set.
In particular, both inexact and exact line search strategies are analyzed.
Further, linear convergence rate is proved under standard additional assumptions.
Numerical results are finally provided to show the effectiveness of the proposed method.

\bigskip\par
\noindent \textbf{Keywords.} Block coordinate descent methods. Block decomposition methods. Linear convergence rate. SVM.

\bigskip\par
\noindent \textbf{MSC2000 subject classifications.} 90C06. 90C30. 65K05.

\section{Introduction}
Block coordinate descent methods, also known as block decomposition methods, are algorithms that
iteratively update a suitably chosen subset of variables, usually referred to as \textit{working set}, trough an appropriate \textit{optimization step}.
Numerous variants of block coordinate descent methods have been proposed in the literature
that essentially differ from each other in two aspects: the working set selection and the optimization step
(see, e.g.,~\cite{wright:2015} and the references therein for an overview of block coordinate descent methods
in unconstrained optimization).
In the last decades, block coordinate descent methods gained great popularity, especially to address large structured problems,
such as those arising in machine learning, where classical algorithms may not be
so efficient and, sometimes, even not applicable for computational reasons.
Moreover, block coordinate descent methods are well suited for parallelization, allowing to exploit modern computer architectures.

In this paper, we are concerned with the minimization of a continuously differentiable function
subject to one linear equality constraint and simple bounds on the variables.
Many relevant problems can be formulated in this way, such as, e.g., Support Vector Machine training,
the continuous quadratic knapsack problem, resource allocation problems, the page rank problem,
the Chebyshev center problem and the coordination of multi-agent systems.

In the literature, most of the block coordinate descent methods proposed for this class of problems use
gradient information to identify a subset of variables that violate some optimality condition and guarantee, once updated, a certain decrease in the objective
function~\cite{platt:1999,joachims:1999,lin:2001,palagi:2005,fan:2005,lin:2009,beck:2014}.
Other methods select variables in order to satisfy an appropriate descent condition
with a decreasing tolerance~\cite{konnov:2016}, or follow a Gauss-Seidel (cyclic) strategy~\cite{lucidi:2007}.
Different working set selection rules, based on sufficient predicted descent, have also been studied in~\cite{tseng:2010}.
Moreover, a Jacobi-type algorithm has been devised in~\cite{liuzzi:2011} and
a class of parallel decomposition methods for quadratic objective functions has been proposed in~\cite{manno:2018}.

In addition to the above algorithms, different versions of random coordinate descent methods
have been proposed in~\cite{necoara:2013,necoara:2014,reddi:2014,patrascu:2015,necoara:2017},
characterized by the fact that the working set is randomly chosen from a given probability distribution.
From a theoretical point of view, random coordinate descent methods have good convergence properties, given in expectation,
and they turn out to be efficient also in practice.
In particular, since random coordinate descent methods do not use first-order information to choose the working set,
the whole gradient of the objective function does not need to be computed during the iterations,
leading to good performances when the objective function has cheap partial derivatives.

In our method, a working set of two coordinates is iteratively chosen according to the following almost cyclic strategy:
one coordinate is selected in a cyclic manner, while the other one is obtained by considering
the distance of each variable from its nearest bound in some points produced by the algorithm.
We see that this working set selection rule does not use first-order information. So, similarly as in random coordinate descent methods,
the whole gradient of the objective function does not need to be computed during the algorithm
and high efficiency is still achieved when the partial derivatives of the objective function are cheap.
Anyway, differently from random coordinate descent methods, the proposed algorithm has deterministic convergence properties.

More in detail, once a pair of coordinates is selected in the working set, our algorithm
performs a minimization step by moving along a first-order search direction with a certain stepsize.
We first give a general condition on the stepsize that guarantees global convergence to stationary points,
under the assumption that every point of the level set has at least one component strictly between the lower and the upper bound.
Note that this assumption is automatically satisfied in many cases: e.g., when the feasible set is the unit simplex,
or when at least one variable has no bounds.
Then, we describe some practical ways to compute the stepsize, considering different classes of objective functions:
the Armijo line search can be used for general non-convex objective functions, overestimates of the Lipschitz constant
can be used for objective functions with Lipschitz continuous gradient,
and the exact line search can be used when the objective function is strictly convex.

We also show that the proposed method converges linearly under standard additional assumptions.
In particular, two different results are given: asymptotic linear convergence rate is proved when
there are finite bounds on some (or all) of the variables, while non-asymptotic linear convergence rate is proved when there are no bounds on the variables.

Lastly, experimental simulations performed on different classes of test problems show promising results of
the proposed algorithm in comparison with other block coordinate descent methods.

The rest of the paper is organized as follows.
In Section~\ref{sec:prel}, we introduce the notation and recall some preliminary results.
In Section~\ref{sec:alg}, we present the algorithm and carry out the convergence analysis.
In Section~\ref{sec:stepsize}, we describe some practical line search strategies.
In Section~\ref{sec:rate}, we analyze the convergence rate of the algorithm.
In Section~\ref{sec:res}, we show the numerical results.
Finally, we draw some conclusions in Section~\ref{sec:conc}.

\section{Preliminaries and notation}\label{sec:prel}
Let us introduce the notation.
Given a vector $x \in \R^n$, we indicate by $x_i$ the $i$th entry of $x$.
We denote by $e_i \in \R^n$ the vector made of all zeros except for the $i$th entry that is equal to $1$.
Given a function $f \colon \R^n \to \R$, the gradient of $f$ is indicated by $\nabla f$,
the $i$th partial derivative of $f$ is indicated by $\nabla_i f$ and the Hessian matrix of $f$ is indicated by $\nabla^2 f$.
The derivative of a function $f \colon \R \to \R$ is denoted by $\dot f$.
The Euclidean norm of a vector $x \in \R^n$ is indicated by $\norm x$.
%Given a \mbox{scalar $y$} and an integer $k$, if we write $y^k$ the superscript generally refers
%to the iteration counter of the algorithm but, if the apex indicates the power, we use round brackets, i.e., we write $(y)^k$.

Throughout the paper, we focus on the following singly linearly constrained problem with lower and upper bounds on the variables:
\begin{equation}\label{prob}
\begin{split}
& \min \, f(x) \\
& \sum_{i=1}^n x_i = b \\
& l_i \le x_i \le u_i, \quad i = 1,\ldots,n,
\end{split}
\end{equation}
where $f \colon \R^n \to \R$ is a continuously differentiable function, $b \in \R$
and, for all $i = 1,\ldots,n$, we have $l_i < u_i$, with $l_i \in \R \cup \{-\infty\}$ and $u_i \in \R \cup \{\infty\}$.
By slight abuse of standard mathematical notation, we allow variable bounds to be infinite.

Note that every problem of the form
\[
\min \, \{\omega(s) \colon \sum_{i=1}^n a_i s_i = b, \; \bar l_i \le s_i \le \bar u_i, \, i = 1,\ldots,n\}
\]
with $\omega \colon \R^n \to \R$, $b\in \R$ and $a_i \ne 0$, $\bar l_i < \bar u_i$, $i = 1,\ldots,n$, can be rewritten as in~\eqref{prob}
via the following variable transformation: $x_i = a_i s_i$, $i = 1,\ldots,n$,
thus considering the objective function  $f(x) := \omega \bigl((x_1/a_1), \ldots, (x_n/a_n)\bigr)$
and setting the lower and the upper bound on $x$ according to the above transformation.

From now on, we denote the feasible set of problem~\eqref{prob} by $\F$.
Throughout the paper, we assume that $\F \ne \emptyset$.
Moreover, the terms \textit{variable} and \textit{coordinate} will be used interchangeably to indicate each $x_i$, $i=1,\ldots,n$.

Finally, let us recall the following characterization of stationary points of problem~\eqref{prob},
which can be easily derived from KKT conditions.
\begin{proposition}
A feasible point $x^*$ of problem~\eqref{prob} is stationary if and only if there exists $\lambda^* \in \R$ such that,
for all $i = 1,\ldots,n$,
\begin{equation}\label{stat}
\nabla_i f(x^*)
\begin{cases}
\ge \lambda^*, \quad & \text{if } x^*_i = l_i, \\
= \lambda^*,   \quad & \text{if } x^*_i \in (l_i, u_i), \\
\le \lambda^*, \quad & \text{if } x^*_i = u_i.
\end{cases}
\end{equation}
\end{proposition}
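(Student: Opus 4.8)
The plan is to derive \eqref{stat} directly from the first-order KKT conditions of problem~\eqref{prob}, exploiting the fact that all the constraints are affine. Because of linearity, a constraint qualification holds automatically, so the KKT conditions are both necessary and sufficient to characterize stationarity (equivalently, the absence of a feasible descent direction). First I would attach a free multiplier $\mu \in \R$ to the equality constraint $\sum_{i=1}^n x_i = b$ and nonnegative multipliers $\alpha_i \ge 0$, $\beta_i \ge 0$ to the bound constraints $l_i - x_i \le 0$ and $x_i - u_i \le 0$, respectively. Writing stationarity of the Lagrangian componentwise then gives, for all $i$,
\[
\nabla_i f(x^*) + \mu - \alpha_i + \beta_i = 0,
\]
together with $\alpha_i(l_i - x^*_i) = 0$ and $\beta_i(x^*_i - u_i) = 0$.

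For the ``only if'' direction I would set $\lambda^* := -\mu$ and split into the three cases appearing in~\eqref{stat}. If $x^*_i \in (l_i,u_i)$, both bound constraints are inactive, so complementarity forces $\alpha_i = \beta_i = 0$ and the stationarity equation yields $\nabla_i f(x^*) = \lambda^*$. If $x^*_i = l_i$, then $x^*_i < u_i$ gives $\beta_i = 0$, whence $\nabla_i f(x^*) = \lambda^* + \alpha_i \ge \lambda^*$; symmetrically, if $x^*_i = u_i$ then $\alpha_i = 0$ and $\nabla_i f(x^*) = \lambda^* - \beta_i \le \lambda^*$. This reproduces~\eqref{stat} exactly.

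For the converse, given $\lambda^*$ satisfying~\eqref{stat}, I would construct admissible multipliers by hand: set $\mu = -\lambda^*$ and, for each $i$, take $\alpha_i = \nabla_i f(x^*) - \lambda^* \ge 0,\ \beta_i = 0$ when $x^*_i = l_i$; take $\beta_i = \lambda^* - \nabla_i f(x^*) \ge 0,\ \alpha_i = 0$ when $x^*_i = u_i$; and take $\alpha_i = \beta_i = 0$ otherwise. One then checks directly that these are nonnegative, satisfy the stationarity equation, and obey complementary slackness (the sign information in~\eqref{stat} is precisely what guarantees nonnegativity of the active multiplier). Since the constraints are affine, such a KKT point is a stationary point, closing the equivalence.

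I do not expect a real obstacle here, as the core is a routine three-way case analysis; the only point requiring a little care is the handling of infinite bounds. Because a feasible point has finite coordinates, the cases $x^*_i = l_i$ and $x^*_i = u_i$ in~\eqref{stat} can arise only for finite $l_i$, $u_i$, so the corresponding constraint and multiplier are well defined; whenever a bound equals $\pm\infty$ the constraint is simply absent and its multiplier is identically zero, which is consistent with the construction above.
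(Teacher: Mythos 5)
Your proof is correct and takes exactly the route the paper intends: the paper states this proposition without proof, remarking only that it ``can be easily derived from KKT conditions,'' and your argument is precisely that derivation --- the identification $\lambda^* = -\mu$, the three-way case analysis via complementary slackness, and the explicit multiplier construction for the converse, with affinity of the constraints correctly invoked to make KKT both necessary and sufficient for stationarity. Your closing remark on infinite bounds (that the boundary cases in~\eqref{stat} can only occur for finite $l_i$, $u_i$, so the absent constraints simply carry zero multipliers) addresses the one detail worth care.
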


\section{The Almost Cyclic 2-Coordinate Descent (AC2CD) method}\label{sec:alg}
In this section, we present the algorithm for solving problem~\eqref{prob}
and we analyze its convergence properties to stationary points.

\subsection{Description of the algorithm}
The proposed Almost Cyclic 2-Coordinate Descent (AC2CD) method is a block decomposition method that
iteratively performs a minimization step with respect to a working set of two coordinates,
chosen by an almost cyclic strategy (note that two is the smallest number of variables that can be updated to maintain feasibility).
A remarkable feature of AC2CD is that the working set selection rule does not use first-order information,
allowing us not to compute the whole gradient of the objective function during the algorithm.

To describe the proposed method, we have to distinguish between \textit{outer iterations}, indicated by the integer $k$,
and \textit{inner iterations}, indicated by the pair of integers $(k,i)$, where $k = 0,1,\ldots$ and $i = 1,\ldots,n$.
Each outer iteration $k$ starts with a feasible point, denoted by $x^k$, and has $n$ inner iterations $(k,1),\ldots,(k,n)$.
Each inner iteration $(k,i)$ starts with a feasible point, denoted by $z^{k,i}$, and produces the successive (feasible) $z^{k,i+1}$
by performing a minimization step with respect to a suitably chosen pair of coordinates.
Outer and inner iterations are linked by the following relation:
\[
z^{k,1} = x^k \quad \text{and} \quad x^{k+1} = z^{k,n+1}, \quad \forall \, k \ge 0.
\]
Namely, each cycle of inner iterations $(k,1),\ldots,(k,n)$ starts from $x^k$ and returns the successive $x^{k+1}$.

To be more specific, given any $x^k$ produced by the algorithm, first we choose
a variable index $j(k) \in \{1,\ldots,n\}$ (as to be described later).
Then, in the cycle of inner iterations $(k,1),\ldots,(k,n)$, we adopt the following rule to choose the working set:
one variable index is selected in a cyclic manner, following an arbitrary order with no repetition,
while the second variable index remains fixed and equal to $j(k)$.
Since only one variable index is selected in a cyclic manner, we name this working set selection rule \textit{almost cyclic}.

From now on, for every inner iteration $(k,i)$ we denote by $p^k_i$ the
variable index of the working set that is selected in a cyclic manner.
So, each minimization step is performed with respect to the two coordinates $z^{k,i}_{p^k_i}$ and $z^{k,i}_{j(k)}$.
In this minimization step, we compute the first-order search direction
$d^{k,i} = [\nabla_{j(k)} f(z^{k,i}) - \nabla_{p^k_i} f(z^{k,i})](e_{p^k_i} - e_{j(k)})$.
Clearly, we have $d^{k,i}_h = 0$ for all $h \in \{1,\ldots,n\} \setminus \{p^k_i,j(k)\}$.
Then, we set
\[
z^{k,i+1} = z^{k,i} + \alpha^{k,i} d^{k,i},
\]
where $\alpha^{k,i} \in \R$ is a feasible stepsize (which will be described later on).

Now, we explain how to choose the index $j(k)$ at the beginning of an outer iteration $k$.
Roughly speaking, $x^k_{j(k)}$ must be ``sufficiently far'' from its nearest bound.
Formally, for each index $h \in \{1,\ldots,n\}$, let us define
the operator $D_h \colon \F \to [0,\infty) \cup \{\infty\}$ that takes as input a feasible point $x$ and returns the distance of $x_h$ from its nearest bound:
\begin{equation}\label{Dh}
D_h(x) := \min\{x_h-l_h,u_h-x_h\}.
\end{equation}
So, for a given $x^k$, we choose $j(k)$ as any index that satisfies
\begin{equation}\label{j(k)}
D_{j(k)} (x^k) \ge \tau D^k,
\end{equation}
where $\tau \in (0,1]$ is a fixed parameter and
\begin{equation}\label{Dk}
D^k := \max_{h=1,\ldots,n} D_h(x^k).
\end{equation}
In other words, the distance between $x^k_{j(k)}$ and its nearest bound must be greater than or equal to a certain fraction (equal to $\tau$)
of the maximum distance between each component of $x^k$ and its nearest bound.

In Algorithm~\ref{alg:ac2cd}, we report the scheme of AC2CD.
Hereinafter, we indicate
\begin{equation}\label{g_d_def}
g^{k,i} := \nabla_{j(k)} f(z^{k,i}) - \nabla_{p^k_i} f(z^{k,i}) \qquad \text{and} \qquad
d^{k,i} := g^{k,i} (e_{p^k_i} - e_{j(k)}),
\end{equation}
as they are defined at steps~6 and~7 of Algorithm~\ref{alg:ac2cd}. Note that
\begin{equation}\label{dir_der}
\nabla f(z^{k,i})^T d^{k,i} = -[\nabla_{j(k)} f(z^{k,i}) - \nabla_{p^k_i} f(z^{k,i})]^2 = -(g^{k,i})^2,
\end{equation}
i.e., $d^{k,i}$ is a descent direction at $z^{k,i}$ if $g^{k,i} \ne 0$
(equivalently, every nonzero $d^{k,i}$ is a descent direction at $z^{k,i}$).

%\floatname{algorithm}{\texttt{\hspace*{0.1truecm}Almost Cyclic 2-Coordinate Descent (AC2CD) method}}
%\renewcommand{\thealgorithm}{}
\begin{algorithm}[h!]
\caption{\texttt{\hspace*{0.1truecm}Almost Cyclic 2-Coordinate Descent (AC2CD) method}}
\label{alg:ac2cd}
\begin{algorithmic}
\par\vspace*{0.1cm}
\item[]\hspace*{-0.1truecm}$\,\,\,0$\vspace{0.1truecm}\hspace*{0.1truecm}
    \textbf{Given} $x^0 \in \F$ and $\tau \in (0,1]$
\item[]\hspace*{-0.1truecm}$\,\,\,1$\vspace{0.1truecm}\hspace*{0.1truecm} \textbf{For} $k = 0,1,\ldots$
\item[]\hspace*{-0.1truecm}$\,\,\,2$\vspace{0.1truecm}\hspace*{0.6truecm} Choose a variable index $j(k) \in \{1,\ldots,n\}$ that satisfies~\eqref{j(k)}
\item[]\hspace*{-0.1truecm}$\,\,\,3$\vspace{0.1truecm}\hspace*{0.6truecm} Choose a permutation $\{p^k_1,\ldots,p^k_n\}$ of $\{1,\ldots,n\}$
\item[]\hspace*{-0.1truecm}$\,\,\,4$\vspace{0.1truecm}\hspace*{0.6truecm} Set $z^{k,1} = x^k$
\item[]\hspace*{-0.1truecm}$\,\,\,5$\vspace{0.1truecm}\hspace*{0.6truecm} \textbf{For} $i = 1,\ldots,n$
\item[]\hspace*{-0.1truecm}$\,\,\,6$\vspace{0.1truecm}\hspace*{1.1truecm} Let $g^{k,i} = \nabla_{j(k)} f(z^{k,i}) - \nabla_{p^k_i} f(z^{k,i})$
\item[]\hspace*{-0.1truecm}$\,\,\,7$\vspace{0.1truecm}\hspace*{1.1truecm} Compute the search direction $d^{k,i} = g^{k,i} (e_{p^k_i} - e_{j(k)})$
\item[]\hspace*{-0.1truecm}$\,\,\,8$\vspace{0.1truecm}\hspace*{1.1truecm} Compute a feasible stepsize $\alpha^{k,i}$ and set $z^{k,i+1} = z^{k,i} + \alpha^{k,i} d^{k,i}$
\item[]\hspace*{-0.1truecm}$\,\,\,9$\vspace{0.1truecm}\hspace*{0.6truecm} \textbf{End for}
\item[]\hspace*{-0.1truecm}$10$\vspace{0.1truecm}\hspace*{0.6truecm} Set $x^{k+1} = z^{k,n+1}$
\item[]\hspace*{-0.1truecm}$11$\vspace{0.1truecm}\hspace*{0.1truecm} \textbf{End for}
%\par\vspace*{0.1cm}
\end{algorithmic}
\end{algorithm}

\begin{remark}\label{rem:inner_it}
For the sake of simplicity, within each outer iteration $k$ of AC2CD we consider $n$ inner iterations,
but they actually are $n-1$, since no pair of coordinates is updated when $p^k_i = j(k)$.
\end{remark}

Now, let us focus on the computation of the stepsize $\alpha^{k,i}$ (step~8 of Algorithm~\ref{alg:ac2cd}).
First, for any inner iterate $z^{k,i}$ we define $\bar \alpha^{k,i}$ as the largest feasible stepsize along the direction $d^{k,i}$.
Since the equality constraint is clearly satisfied by the choice of $d^{k,i}$, by simple calculations we obtain
\begin{equation}\label{max_stepsize_def}
\bar \alpha^{k,i} =
\begin{cases}
(1/g^{k,i}) \, \min \{u_{p^k_i}-z^{k,i}_{p^k_i},z^{k,i}_{j(k)}-l_{j(k)}\}, \quad       & \text{if } g^{k,i} > 0, \\
(1/\abs{g^{k,i}}) \, \min \{z^{k,i}_{p^k_i}-l_{p^k_i},u_{j(k)}-z^{k,i}_{j(k)}\}, \quad & \text{if } g^{k,i} < 0, \\
0,                                                                                     & \text{if } g^{k,i} = 0.
\end{cases}
\end{equation}
In particular, the choice to define $\bar \alpha^{k,i} = 0$ when $g^{k,i} = 0$ is
for convenience of exposition (and without loss of generality),
since it implies that $\bar \alpha^{k,i} = 0$ if $d^{k,i} = 0$ (equivalently, $d^{k,i} \ne 0$ if $\bar \alpha^{k,i} > 0$).

A general rule for the computation of the stepsize $\alpha^{k,i}$ is stated in the following Stepsize Condition~\ref{sc} (SC~\ref{sc}).
As to be shown, this rule can be easily satisfied in practice by different line search strategies
and guarantees global convergence to stationary points under an appropriate assumption on the level set.
%\afterpage{
\begin{framed}
\begin{s_c}\label{sc}
It must hold that
\begin{enumerate}[label=(\roman*), leftmargin=*]
\item $f(z^{k,i+i}) \le f(z^{k,i})$, with $z^{k,i+1} \in \F$, for every inner iteration $(k,i)$; \label{sc_ls}
\item if $\{f(x^k)\}$ converges, then $\displaystyle{\lim_{k \to \infty} \norm{z^{k,i+1}-z^{k,i}} = 0}$, $i = 1,\ldots,n$; \label{sc_lim_dist_inner_iter}
\item for every pair of indices $\hat \imath, \hat \jmath \in \{1,\ldots,n\}$ and every convergent subsequence $\{z^{k,\hat \imath}\}_{K \subseteq \{0,1,\ldots\}}$
    such that $p^k_{\hat \imath}$ is constant for all $k \in K$ and $j(k) = \hat \jmath$ for all $k \in K$,
    if a real number $\xi >0$ exists such that $\displaystyle{\liminf_{\substack{k \to \infty \\ k \in K}} \bar \alpha^{k, \hat \imath} = \xi}$,
    then $\displaystyle{\lim_{\substack{k \to \infty \\ k \in K}} \nabla f(z^{k,\hat \imath})^T d^{k,\hat \imath} = 0}$. \label{sc_lim_gd}
\end{enumerate}
\end{s_c}
\end{framed}
%}
Let us spend a few words on the meaning of SC~\ref{sc}.
Point~\ref{sc_ls} requires that every inner iterate is feasible and does not increase the objective value.
Point~\ref{sc_lim_dist_inner_iter} is a condition usually needed for convergence of block decomposition methods, requiring that the distance of two successive
inner iterates goes to zero.
Finally, point~\ref{sc_lim_gd} requires that the directional derivatives converge to zero over appropriate subsequences
when, for sufficiently large $k$, the largest feasible stepsizes are bounded from below by a positive constant over the same subsequences.

Let us conclude this section by stating the following lemma, which will be useful in the sequel.
It shows that, if points~\ref{sc_ls}--\ref{sc_lim_dist_inner_iter} of SC~\ref{sc} are satisfied,
then every limit point of $\{x^k\}$ is a limit point of $\{z^{k,i}\}$ for every fixed $i = 1,\ldots,n$.

\begin{lemma}\label{lemma:lim_point_z_k_i}
Let $\{x^k\}$ be a sequence of points produced by AC2CD that satisfies points~\ref{sc_ls}--\ref{sc_lim_dist_inner_iter} of SC~\ref{sc},
and let $\{x^k\}_{K \subseteq \{0,1,\ldots\}}$ be a subsequence converging to $\bar x$. Then,
\[
\lim_{\substack{k \to \infty \\ k \in K}} z^{k,i} = \bar x, \quad i = 1,\ldots,n.
\]
\end{lemma}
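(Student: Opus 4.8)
The plan is to argue by induction on $i$, anchored at the identity $z^{k,1}=x^k$ and driven by point~\ref{sc_lim_dist_inner_iter} of SC~\ref{sc}, which controls the gap between consecutive inner iterates. The only preparatory work is to check that the hypothesis under which point~\ref{sc_lim_dist_inner_iter} is stated—that $\{f(x^k)\}$ converges—is genuinely available here, since it is not given directly but must be extracted from the convergent subsequence $\{x^k\}_K$.

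First I would establish the convergence of $\{f(x^k)\}$. Point~\ref{sc_ls} of SC~\ref{sc} makes $f$ non-increasing across the inner iterations of each outer iteration, so $f(x^{k+1}) = f(z^{k,n+1}) \le f(z^{k,n}) \le \cdots \le f(z^{k,1}) = f(x^k)$; hence the whole sequence $\{f(x^k)\}$ is monotonically non-increasing. On the other hand, $f$ is continuous and $x^k \to \bar x$ along $K$, so $\{f(x^k)\}_K$ converges to $f(\bar x)$. A monotone sequence admitting a subsequence that converges to a finite limit must itself converge to that same limit, and therefore $\{f(x^k)\}$ converges (to $f(\bar x)$). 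This activates point~\ref{sc_lim_dist_inner_iter} of SC~\ref{sc}, giving $\lim_{k\to\infty}\norm{z^{k,i+1}-z^{k,i}} = 0$ for every $i=1,\ldots,n$, where the limit is over all $k$ and hence in particular over $k \in K$.

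With this in place the induction is immediate. For $i=1$ the claim reduces to $z^{k,1}=x^k\to\bar x$ along $K$, which holds by assumption. Assuming $z^{k,i}\to\bar x$ along $K$, the triangle inequality together with $\norm{z^{k,i+1}-z^{k,i}}\to 0$ yields $z^{k,i+1}\to\bar x$ along $K$; this advances the induction and covers all $i=1,\ldots,n$, which is exactly the assertion of the lemma.

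The step I expect to require the most care is the convergence of $\{f(x^k)\}$: one cannot simply assume it, but must deduce it by pairing the monotonicity granted by point~\ref{sc_ls} with the continuity of $f$ along the subsequence $\{x^k\}_K$. Once that is secured, the remainder is routine, and in fact the very same argument would also give $z^{k,n+1}=x^{k+1}\to\bar x$ along $K$, although this lies beyond what the statement requests.
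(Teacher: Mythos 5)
Your proof is correct and follows essentially the same route as the paper's: both deduce convergence of $\{f(x^k)\}$ from the monotonicity given by point~\ref{sc_ls} combined with continuity of $f$ along the convergent subsequence, then invoke point~\ref{sc_lim_dist_inner_iter} to make consecutive inner-iterate gaps vanish, and finish with the triangle inequality. Your induction on $i$ is merely a repackaging of the paper's telescoping bound $\norm{z^{k,i}-x^k} \le \sum_{t=1}^{i-1}\norm{z^{k,t+1}-z^{k,t}}$, so there is no substantive difference.
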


\begin{proof}
For $i = 1$ the result is trivial, since $z^{k,1} = x^k$ for all $k$.
For any $i \in \{2,\ldots,n\}$, we have $z^{k,i} - z^{k,1} = \sum_{t=1}^{i-1} z^{k,t+1} - z^{k,t}$, and then
\begin{equation}\label{z_k_i-x_k}
\norm{z^{k,i} - x^k} = \norm{z^{k,i} - z^{k,1}} \le \sum_{t=1}^{i-1} \norm{z^{k,t+1} - z^{k,t}}.
\end{equation}
By continuity of $f$, we have $\{f(x^k)\}_K \to f(\bar x)$.
By point~\ref{sc_ls} of SC~\ref{sc}, we also have $f(x^{k+1}) = f(z^{k,n+1}) \le f(z^{k,n}) \le \ldots \le f(z^{k,1}) = f(x^k)$ for all $k \ge 0$,
and then, $\displaystyle{\lim_{k \to \infty} f(x^k) = f(\bar x)}$.
Therefore, using point~\ref{sc_lim_dist_inner_iter} of SC~\ref{sc}, we can write $\displaystyle{\lim_{k \to \infty}\norm{z^{k,t+1} - z^{k,t}}} = 0$
for all $t = 1,\ldots,n$. Combining this relation with~\eqref{z_k_i-x_k}, we get
\begin{equation}\label{lim_point_z}
\lim_{k \to \infty} \norm{z^{k,i} - x^k} = 0.
\end{equation}
Moreover, $\norm{z^{k,i} - \bar x} = \norm{z^{k,i} - x^k + x^k - \bar x} \le \norm{z^{k,i} - x^k} +  \norm{x^k - \bar x}$.
Then, the result is obtained by combining this inequality with~\eqref{lim_point_z} and the fact that $\{x^k\}_K \to \bar x$.
\end{proof}

\subsection{Convergence to stationary points}\label{subsec:conv}
In this subsection, we are concerned with the convergence analysis of AC2CD.

For a given starting point $x^0$, let us first define the level set
\[
\LS := \{x \in \F \colon f(x) \le f(x^0)\}.
\]
To prove global convergence of AC2CD to stationary points, we need the following assumption,
which requires that every point of $\LS$ has at least one component strictly between the lower and the upper bound.
\begin{assumption}\label{assumpt:l0_int_point}
$\forall \, x \in \LS, \, \exists \, i \in \{1,\ldots,n\} \colon x_i \in (l_i,u_i)$.
\end{assumption}

Let us discuss the role played by this assumption, when combined with SC~\ref{sc}, in the convergence analysis of AC2CD.
First, it guarantees that, for every outer iteration $k \ge 0$,
at least one pair of coordinates can be updated if and only if $x^k$ is non-stationary.
Indeed, every $x^k$ remains in $\LS$ by point~\ref{sc_ls} of SC~\ref{sc} and, by the rule used to choose the index $j(k)$,
if Assumption~\ref{assumpt:l0_int_point} holds we have
\[
l_{j(k)} < x^k_{j(k)} < u_{j(k)}, \quad \forall \, k \ge 0.
\]
So, from the stationarity conditions~\eqref{stat}, it is straightforward to verify that a variable index $p^k_i$ exists such that
$\nabla f(z^{k,i})^T d^{k,i} < 0$ with $\bar \alpha^{k,i} > 0$ if and only if $x^k$ is non-stationary,
that is, at least one pair of coordinates can be updated during the inner iterations if and only if $x^k$ is non-stationary.

Second, as to be shown in the proof of Theorem~\ref{th:conv}, if Assumption~\ref{assumpt:l0_int_point} holds
we can prove that $\{\nabla_{j(k)} f(x^k)\}$ converges, over certain subsequences, to the KKT multiplier $\lambda^*$
defined as in~\eqref{stat}. So, for sufficiently large $k$, we can measure the stationarity violation for each coordinate $x^k_h$
by $\nabla_h f(x^k) - \nabla_{j(k)} f(x^k)$ and guarantee that, at the limit, each coordinate satisfies~\eqref{stat}.

Let us also remark that Assumption~\ref{assumpt:l0_int_point} is automatically satisfied in many cases,
for example when $\F$ is the unit simplex, or when at least one variable has no bounds, that is,
if an index $i \in \{1,\ldots,n\}$ exists such that $l_i = -\infty$ and $u_i = \infty$.
Further, in~\cite{lin:2009}, where the same assumption is used, it is shown that, for the Support Vector Machine training problem,
this assumption is satisfied if the smallest eigenvalue of $\nabla^2 f$
is sufficiently large and the starting point $x^0$ is such that $f(x^0)<0$ (see Appendix~B in~\cite{lin:2009} for more details).

Now, we are ready to show that, under Assumption~\ref{assumpt:l0_int_point}, AC2CD converges to stationary points.
\begin{theorem}\label{th:conv}
Let Assumption~\ref{assumpt:l0_int_point} hold and let $\{x^k\}$ be a sequence of points produced by AC2CD that satisfies SC~\ref{sc}.
Then, every limit point of $\{x^k\}$ is stationary for problem~\eqref{prob}.
\end{theorem}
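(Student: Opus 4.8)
The plan is to fix a limit point $\bar x$ of $\{x^k\}$ together with a subsequence $\{x^k\}_{K} \to \bar x$, and to verify that the stationarity conditions~\eqref{stat} hold at $\bar x$. First I would observe that $\{f(x^k)\}$ is non-increasing by point~\ref{sc_ls} of SC~\ref{sc} and, by continuity of $f$, converges to $f(\bar x)$; hence point~\ref{sc_lim_dist_inner_iter} applies and, together with Lemma~\ref{lemma:lim_point_z_k_i}, yields $z^{k,i} \to \bar x$ along $K$ for every fixed $i \in \{1,\ldots,n\}$. Since $f(\bar x) \le f(x^0)$, we also have $\bar x \in \LS$.

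Next I would exploit Assumption~\ref{assumpt:l0_int_point} to control the fixed index $j(k)$. Because $\bar x \in \LS$, some coordinate $m$ satisfies $\bar x_m \in (l_m,u_m)$, so $D_m(x^k) \to D_m(\bar x) > 0$; by~\eqref{Dk} and~\eqref{j(k)} this forces $D_{j(k)}(x^k) \ge \tau D^k \ge \tau D_m(x^k)$ to be bounded below by a positive constant for large $k \in K$. As $j(k)$ ranges over the finite set $\{1,\ldots,n\}$, I would pass to a subsequence $K' \subseteq K$ on which $j(k) = \hat\jmath$ is constant, and conclude $\bar x_{\hat\jmath} \in (l_{\hat\jmath}, u_{\hat\jmath})$. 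Since $\bar x_{\hat\jmath}$ is strictly interior, any multiplier certifying stationarity must equal $\lambda^* := \nabla_{\hat\jmath} f(\bar x)$; thus $\bar x$ is stationary iff~\eqref{stat} holds with this $\lambda^*$, which I take as the candidate multiplier.

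I would then argue by contradiction: if $\bar x$ is not stationary, some coordinate $h$ violates~\eqref{stat} with $\lambda^*$, i.e. writing $\bar g_h := \nabla_{\hat\jmath} f(\bar x) - \nabla_h f(\bar x)$, either $\bar x_h = l_h$ with $\bar g_h > 0$, or $\bar x_h = u_h$ with $\bar g_h < 0$, or $\bar x_h \in (l_h,u_h)$ with $\bar g_h \ne 0$. Using that $h$ appears at some inner position $\hat\imath = (p^k)^{-1}(h) \in \{1,\ldots,n\}$, I pass to a further subsequence $K_h \subseteq K'$ on which $p^k_{\hat\imath} = h$ is constant; along $K_h$ we still have $z^{k,\hat\imath} \to \bar x$, hence $g^{k,\hat\imath} \to \bar g_h$ by continuity of $\nabla f$. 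The key computation is to show that the largest feasible stepsize $\bar\alpha^{k,\hat\imath}$ from~\eqref{max_stepsize_def} is bounded below by a positive constant along $K_h$: the term coming from $\hat\jmath$ is bounded below because $\bar x_{\hat\jmath}$ is strictly interior, while in each violating case the sign of $\bar g_h$ makes the term coming from $h$ the non-binding one (e.g. if $\bar x_h = l_h$ and $\bar g_h > 0$ then $g^{k,\hat\imath} > 0$ eventually and $u_h - z^{k,\hat\imath}_h \to u_h - l_h > 0$), and $\abs{g^{k,\hat\imath}}$ stays bounded away from $0$. Passing to a subsequence on which $\bar\alpha^{k,\hat\imath}$ converges to a positive real $\xi$, point~\ref{sc_lim_gd} of SC~\ref{sc} yields $\nabla f(z^{k,\hat\imath})^T d^{k,\hat\imath} \to 0$, which by~\eqref{dir_der} means $(g^{k,\hat\imath})^2 \to 0$ and hence $\bar g_h = 0$, contradicting the violation. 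Repeating the argument over all $h$ establishes~\eqref{stat}.

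The main obstacle is precisely this positive lower bound on $\bar\alpha^{k,\hat\imath}$, as it is the hypothesis that unlocks point~\ref{sc_lim_gd}; here Assumption~\ref{assumpt:l0_int_point} does the essential work, since without the interior coordinate $\hat\jmath$ the feasible room at each step could collapse and SC~\ref{sc} would give nothing. A delicate subcase arises when the relevant bounds are infinite (e.g. $u_h = \infty$ and $l_{\hat\jmath} = -\infty$), so that $\bar\alpha^{k,\hat\imath}$ may be unbounded and the finite-$\xi$ hypothesis of point~\ref{sc_lim_gd} does not directly apply; I would handle this either by restricting to a subsequence along which $\bar\alpha^{k,\hat\imath}$ has a finite positive limit, or by treating the unbounded feasible direction separately using that $\norm{z^{k,\hat\imath+1}-z^{k,\hat\imath}} = \sqrt{2}\, \abs{g^{k,\hat\imath}}\, \alpha^{k,\hat\imath} \to 0$ from point~\ref{sc_lim_dist_inner_iter}.
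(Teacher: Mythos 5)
Your proposal is correct and follows essentially the same route as the paper's own proof: Lemma~\ref{lemma:lim_point_z_k_i} carries the limit $\bar x$ to the inner iterates, Assumption~\ref{assumpt:l0_int_point} together with the selection rule~\eqref{j(k)} yields a subsequence with constant $j(k)=\hat\jmath$ and $\bar x_{\hat\jmath}\in(l_{\hat\jmath},u_{\hat\jmath})$, the candidate multiplier is $\lambda^*=\nabla_{\hat\jmath}f(\bar x)$, and the contradiction is reached by pigeonholing the violating coordinate into a fixed inner position, bounding $\bar\alpha^{k,\hat\imath}$ below by a positive constant, and invoking point~\ref{sc_lim_gd} of SC~\ref{sc} with~\eqref{dir_der} to force $\bar g_h=0$. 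The one caveat is your closing ``delicate subcase'': when $\bar\alpha^{k,\hat\imath}\to\infty$ your first remedy (a subsequence with finite positive limit) is unavailable and your second remedy proves nothing, since $\alpha^{k,\hat\imath}\to 0$ is perfectly compatible with SC~\ref{sc} points~\ref{sc_ls}--\ref{sc_lim_dist_inner_iter} and yields no contradiction on its own --- but this is a looseness in the literal phrasing of point~\ref{sc_lim_gd} that the paper's proof also passes over silently; with the intended reading (the conclusion holds whenever $\liminf \bar\alpha^{k,\hat\imath}\ge\xi>0$, which is exactly what the verifications in Propositions~\ref{prop:armijo_stepsize}--\ref{prop:exact_stepsize} establish), your main-line argument applies uniformly and no separate subcase is needed.
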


\begin{proof}
Let $x^*$ be a limit point of $\{x^k\}$ and let $\{x^k\}_{K \subseteq \{0,1,\ldots\}}$ be a subsequence converging to $x^*$.
From the instructions of the algorithm, $x^*$ is a feasible point. Moreover, from Lemma~\ref{lemma:lim_point_z_k_i} we can write
\begin{equation}\label{lim_z_k_i}
\lim_{\substack{k \to \infty \\ k \in K}} z^{k,i} = x^*, \quad i = 1,\ldots,n.
\end{equation}
By continuity of $f$, we have that $\{f(x^k)\}_K$ converges to $f(x^*)$.
Since $\{f(x^k)\}$ is monotonically non-increasing (by point~\ref{sc_ls} of SC~\ref{sc})
we have that $\{f(x^k)\}$ converges to $f(x^*)$.
Therefore, by point~\ref{sc_lim_dist_inner_iter} of SC~\ref{sc} it follows that
\begin{equation}\label{lim_dist_inner_iter}
\lim_{k \to \infty} \norm{z^{k,i+1}-z^{k,i}} = 0, \quad i = 1,\ldots,n.
\end{equation}

Using the fact that the set of indices $\{1,\ldots,n\}$ is finite, there exist an index $\hat \jmath \in \{1,\ldots,n\}$
and a further infinite subsequence, that we still denote by $\{x^k\}_K$ without loss of generality, such that
\[
\lim_{\substack{k \to \infty \\ k \in K}} x^k = x^* \quad \text{and} \quad j(k) = \hat \jmath, \qquad \forall \, k \in K.
\]

We first want to show that a real number $\rho > 0$ exists such that
\begin{equation}\label{D_j_k}
\min\{z^{k,i}_{\hat \jmath}-l_{\hat \jmath},u_{\hat \jmath}-z^{k,i}_{\hat \jmath}\} \ge \rho, \quad i = 1,\ldots,n,
\quad \forall \, \text{sufficiently large } k \in K.
\end{equation}
To this extent, by Assumption~\ref{assumpt:l0_int_point} we have that an index $\bar h \in \{1,\ldots,n\}$ exists such that $x^*_{\bar h} \in (l_{\bar h},u_{\bar h})$.
So, using the operator $D_h$ defined as in~\eqref{Dh}, a positive real number $\rho$ exists such that $D_{\bar h}(x^*) \ge (4/\tau) \rho$,
where $\tau \in (0,1]$ is the parameter defined at step~0 of Algorithm~\ref{alg:ac2cd}, used to compute $j(k)$.
Since $\{x^k\}_K \to x^*$, by continuity of $D_h$ we have that $D_{\bar h}(x^k) \ge (2/\tau) \rho$ for all sufficiently large $k \in K$.
Using the definition of $D^k$ given in~\eqref{Dk}, we obtain
\[
D^k \ge (2/\tau) \rho, \quad \forall \, \text{sufficiently large } k \in K.
\]
By the rule used to choose the index $j(k)$, we can write
\[
D_{\hat \jmath}(z^{k,1}) = D_{\hat \jmath}(x^k) \ge \tau D^k \ge 2 \rho, \quad \forall \, \text{sufficiently large } k \in K.
\]
The above relation, combined with~\eqref{lim_dist_inner_iter}, implies that
$D_{\hat \jmath}(z^{k,i}) \ge \rho$, $i = 1,\ldots,n$, for all sufficiently large $k \in K$.
Equivalently,~\eqref{D_j_k} holds.

Now, we want to show that the stationarity conditions~\eqref{stat} are satisfied at $x^*$ with
\begin{equation}\label{lambda_star}
\lambda^* = \nabla_{\hat \jmath} f(x^*).
\end{equation}

Reasoning by contradiction, assume that this is not true.
Then, there exists an index $\hat t \in \{1,\ldots,n\} \setminus \{\hat \jmath\}$ that violates~\eqref{stat}.
Using again the fact that the set of indices $\{1,\ldots,n\}$ is finite,
there also exist an index $\hat \imath \in \{1,\ldots,n\}$
and a further infinite subsequence, that we still denote by $\{x^k\}_K$ without loss of generality,
such that $p^k_{\hat \imath}$ is constant and equal to $\hat t$ for all $k \in K$.
Since also $j(k)$ is constant (and equal to $\hat \jmath$) for all $k \in K$, we thus obtain
\[
p^k_{\hat \imath} = \hat t \quad \text{and} \quad j(k) = \hat \jmath, \qquad \forall \, k \in K.
\]
By~\eqref{lim_z_k_i} and the continuity of $\nabla f$, it follows that $\{\nabla f(z^{k,i})\}_K$ is bounded for all $i = 1,\ldots,n$.
So, a real number $T$ exists such that
\[
\norm{\nabla f(z^{k,i})} \le T, \quad i = 1,\ldots,n, \quad \forall \, k \in K.
\]
Therefore,
\begin{equation}\label{g_k_i_bounded}
\abs{g^{k, \hat \imath}} = \abs{\nabla_{\hat \jmath} f(z^{k, \hat \imath}) - \nabla_{\hat t} f(z^{k, \hat \imath})} \le 2 T, \quad \forall \, k \in K.
\end{equation}
Since we have assumed $\hat t$ to violate~\eqref{stat} with $\lambda^*$ defined as in~\eqref{lambda_star}, one of the following three cases must hold.
\begin{enumerate}[label=(\roman*)]
\item $x^*_{\hat t} \in (l_{\hat t}, u_{\hat t})$ and $\abs{\nabla_{\hat \jmath} f(x^*)-\nabla_{\hat t} f(x^*)} > 0$.
    Taking into account~\eqref{lim_z_k_i}, a real number $\zeta > 0$ exists such that
    \begin{align}
    \min\{z^{k,\hat \imath}_{\hat t} - l_{\hat t}, u_{\hat t} - z^{k,\hat \imath}_{\hat t}\}
    \ge \zeta, \quad & \forall \, \text{sufficiently large } k \in K, \label{proof_contr1} \\
    \abs{g^{k, \hat \imath}} = \abs{\nabla_{\hat \jmath} f(z^{k, \hat \imath}) - \nabla_{\hat t} f(z^{k, \hat \imath})}
    \ge \zeta, \quad & \forall \, \text{sufficiently large } k \in K. \label{proof_contr2}
    \end{align}
    From~\eqref{D_j_k}, \eqref{g_k_i_bounded}, \eqref{proof_contr1}, \eqref{proof_contr2}
    and the definition of $\bar \alpha^{k,i}$ given in~\eqref{max_stepsize_def}, we obtain
    \[
    \bar \alpha^{k,\hat \imath} \ge \min \Bigl\{\frac{\zeta}{2T}, \frac{\rho}{2T}\Bigl\} > 0, \quad \forall \, \text{sufficiently large } k \in K.
    \]
    Therefore, by point~\ref{sc_lim_gd} of SC~\ref{sc} we get
    \[
    0 = \lim_{\substack{k \to \infty \\ k \in K}} \nabla f(z^{k,\hat \imath})^T d^{k,\hat \imath}  =
    \lim_{\substack{k \to \infty \\ k \in K}}
    -\bigl[\nabla_{\hat \jmath} f(z^{k,\hat \imath})-\nabla_{\hat t} f(z^{k,\hat \imath})\bigr]^2,
    \]
    where the second equality follows from~\eqref{dir_der}. We thus obtain a contradiction with~\eqref{proof_contr2}.
\item $x^*_{\hat t} = l_{\hat t}$ and $\nabla_{\hat t} f(x^*) < \nabla_{\hat \jmath} f(x^*)$.
    Taking into account~\eqref{lim_z_k_i}, we have
    \begin{equation}\label{proof_contr3}
    u_{\hat t} - z^{k,\hat \imath}_{\hat t} \ge \frac{u_{\hat t}-l_{\hat t}}2, \quad \forall \, \text{sufficiently large } k \in K,
    \end{equation}
    and a real number $\zeta > 0$ exists such that
    \begin{equation}\label{proof_contr4}
    g^{k, \hat \imath} = \nabla_{\hat \jmath} f(z^{k, \hat \imath}) - \nabla_{\hat t} f(z^{k, \hat \imath})
    \ge \zeta, \quad \forall \, \text{sufficiently large } k \in K.
    \end{equation}
    From~\eqref{D_j_k}, \eqref{g_k_i_bounded}, \eqref{proof_contr3}, \eqref{proof_contr4}
    and the definition of $\bar \alpha^{k,i}$ given in~\eqref{max_stepsize_def}, we obtain
    \[
    \bar \alpha^{k,\hat \imath} \ge \min \Bigl\{\frac{u_{\hat t}-l_{\hat t}}{4T}, \frac{\rho}{2T}\Bigl\} > 0,
    \quad \forall \, \text{sufficiently large } k \in K.
    \]
    Therefore, by point~\ref{sc_lim_gd} of SC~\ref{sc} we get
    \[
    0 = \lim_{\substack{k \to \infty \\ k \in K}} \nabla f(z^{k,\hat \imath})^T d^{k,\hat \imath}  =
    \lim_{\substack{k \to \infty \\ k \in K}} -\bigl[\nabla_{\hat \jmath} f(z^{k,\hat \imath})-\nabla_{\hat t} f(z^{k,\hat \imath})\bigr]^2,
    \]
    where the second equality follows from~\eqref{dir_der}. We thus obtain a contradiction with~\eqref{proof_contr4}.
\item $x^*_{\hat t} = u_{\hat t}$ and $\nabla_{\hat t} f(x^*) > \nabla_{\hat \jmath} f(x^*)$.
    This is a verbatim repetition of the previous case, which again leads to a contradiction.
\end{enumerate}
We can thus conclude that $x^*$ is a stationary point.
\end{proof}

\section{Computation of the stepsize}\label{sec:stepsize}
In this section, we describe some practical ways to compute the stepsize in order to satisfy SC~\ref{sc}.
We consider different classes of objective functions: general non-convex functions,
those with Lipschitz continuous gradient and strictly convex functions.

\subsection{General non-convex objective functions}\label{subsec:armijo}
When the objective function is non-convex, a common way to compute the stepsize is using an inexact line search.
Here, we consider the Armijo line search, which is now described.
Given any inner iterate $z^{k,i}$ and the direction $d^{k,i}$, first we choose a trial feasible stepsize $\Delta^{k,i}$
and then we obtain $\alpha^{k,i}$ by a backtracking procedure. In particular, we set
\begin{equation}\label{armijo_stepsize}
\alpha^{k,i} = (\delta)^c \, \Delta^{k,i},
\end{equation}
where $c$ is the smallest nonnegative integer such that
\begin{equation}\label{armijo_condition}
f (z^{k,i} + \Delta^{k,i} (\delta)^c d^{k,i}) \le f (z^{k,i}) + \gamma \Delta^{k,i} (\delta)^{c\,} \nabla f(z^{k,i})^T d^{k,i}
\end{equation}
and $\delta \in (0,1)$, $\gamma \in (0,1)$ are fixed parameters.

For what concerns the choice of $\Delta^{k,i}$, first it must be feasible, that is, $\Delta^{k,i} \le \bar \alpha^{k,i}$.
Moreover, as to be shown in the proof of the following proposition, $\Delta^{k,i}$ must be bounded from above by a finite positive constant to satisfy point~\ref{sc_lim_dist_inner_iter} of SC~\ref{sc}.
Namely, we require that $\Delta^{k,i} \le A^{k,i}$, where $A^{k,i}$ is any scalar satisfying
$0 < A_l \le A^{k,i} \le A_u < \infty$, with $A_l$ and $A_u$ being fixed parameters.
In conclusion, in the Armijo line search we set $\Delta^{k,i} = \min\{\bar \alpha^{k,i}, A^{k,i}\}$.
In the next proposition, we show that a stepsize computed in this way satisfies SC~\ref{sc}.

\begin{proposition}\label{prop:armijo_stepsize}
Let $\delta \in (0,1)$, $\gamma \in (0,1)$ and $0 < A_l \le A_u < \infty$.
Then, SC~\ref{sc} is satisfied by computing, at every inner iteration $(k,i)$,
the stepsize $\alpha^{k,i}$ as in~\eqref{armijo_stepsize},
where $c$ is the smallest nonnegative integer such that~\eqref{armijo_condition} holds,
$\Delta^{k,i} = \min\{\bar \alpha^{k,i}, A^{k,i}\}$ and $A^{k,i} \in [A_l,A_u]$.
\end{proposition}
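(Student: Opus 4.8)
The plan is to verify the three conditions of SC~\ref{sc} in turn, relying on the structure of the Armijo backtracking scheme and the explicit bounds placed on the trial stepsize $\Delta^{k,i} = \min\{\bar\alpha^{k,i}, A^{k,i}\}$.

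For point~\ref{sc_ls}, I would first note that feasibility is automatic: since $\Delta^{k,i} \le \bar\alpha^{k,i}$ and the accepted stepsize $\alpha^{k,i} = (\delta)^c \Delta^{k,i} \le \Delta^{k,i} \le \bar\alpha^{k,i}$, each $z^{k,i+1}$ lies on the feasible segment. The monotone decrease $f(z^{k,i+1}) \le f(z^{k,i})$ follows directly from the Armijo acceptance condition~\eqref{armijo_condition} together with~\eqref{dir_der}, which gives $\nabla f(z^{k,i})^T d^{k,i} = -(g^{k,i})^2 \le 0$, so the right-hand side of~\eqref{armijo_condition} is at most $f(z^{k,i})$. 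The one point needing care is well-definedness of the backtracking: I must argue that a finite integer $c$ satisfying~\eqref{armijo_condition} always exists. When $d^{k,i} = 0$ this is trivial; when $d^{k,i} \ne 0$ it is a descent direction, and the standard Armijo argument (using $\gamma \in (0,1)$ and differentiability of $f$) guarantees termination.

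For point~\ref{sc_lim_gd}, I would follow the classical Armijo limiting argument. Assume a convergent subsequence $\{z^{k,\hat\imath}\}_K$ with $p^k_{\hat\imath}$ and $j(k)$ fixed and $\liminf_{k\in K} \bar\alpha^{k,\hat\imath} = \xi > 0$. Since $\bar\alpha^{k,\hat\imath}$ is bounded below by a positive constant for large $k \in K$, and $\Delta^{k,\hat\imath} = \min\{\bar\alpha^{k,\hat\imath}, A^{k,\hat\imath}\}$ with $A^{k,\hat\imath} \ge A_l > 0$, the trial stepsize $\Delta^{k,\hat\imath}$ is also bounded below by a positive constant along $K$. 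The standard split is on whether the full trial step is eventually accepted ($c = 0$ infinitely often) or whether backtracking is always triggered ($c \ge 1$ for large $k$). In the first case, point~\ref{sc_lim_dist_inner_iter} forces $\norm{z^{k,\hat\imath+1} - z^{k,\hat\imath}} = \alpha^{k,\hat\imath}\abs{g^{k,\hat\imath}}\sqrt{2} \to 0$; since $\alpha^{k,\hat\imath} = \Delta^{k,\hat\imath}$ is bounded below, this gives $g^{k,\hat\imath} \to 0$, hence $\nabla f(z^{k,\hat\imath})^T d^{k,\hat\imath} = -(g^{k,\hat\imath})^2 \to 0$. In the second case, the reduced stepsize $\alpha^{k,\hat\imath}/\delta$ failed~\eqref{armijo_condition}; dividing the failed inequality by $\alpha^{k,\hat\imath}/\delta$, applying the mean value theorem to the left side, and passing to the limit (using continuity of $\nabla f$ and boundedness of the direction) yields the standard contradiction with $\gamma < 1$ unless $\nabla f(z^{k,\hat\imath})^T d^{k,\hat\imath} \to 0$.

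Point~\ref{sc_lim_dist_inner_iter} is where I expect the main work, and it is the reason the upper bound $A_u$ on $\Delta^{k,i}$ is imposed. The plan is to assume $\{f(x^k)\}$ converges and show $\norm{z^{k,i+1} - z^{k,i}} \to 0$. Summing the Armijo decrease over one cycle of inner iterations and telescoping gives $f(x^k) - f(x^{k+1}) \ge \gamma \sum_{i=1}^n \Delta^{k,i}(g^{k,i})^2$; since the left side tends to zero, each term $\Delta^{k,i}(g^{k,i})^2 \to 0$. Now $\norm{z^{k,i+1} - z^{k,i}} = \alpha^{k,i}\abs{g^{k,i}}\sqrt{2} \le \Delta^{k,i}\abs{g^{k,i}}\sqrt{2}$, so I must bound this by something governed by $\Delta^{k,i}(g^{k,i})^2$. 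The bound $\Delta^{k,i} \le A_u$ gives $(\Delta^{k,i}\abs{g^{k,i}})^2 = \Delta^{k,i} \cdot \Delta^{k,i}(g^{k,i})^2 \le A_u \cdot \Delta^{k,i}(g^{k,i})^2 \to 0$, which delivers the conclusion. The subtle obstacle here is precisely that without an upper bound on $\Delta^{k,i}$, a large trial stepsize combined with a small $g^{k,i}$ could keep $\Delta^{k,i}\abs{g^{k,i}}$ bounded away from zero while $\Delta^{k,i}(g^{k,i})^2$ still vanishes; the cap $A_u$ rules this out, which explains why it was introduced in the setup.
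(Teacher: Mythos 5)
Your overall route is the paper's: the same three-way verification of SC~\ref{sc}, with feasibility and monotone decrease read off the Armijo condition for point~\ref{sc_ls}, a sufficient-decrease estimate exploiting the cap $A_u$ for point~\ref{sc_lim_dist_inner_iter}, and the classical mean-value-theorem limiting argument for point~\ref{sc_lim_gd}. Two steps need repair, however. In point~\ref{sc_lim_dist_inner_iter}, the telescoped inequality $f(x^k)-f(x^{k+1}) \ge \gamma \sum_{i=1}^n \Delta^{k,i}(g^{k,i})^2$ is false as written: the Armijo condition~\eqref{armijo_condition} certifies a decrease proportional to the \emph{accepted} stepsize $\alpha^{k,i} = (\delta)^c\,\Delta^{k,i}$, which backtracking may drive far below $\Delta^{k,i}$, so only $f(x^k)-f(x^{k+1}) \ge \gamma \sum_{i=1}^n \alpha^{k,i}(g^{k,i})^2$ is available. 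Fortunately your chain survives verbatim with $\alpha^{k,i}$ in place of $\Delta^{k,i}$: one has exactly $\norm{z^{k,i+1}-z^{k,i}} = \sqrt{2}\,\alpha^{k,i}\abs{g^{k,i}}$ (no need to pass through $\Delta^{k,i}$), and $\bigl(\alpha^{k,i}\abs{g^{k,i}}\bigr)^2 \le A_u \, \alpha^{k,i}(g^{k,i})^2 \to 0$ since $\alpha^{k,i} \le \Delta^{k,i} \le A^{k,i} \le A_u$. This corrected computation is precisely the paper's relation $f(z^{k,i+1}) \le f(z^{k,i}) - \frac{\gamma}{2A_u}\norm{z^{k,i+1}-z^{k,i}}^2$, so your diagnosis of why $A_u$ is imposed is right; the slip is only in attributing the guaranteed decrease to the trial stepsize rather than the accepted one.

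In case 2 of point~\ref{sc_lim_gd}, ``passing to the limit'' in the failed Armijo inequality after the mean value theorem requires the intermediate point $\beta^{k,\hat\imath} = z^{k,\hat\imath} + \theta^{k,\hat\imath}(\alpha^{k,\hat\imath}/\delta)\,d^{k,\hat\imath}$ to converge to the same limit $\tilde z$ as $z^{k,\hat\imath}$; continuity of $\nabla f$ and boundedness of $d^{k,\hat\imath}$ alone do not give this --- you need $\alpha^{k,\hat\imath} d^{k,\hat\imath} \to 0$. The paper obtains it by first deriving $\alpha^{k,\hat\imath}\abs{\nabla f(z^{k,\hat\imath})^T d^{k,\hat\imath}} \to 0$ from the monotone convergence of the function values, and then, under the contradiction hypothesis that the directional derivative stays bounded away from zero, concluding $\alpha^{k,\hat\imath} \to 0$. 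Alternatively, you can note that $\alpha^{k,\hat\imath} d^{k,\hat\imath} = z^{k,\hat\imath+1}-z^{k,\hat\imath} \to 0$ by point~\ref{sc_lim_dist_inner_iter}, which applies here because convergence of $\{z^{k,\hat\imath}\}_K$ together with the monotonicity from point~\ref{sc_ls} forces $\{f(x^k)\}$ to converge --- a fact you use implicitly in case 1 and should state. With these two repairs (and the remaining details of your case split, which is a harmless rearrangement of the paper's contradiction argument), the proposal coincides with the paper's proof.
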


\begin{proof}
We first observe that, at every inner iteration $(k,i)$, there exists a nonnegative integer $c$ satisfying~\eqref{armijo_condition},
since, from~\eqref{dir_der}, we have \mbox{$\nabla f(z^{k,i})^T d^{k,i} < 0$} for all $d^{k,i} \ne 0$.

Point~\ref{sc_ls} of SC~\ref{sc} immediately follows from~\eqref{armijo_condition} and the fact that $\alpha^{k,i} \le \Delta^{k,i}$, with $\Delta^{k,i}$ feasible.
Now, we prove point~\ref{sc_lim_dist_inner_iter} of SC~\ref{sc}.
We first show that a real number $\sigma > 0$ exists such that, for all $k \ge 0$, we have
\begin{equation}\label{forcing_funct_armijo}
f(z^{k,i+1}) \le f(z^{k,i}) - \sigma \norm{z^{k,i+1}-z^{k,i}}^2, \quad i = 1,\ldots,n.
\end{equation}
From~\eqref{armijo_stepsize} and~\eqref{armijo_condition}, for all $k \ge 0$ we can write
\begin{equation}\label{arm0}
f(z^{k,i+1}) \le f(z^{k,i}) + \gamma \alpha^{k,i} \nabla f(z^{k,i})^T d^{k,i}, \quad i = 1,\ldots,n.
\end{equation}
Using~\eqref{dir_der}, we obtain
\begin{equation}\label{arm1}
f(z^{k,i+1}) \le f(z^{k,i}) - \gamma \alpha^{k,i} (g^{k,i})^2, \quad i = 1,\ldots,n.
\end{equation}
Moreover, $z^{k,i+1}-z^{k,i} = \alpha^{k,i} g^{k,i} (e_{p^k_i} - e_{j(k)})$, and then, $\norm{z^{k,i+1}-z^{k,i}}^2 = 2 (\alpha^{k,i})^2 (g^{k,i})^2$.
Using this equality in~\eqref{arm1}, and recalling that $\alpha^{k,i} \le A_u < \infty$, we have that
\[
\begin{split}
f(z^{k,i+1}) \le f(z^{k,i}) - \frac{\gamma}{2 A_u} \norm{z^{k,i+1}-z^{k,i}}^2, \quad i = 1,\ldots,n.
\end{split}
\]
Therefore,~\eqref{forcing_funct_armijo} holds with $\sigma = \gamma/(2A_u)$ for all $k \ge 0$.
So, point~\ref{sc_lim_dist_inner_iter} of SC~\ref{sc} is satisfied by combining~\eqref{forcing_funct_armijo} with the fact that
$f(x^{k+1}) = f(z^{k,n+1}) \le f(z^{k,n}) \le \ldots \le f(z^{k,1}) = f(x^k)$ for all $k \ge 0$.

Now, we prove that also point~\ref{sc_lim_gd} of SC~\ref{sc} holds.
Let $\{z^{k,\hat \imath}\}_K$, $\hat \imath$, $\hat \jmath$ and $\bar \alpha^{k,\hat \imath}$
be defined as in point~\ref{sc_lim_gd} of SC~\ref{sc}.
Rearranging the terms in~\eqref{arm0}, we can write
\begin{equation}\label{arm_decr}
f(z^{k,\hat \imath}) - f(z^{k,\hat \imath+1}) \ge \gamma \alpha^{k, \hat \imath} \abs{\nabla f(z^{k,\hat \imath})^T d^{k,\hat \imath}}, \quad \forall \, k \in K.
\end{equation}
Moreover, since $\{z^{k,\hat \imath}\}_K$ converges and $f(x^{k+1}) = f(z^{k,n+1}) \le f(z^{k,n}) \le \ldots \le f(z^{k,1}) = f(x^k)$ for all $k \ge 0$,
by continuity of $f$ we have that $\{f(x^k)\}$ converges, implying that
\begin{equation}\label{lim_f_z_k_i}
\lim_{k \to \infty} [f(z^{k,i+1}) - f(z^{k,i})] = 0, \quad i = 1,\ldots,n.
\end{equation}
Combining~\eqref{arm_decr} and~\eqref{lim_f_z_k_i}, we obtain
\begin{equation}\label{lim_alpha_gd}
\lim_{\substack{k \to \infty \\ k \in K}} \alpha^{k, \hat \imath} \abs{\nabla f(z^{k,\hat \imath})^T d^{k,\hat \imath}} = 0.
\end{equation}
Proceeding by contradiction, we assume that it does not hold that
\begin{equation}\label{lim_gd}
\lim_{\substack{k \to \infty \\ k \in K}} \nabla f(z^{k,\hat \imath})^T d^{k,\hat \imath} = 0.
\end{equation}
Since $\{z^{k,\hat \imath}\}_K$ converges, then it is bounded and, by continuity of $\nabla f$ and the definition of the direction $d^{k,i}$,
also $\bigl\{\nabla f(z^{k,\hat \imath})\bigr\}_K$ and $\{d^{k,\hat \imath}\}_K$ are bounded.
So, if~\eqref{lim_gd} does not hold, there exist further infinite subsequences,
that we still denote by $\{z^{k,\hat \imath}\}_K$, $\bigl\{\nabla f(z^{k,\hat \imath})\bigr\}_K$ and $\{d^{k,\hat \imath}\}_K$
without loss of generality, such that
\begin{equation}\label{lim_z_d}
\lim_{\substack{k \to \infty \\ k \in K}} z^{k, \hat \imath} = \tilde z \in \R^n, \qquad
\lim_{\substack{k \to \infty \\ k \in K}} d^{k, \hat \imath} = \tilde d \in \R^n
\end{equation}
and
\begin{equation}\label{lim_gd_to_eta}
\lim_{\substack{k \to \infty \\ k \in K}} \nabla f(z^{k,\hat \imath})^T d^{k,\hat \imath} = \nabla f(\tilde z)^T \tilde d = -\eta \in \R,
\end{equation}
with $\eta > 0$. From~\eqref{lim_alpha_gd} and~\eqref{lim_gd_to_eta}, we get
\begin{equation}\label{alpha_to_zero}
\lim_{\substack{k \to \infty \\ k \in K}} \alpha^{k,\hat \imath} = 0.
\end{equation}
Since $A^{k,\hat \imath} \ge A_l > 0$ for all $k \ge 0$ and $\bar \alpha^{k,\hat \imath} \ge \xi/2 > 0$
for all sufficiently large $k \in K$, using the definition of $\Delta^{k,\hat \imath}$ we obtain
\[
\Delta^{k,\hat \imath} \ge \min\{\xi/2, A_l\}>0, \quad \forall \, \text{sufficiently large } k \in K.
\]
Consequently, by~\eqref{alpha_to_zero}, an outer iteration $\bar k \in K$ exists such that
\[
\alpha^{k,\hat \imath} < \Delta^{k,\hat \imath}, \quad \forall \, k \ge \bar k, \; k \in K.
\]
The above relation implies that~\eqref{armijo_condition} is satisfied with $c > 0$ for all $k \ge \bar k$, $k \in K$.
Therefore,
\begin{equation}\label{arm_not_sat}
f\biggl(z^{k,\hat \imath} + \frac{\alpha^{k,\hat \imath}}{\delta} d^{k,\hat \imath}\biggr) >
f(z^{k,\hat \imath}) + \gamma \frac{\alpha^{k,\hat \imath}}{\delta} \nabla f(z^{k,\hat \imath})^T d^{k,\hat \imath}, \quad \forall \, k \ge \bar k, \; k \in K.
\end{equation}
By the mean value theorem, we can write
\begin{equation}\label{arm_mean_val_th}
f\biggl(z^{k,\hat \imath} + \frac{\alpha^{k,\hat \imath}}{\delta} d^{k,\hat \imath}\biggr)
= f(z^{k,\hat \imath}) + \frac{\alpha^{k,\hat \imath}}{\delta} \nabla f(\beta^{k,\hat \imath})^T d^{k,\hat \imath},
\end{equation}
where $\displaystyle{\beta^{k,\hat \imath} = z^{k,\hat \imath} + \theta^{k,\hat \imath} \frac{\alpha^{k,\hat \imath}}{\delta} d^{k,\hat \imath}}$
and $\theta^{k,\hat \imath} \in (0,1)$. Using~\eqref{arm_not_sat} and~\eqref{arm_mean_val_th}, we obtain
\begin{equation}\label{arm_contr}
\nabla f(\beta^{k,\hat \imath})^T d^{k,\hat \imath} > \gamma \nabla f(z^{k,\hat \imath})^T d^{k,\hat \imath},
\quad \forall \, k \ge \bar k, \; k \in K.
\end{equation}
Since $\theta^{k,\hat \imath} \in (0,1)$, and taking into account~\eqref{lim_z_d} and~\eqref{alpha_to_zero}, it follows that
$\{\beta^{k,\hat \imath}\}_K \to \tilde z$.
So, passing to the limit in~\eqref{arm_contr}, we have that $\nabla f(\tilde z)^T \tilde d \ge  \gamma \nabla f(\tilde z)^T \tilde d$.
Using~\eqref{lim_gd_to_eta}, we obtain $-\eta \ge -\gamma \eta$,
contradicting the fact that $\eta > 0$ and $\gamma \in (0,1)$. Then, point~\ref{sc_lim_gd} of SC~\ref{sc} holds.
\end{proof}

\subsection{Objective functions with Lipschitz continuous gradient}\label{subsec:lips}
In this subsection, we consider the case where $\nabla f$ is Lipschitz continuous over $\mathcal F$ with constant $L$.
Namely, we assume that
\[
\norm{\nabla f(y) - \nabla f(x)} \le L \norm{y-x}, \quad \forall \, x,y \in \mathcal F.
\]

First, let us recall a result due to Beck~\cite{beck:2014}, which will be useful in the sequel.
\begin{lemma}\label{lemma:beck}
Let us assume that $\nabla f$ is Lipschitz continuous over $\mathcal F$ with constant $L$.
For any point $z \in \mathcal F$ and any pair of indices $i,j \in \{1,\ldots,n\}$, define the function
\[
\phi_{i,j,z}(t) := f\bigl(z+t(e_i-e_j)\bigr), \quad t \in I_{i,j,z},
\]
where $I_{i,j,z}$ is the interval that comprises the feasible stepsizes. Namely,
\begin{equation}\label{feas_interv_lips}
I_{i,j,z} := \{t \in \R \colon z + t (e_i - e_j) \in \F\}.
\end{equation}

Then, every $\dot \phi_{i,j,z}$ is Lipschitz continuous over $I_{i,j,z}$ with constant $L_{i,j} \le 2L$, that is,
\begin{equation}\label{lips_grad}
\abs{\dot \phi_{i,j,z}(t) - \dot \phi_{i,j,z}(s)} \le L_{i,j} \abs{t-s}, \quad \forall \, z \in \mathcal F, \quad \forall \, t,s \in I_{i,j,z}.
\end{equation}
\end{lemma}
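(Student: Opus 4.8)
The plan is to compute $\dot \phi_{i,j,z}$ explicitly via the chain rule and then reduce the claimed Lipschitz estimate to a single application of the Lipschitz continuity of $\nabla f$. Since $f$ is continuously differentiable and the map $t \mapsto z + t(e_i - e_j)$ is affine, $\phi_{i,j,z}$ is itself continuously differentiable on $I_{i,j,z}$, with
\[
\dot \phi_{i,j,z}(t) = \nabla f\bigl(z + t(e_i - e_j)\bigr)^T (e_i - e_j).
\]
Fixing $t,s \in I_{i,j,z}$ and writing $y = z + t(e_i - e_j)$ and $x = z + s(e_i - e_j)$, which both lie in $\F$ by the definition~\eqref{feas_interv_lips} of $I_{i,j,z}$, the difference of derivatives becomes $\dot \phi_{i,j,z}(t) - \dot \phi_{i,j,z}(s) = [\nabla f(y) - \nabla f(x)]^T (e_i - e_j)$.

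Next I would bound this inner product using the Cauchy--Schwarz inequality to get
\[
\abs{\dot \phi_{i,j,z}(t) - \dot \phi_{i,j,z}(s)} \le \norm{\nabla f(y) - \nabla f(x)} \, \norm{e_i - e_j},
\]
and then invoke the Lipschitz continuity of $\nabla f$ over $\F$ together with $y - x = (t-s)(e_i - e_j)$ to obtain $\norm{\nabla f(y) - \nabla f(x)} \le L \abs{t-s} \, \norm{e_i - e_j}$. Combining the two inequalities yields
\[
\abs{\dot \phi_{i,j,z}(t) - \dot \phi_{i,j,z}(s)} \le L \, \norm{e_i - e_j}^2 \, \abs{t - s},
\]
so the claim follows once one observes that $\norm{e_i - e_j}^2 = 2$ for $i \ne j$, giving $L_{i,j} \le 2L$.

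The two powers of $\norm{e_i - e_j}$ --- one arising from Cauchy--Schwarz against the fixed direction, one from measuring $\norm{y-x}$ along that same direction --- are precisely what produces the factor of $2$ in the bound. There is essentially no hard step: the only points to handle with care are that $x,y \in \F$ so that the hypothesis on $\nabla f$ genuinely applies (this is guaranteed by the definition of $I_{i,j,z}$), and the degenerate case $i = j$, in which $e_i - e_j = 0$ makes $\phi_{i,j,z}$ constant and the estimate holds trivially with $L_{i,j} = 0 \le 2L$.
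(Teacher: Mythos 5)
Your proof is correct and complete: the chain-rule computation of $\dot\phi_{i,j,z}$, the Cauchy--Schwarz step against $e_i - e_j$, and the Lipschitz bound on $\nabla f$ (applicable since $t,s \in I_{i,j,z}$ guarantees both points lie in $\F$) combine exactly as you say to give the constant $L\norm{e_i-e_j}^2 = 2L$, with the case $i=j$ trivially handled. The paper itself does not prove this lemma but merely cites Section~3 of the reference by Beck, where the argument is the same standard one you reconstructed, so your proposal matches the intended proof in all essentials.
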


\begin{proof}
See Section~3 in~\cite{beck:2014}.
\end{proof}

Now, let $\bar L_{i,j}$ be some positive overestimates of $L_{i,j}$, with $L_{i,j}$ being the Lipschitz constants defined in Lemma~\ref{lemma:beck}. Namely,
\begin{equation}\label{lips_overest}
\bar L_{i,j} \ge L_{i,j}, \; \text{ such that } \; \bar L_{i,j} > 0, \qquad i, j = 1,\ldots,n.
\end{equation}
We will show that these overestimates can be used to compute, in closed form, a stepsize satisfying SC~\ref{sc}.
In particular, for a given fixed parameter $\gamma \in (0,1)$, at every inner iteration $(k,i)$ we can set
\begin{equation}\label{lips_stepsize}
\alpha^{k,i} = \min\biggl\{\bar \alpha^{k,i},\frac{2(1-\gamma)}{\bar L_{p^k_i,j(k)}}\biggr\}.
\end{equation}
As to be pointed out in the proof of the following proposition, this stepsize can be seen as
a particular case of the Armijo stepsize defined in Proposition~\ref{prop:armijo_stepsize}.
Note also that, since $L_{i,j} \le 2L$, every positive overestimate of $2L$ can be used in~\eqref{lips_stepsize}.

\begin{proposition}\label{prop:lips_stepsize}
Let us assume that $\nabla f$ is Lipschitz continuous over $\mathcal F$ with constant $L$ and let $\gamma \in (0,1)$.
Then, SC~\ref{sc} is satisfied by computing, at every inner iteration $(k,i)$,
the stepsize $\alpha^{k,i}$ as in~\eqref{lips_stepsize}.
\end{proposition}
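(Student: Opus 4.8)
The plan is to show that the closed-form stepsize~\eqref{lips_stepsize} is in fact a special case of the Armijo stepsize analyzed in Proposition~\ref{prop:armijo_stepsize}, so that SC~\ref{sc} follows from that proposition with essentially no new work. The key observation I would exploit is the standard descent estimate for functions with Lipschitz continuous gradient. Restricting $f$ to the line $t \mapsto z^{k,i} + t\,d^{k,i}$, I would work with the scalar function $\phi := \phi_{p^k_i, j(k), z^{k,i}}$ from Lemma~\ref{lemma:beck}, but rescaled to account for the fact that the step is $\alpha^{k,i} d^{k,i} = \alpha^{k,i} g^{k,i}(e_{p^k_i} - e_{j(k)})$ rather than a unit multiple of $e_{p^k_i} - e_{j(k)}$. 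Using~\eqref{lips_grad} and integrating the derivative, I get the quadratic upper bound
\[
f(z^{k,i} + \alpha d^{k,i}) \le f(z^{k,i}) + \alpha\,\nabla f(z^{k,i})^T d^{k,i} + \frac{L_{p^k_i,j(k)}}{2}\,\alpha^2\, (g^{k,i})^2,
\]
valid for every feasible $\alpha$, where I have used $\norm{e_{p^k_i}-e_{j(k)}}^2 = 2$ together with $(g^{k,i})^2 = -\nabla f(z^{k,i})^T d^{k,i}$ from~\eqref{dir_der}.

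The main step is then to verify that, with the choice~\eqref{lips_stepsize}, the Armijo acceptance condition~\eqref{armijo_condition} holds already at $c=0$ (i.e.\ with trial stepsize $\Delta^{k,i} = \alpha^{k,i}$). Substituting $\alpha = \alpha^{k,i}$ into the quadratic bound and using $\nabla f(z^{k,i})^T d^{k,i} = -(g^{k,i})^2$, I would show that the right-hand side is bounded above by $f(z^{k,i}) + \gamma\,\alpha^{k,i}\,\nabla f(z^{k,i})^T d^{k,i}$ precisely when $\alpha^{k,i} \le 2(1-\gamma)/\bar L_{p^k_i,j(k)}$; this inequality reduces, after dividing by $\alpha^{k,i}(g^{k,i})^2$, to the elementary bound $(L_{p^k_i,j(k)}/2)\,\alpha^{k,i} \le 1-\gamma$, which holds because $\alpha^{k,i} \le 2(1-\gamma)/\bar L_{p^k_i,j(k)} \le 2(1-\gamma)/L_{p^k_i,j(k)}$ by~\eqref{lips_overest}. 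Hence~\eqref{lips_stepsize} coincides with~\eqref{armijo_stepsize} for $c=0$ and $\Delta^{k,i} = \alpha^{k,i}$.

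To invoke Proposition~\ref{prop:armijo_stepsize} verbatim, I would finally identify the trial stepsize $\Delta^{k,i}$ as $\min\{\bar\alpha^{k,i}, A^{k,i}\}$ with $A^{k,i} := 2(1-\gamma)/\bar L_{p^k_i,j(k)}$, and check that this $A^{k,i}$ lies in a fixed interval $[A_l, A_u]$ with $0 < A_l \le A_u < \infty$. Since there are finitely many index pairs $(i,j)$ and each $\bar L_{i,j}$ is a fixed positive constant by~\eqref{lips_overest}, the quantity $2(1-\gamma)/\bar L_{p^k_i,j(k)}$ takes only finitely many values, all strictly positive and finite; I would therefore set $A_l := \min_{i,j} 2(1-\gamma)/\bar L_{i,j}$ and $A_u := \max_{i,j} 2(1-\gamma)/\bar L_{i,j}$. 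With these identifications, the stepsize~\eqref{lips_stepsize} is exactly the Armijo stepsize of Proposition~\ref{prop:armijo_stepsize}, and SC~\ref{sc} follows directly. The only point requiring care—and the one I would flag as the main obstacle—is the bookkeeping of the scaling by $g^{k,i}$ when passing between the unit-direction Lipschitz constant $L_{i,j}$ of Lemma~\ref{lemma:beck} and the actual search direction $d^{k,i}$; getting the factor of $(g^{k,i})^2$ and the factor $\norm{e_{p^k_i}-e_{j(k)}}^2 = 2$ to cancel correctly is what makes the threshold come out to exactly $2(1-\gamma)/\bar L_{p^k_i,j(k)}$ rather than some other constant.
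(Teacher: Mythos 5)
Your proposal is correct and follows essentially the same route as the paper's proof: derive the quadratic upper bound from Lemma~\ref{lemma:beck} (the paper's inequality~\eqref{phi_upp} with $t = \Delta^{k,i} g^{k,i}$), show the Armijo condition~\eqref{armijo_condition} holds with $c=0$ because $\alpha^{k,i} \le 2(1-\gamma)/\bar L_{p^k_i,j(k)}$, and invoke Proposition~\ref{prop:armijo_stepsize} with $A^{k,i} = 2(1-\gamma)/\bar L_{p^k_i,j(k)} \in [A_l, A_u]$. One cosmetic remark: your appeal to $\norm{e_{p^k_i}-e_{j(k)}}^2 = 2$ is unnecessary when working with $\phi_{p^k_i,j(k),z^{k,i}}$ and its constant $L_{p^k_i,j(k)}$ directly (that factor is already absorbed in $L_{i,j} \le 2L$), but your final bound is the correct one, so this does not affect the argument.
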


\begin{proof}
Let $i,j \in \{1,\ldots,n\}$ be any pair of indices and consider inequality~\eqref{lips_grad}.
Observing that $0 \in I_{i,j,z}$ for every feasible $z$, and using known results on functions with Lipschitz continuous gradient
(see, e.g.,~\cite{nesterov:2013}), we can write
\[
\phi_{i,j,z}(t) \le \phi_{i,j,z}(0) + t \, \dot \phi_{i,j,z}(0) + \frac{L_{i,j}} 2 t^2,
\quad \forall \, z \in \mathcal F, \quad \forall \, t \in I_{i,j,z}.
\]
Since $\dot \phi_{i,j,z}(t) = \nabla_i f\bigl(z+t(e_i-e_j)\bigr)-\nabla_j f\bigl(z+t(e_i-e_j)\bigr)$, it follows that
\begin{equation}\label{phi_upp}
f\bigl(z+t(e_i-e_j)\bigr) \le f(z) + t [\nabla_i f(z)-\nabla_j f(z)] + \frac{L_{i,j}} 2 t^2,
\quad \forall \, z \in \mathcal F, \quad \forall \, t \in I_{i,j,z}.
\end{equation}

To prove the assertion, it is sufficient to show that $\alpha^{k,i}$, defined as in~\eqref{lips_stepsize},
is a particular case of the Armijo stepsize defined in Proposition~\ref{prop:armijo_stepsize}.
To this extent, we can set $A^{k,i} = 2(1-\gamma)/ \bar L_{p^k_i,j(k)}$
(all these quantities are positive and finite) and, by~\eqref{lips_stepsize}, we obtain
$\alpha^{k,i} = \Delta^{k,i}$, with $\Delta^{k,i}$ defined as in Proposition~\ref{prop:armijo_stepsize}.
So, all we need is to prove that~\eqref{armijo_condition} holds with $c=0$ for this choice of $\Delta^{k,i}$. Namely, we want to show that
\begin{equation}\label{arm_decrease_lips}
f (z^{k,i} + \Delta^{k,i} d^{k,i}) \le f (z^{k,i}) + \gamma \Delta^{k,i} \nabla f(z^{k,i})^T d^{k,i}.
\end{equation}
Since $\Delta^{k,i} \le \bar \alpha^{k,i}$, and taking into account the definition of $d^{k,i}$ given in~\eqref{g_d_def}, we have that
\[
z^{k,i} + \Delta^{k,i} d^{k,i} = z^{k,i} + \Delta^{k,i} g^{k,i} (e_{p^k_i}-e_{j(k)}) \in \mathcal F,
\]
that is, $\Delta^{k,i} g^{k,i} \in I_{p^k_i,j(k),z^{k,i}}$ by definition~\eqref{feas_interv_lips}.
Therefore, using~\eqref{phi_upp} with $i$ and $j$ replaced by $p^k_i$ and $j(k)$, respectively, $z = z^{k,i}$ and $t = \Delta^{k,i} g^{k,i}$, we obtain
\[
\begin{split}
f(z^{k,i} + \Delta^{k,i} d^{k,i}) & \le f(z^{k,i}) - \Delta^{k,i} (g^{k,i})^2
                                    + \frac{\bar L_{p^k_i,j(k)}}2 (\Delta^{k,i})^2(g^{k,i})^2 \\
                                  & = f(z^{k,i}) + \Delta^{k,i} \nabla f(z^{k,i})^T d^{k,i} \biggl(1 - \frac{\bar L_{p^k_i,j(k)}}2 \Delta^{k,i}\biggr),
\end{split}
\]
where the equality follows from~\eqref{dir_der}.
Since $0 \le \Delta^{k,i} \le 2(1-\gamma)/\bar L_{p^k_i,j(k)}$, we get~\eqref{arm_decrease_lips}.
\end{proof}

\begin{remark}\label{rem:lips_stepsize}
As appears from the proof of Proposition~\ref{prop:lips_stepsize},
the stepsize given in~\eqref{lips_stepsize} satisfies SC~\ref{sc} by using, for every pair of indices $i,j \in \{1,\ldots,n\}$,
a positive constant $\bar L_{i,j} \ge L_{i,j}$, where it is sufficient that $L_{i,j}$ satisfies~\eqref{phi_upp}.
In particular, a case of interest is when we have a (possibly non-convex) separable objective function $f(x) = \sum_{i=1}^n f_i(x_i)$
where each $f_i \colon \R \to \R$ has a Lipschitz continuous derivative with constant $L_i$.
In this case, Proposition~\ref{prop:lips_stepsize} holds even with
$\bar L_{i,j}$ replaced by $\bar L_i + \bar L_j$ in~\eqref{lips_stepsize}, where $\bar L_i$ and $\bar L_j$ are
two positive overestimates of $L_i$ and $L_j$, respectively.
This follows from the fact that, in this case,~\eqref{phi_upp} holds even with $L_{i,j}$ replaced by $L_i + L_j$, since,
by Lipschitz continuity of $\dot f_1,\ldots,\dot f_n$, we have
\[
\begin{split}
f\bigl(z + t(e_i-e_j)\bigr) & = f_i (z_i+t) + f_j(z_j-t) + \sum_{h \ne i,j} f_h(z_h) \\
                            & \le f(z) + t [\dot f_i(z_i) - \dot f_j(z_j)] + \frac {L_i + L_j} 2 t^2 \\
                            & = f(z) + t [\nabla_i f(z)-\nabla_j f(z)] + \frac{L_i + L_j} 2 t^2.
\end{split}
\]
\end{remark}

Now, let us analyze the case where the objective function is quadratic of the following form:
$f(x) = \frac 1 2 x^T Q x - q^T x$, with $Q \in \R^{n \times n}$ symmetric and $q \in \R^n$.
Denoting by $Q_{i,j}$ the element of $Q$ in position $(i,j)$, we have (again from~\cite{beck:2014})
\[
L_{i,j} = \abs{Q_{i,i} + Q_{j,j} - 2 Q_{i,j}}, \quad i, j \in \{1,\ldots,n\}.
\]
So, at every inner iteration $(k,i)$, we can easily obtain (a positive overestimate of) $L_{p^k_i,j(k)}$
in order to compute the stepsize $\alpha^{k,i}$ as in~\eqref{lips_stepsize}.

Moreover, if $(d^{k,i})^T Q d^{k,i} > 0$ for a given direction $d^{k,i}$, we can write
\[
0 < \frac 1 {Q_{p^k_i,p^k_i} + Q_{j(k),j(k)} - 2 Q_{p^k_i,j(k)}} =
-\dfrac{\nabla f(z^{k,i})^T d^{k,i}}{(d^{k,i})^T Q d^{k,i}} \in \argmin_{\alpha \in \R} f(z^{k,i} + \alpha d^{k,i}).
\]
It follows that, when $(d^{k,i})^T Q d^{k,i} > 0$, we can set $\gamma = 1/2$, $\bar L_{p^k_i,j(k)} = L_{p^k_i,j(k)}$
and the stepsize given in~\eqref{lips_stepsize} is the exact stepsize, i.e.,
it is the feasible minimizer of $f(z^{k,i} + \alpha d^{k,i})$ with respect to $\alpha$.

Vice versa, if $(d^{k,i})^T Q d^{k,i} \le 0$ for a given direction $d^{k,i}$,
we can exploit the fact that the greatest objective decrease along $d^{k,i}$
is achieved by setting $\alpha^{k,i}$ as large as possible, since
\begin{equation}\label{quad_step}
f(z^{k,i} + \alpha d^{k,i}) = f(z^{k,i}) + \alpha \nabla f(z^{k,i})^T d^{k,i} + \dfrac 1 2 \alpha^2 (d^{k,i})^T Q d^{k,i}, \quad \forall \, \alpha \in \R.
\end{equation}
So, we can set $\alpha^{k,i} = \min\{\bar \alpha^{k,i}, A_u\}$, with $0 < A_u < \infty$ being a (large) fixed parameter.
Using~\eqref{quad_step}, it is easy to see that also this stepsize is a particular case of the Armijo stepsize defined in Proposition~\ref{prop:armijo_stepsize}
(indeed, for every nonnegative value of $\Delta^{k,i}$, the Armijo condition~\eqref{armijo_condition} is satisfied by $c = 0$).

\subsection{Strictly convex objective functions}
Let us consider a strictly convex objective function.
In this case, we can satisfy SC~\ref{sc} by computing, at every inner iteration $(k,i)$, the stepsize $\alpha^{k,i}$ by an exact line search, that is,
\begin{equation}\label{exact_stepsize}
\alpha^{k,i} \in \argmin \, \{f(z^{k,i} + \alpha d^{k,i}) \colon \alpha \in [0,\bar \alpha^{k,i}]\}.
\end{equation}

To ensure that the above minimization is well defined at every inner iteration, we assume that $\LS$ is compact.

\begin{proposition}\label{prop:exact_stepsize}
Let us assume that $f$ is strictly convex and $\LS$ is compact.
Then, SC~\ref{sc} is satisfied by computing, at every inner iteration $(k,i)$,
the stepsize $\alpha^{k,i}$ as in~\eqref{exact_stepsize}.
\end{proposition}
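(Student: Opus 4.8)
The plan is to verify the three points of SC~\ref{sc} in turn, using compactness of $\LS$ both to make the exact line search well defined and to drive the limiting arguments. I would begin with well-definedness of \eqref{exact_stepsize}. Every iterate lies in $\LS$: by point~\ref{sc_ls} (proved momentarily) the objective does not increase along the inner iterations, so $f(z^{k,i})\le f(x^k)\le f(x^0)$, and the iterates are feasible. Since $\LS$ is compact, the set of $\alpha\ge0$ with $z^{k,i}+\alpha d^{k,i}\in\LS$ is bounded; as $f$ is continuous this forces the minimum in \eqref{exact_stepsize} to be attained even when $\bar\alpha^{k,i}=\infty$. Point~\ref{sc_ls} is then immediate, because $\alpha=0$ is admissible in \eqref{exact_stepsize} (whence $f(z^{k,i+1})\le f(z^{k,i})$) and $\alpha^{k,i}\le\bar\alpha^{k,i}$ guarantees $z^{k,i+1}\in\F$.

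Next I would record the telescoping fact used in both remaining points: since $\{f(x^k)\}$ is monotone non-increasing and bounded below on the compact set $\LS$, it converges, and summing the nonnegative terms $f(z^{k,i})-f(z^{k,i+1})$ over $i=1,\ldots,n$ gives $f(x^k)-f(x^{k+1})\to0$, hence $f(z^{k,i})-f(z^{k,i+1})\to0$ for each $i$. To prove point~\ref{sc_lim_dist_inner_iter} I would argue by contradiction: if $\norm{z^{k,i+1}-z^{k,i}}\not\to0$ for some $i$, then by compactness of $\LS$ I can extract a subsequence along which $z^{k,i}\to\tilde z$ and $z^{k,i+1}\to\tilde w$ with $\tilde w\ne\tilde z$, while the telescoping fact forces $f(\tilde z)=f(\tilde w)$. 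The key step is the midpoint trick: the point $z^{k,i}+\tfrac12\alpha^{k,i}d^{k,i}=\tfrac12(z^{k,i}+z^{k,i+1})$ uses the admissible stepsize $\tfrac12\alpha^{k,i}\in[0,\bar\alpha^{k,i}]$, so exact minimality gives $f\bigl(\tfrac12(z^{k,i}+z^{k,i+1})\bigr)\ge f(z^{k,i+1})$; passing to the limit yields $f\bigl(\tfrac12(\tilde z+\tilde w)\bigr)\ge f(\tilde w)=f(\tilde z)$, which contradicts strict convexity since $f\bigl(\tfrac12(\tilde z+\tilde w)\bigr)<\tfrac12 f(\tilde z)+\tfrac12 f(\tilde w)=f(\tilde z)$.

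Finally, for point~\ref{sc_lim_gd} I would take the subsequence $\{z^{k,\hat\imath}\}_K$ with constant $p^k_{\hat\imath}=\hat t$ and $j(k)=\hat\jmath$ and $\liminf_{k\in K}\bar\alpha^{k,\hat\imath}=\xi>0$, observe that $z^{k,\hat\imath}\to\tilde z$ and $d^{k,\hat\imath}\to\tilde d$ by continuity, and recall from \eqref{dir_der} that $\nabla f(z^{k,\hat\imath})^Td^{k,\hat\imath}\to-\tilde g^2$, where $\tilde g=\nabla_{\hat\jmath}f(\tilde z)-\nabla_{\hat t}f(\tilde z)$. Assuming $\tilde g\ne0$ for contradiction, $\tilde d$ is a descent direction at $\tilde z$, so some fixed $\hat\alpha\in(0,\xi/2)$ satisfies $f(\tilde z+\hat\alpha\tilde d)<f(\tilde z)$; for large $k\in K$ we have $\bar\alpha^{k,\hat\imath}\ge\hat\alpha$, so exact minimality gives $f(z^{k,\hat\imath})-f(z^{k,\hat\imath+1})\ge f(z^{k,\hat\imath})-f(z^{k,\hat\imath}+\hat\alpha d^{k,\hat\imath})$, where the left-hand side tends to $0$ by the telescoping fact while the right-hand side tends to $f(\tilde z)-f(\tilde z+\hat\alpha\tilde d)>0$, a contradiction, so $\tilde g=0$ and the directional derivative tends to $0$. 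I expect the main obstacle to be the well-definedness of \eqref{exact_stepsize} under a possibly infinite $\bar\alpha^{k,i}$, which is exactly where compactness of $\LS$ is indispensable; once that is settled, the two substantive ideas are the midpoint/strict-convexity comparison for point~\ref{sc_lim_dist_inner_iter} and the fixed-reference-step comparison for point~\ref{sc_lim_gd}, both of which are legitimate only because the exact line search dominates every admissible stepsize.
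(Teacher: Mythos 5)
Your proposal is correct, and it splits into a part that mirrors the paper and a part that does not. Your treatment of well-definedness, of point~\ref{sc_ls}, and of point~\ref{sc_lim_dist_inner_iter} is essentially the paper's own proof: the paper likewise derives convergence of $\{f(x^k)\}$ from monotonicity and compactness of $\LS$, and proves point~\ref{sc_lim_dist_inner_iter} by the same midpoint trick, observing that $z^{k,\hat\imath}+\tfrac12\alpha^{k,\hat\imath}d^{k,\hat\imath}=\tfrac12(z^{k,\hat\imath}+z^{k,\hat\imath+1})$ is admissible and letting strict convexity contradict $f\bigl(\tfrac12(z'+z'')\bigr)=\tfrac12 f(z')+\tfrac12 f(z'')$ with $z'\ne z''$ (your one minor slip --- claiming the set of $\alpha\ge 0$ with $z^{k,i}+\alpha d^{k,i}\in\LS$ is bounded --- fails only in the degenerate case $d^{k,i}=0$, which is harmless since then $\bar\alpha^{k,i}=0$ by convention~\eqref{max_stepsize_def}). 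For point~\ref{sc_lim_gd}, however, you take a genuinely different route. The paper does not argue directly: it observes that the exact line search achieves at least the decrease of the Armijo step and then invokes the whole contradiction machinery of Proposition~\ref{prop:armijo_stepsize}, i.e., $\alpha^{k,\hat\imath}\abs{\nabla f(z^{k,\hat\imath})^T d^{k,\hat\imath}}\to 0$, hence $\alpha^{k,\hat\imath}\to 0$, then the mean value theorem applied to the failed backtracking step $\alpha^{k,\hat\imath}/\delta$ yields $-\eta\ge-\gamma\eta$. Your argument instead compares the exact step against a single fixed trial stepsize $\hat\alpha\in(0,\xi/2)$ with $f(\tilde z+\hat\alpha\tilde d)<f(\tilde z)$, which is admissible for large $k\in K$ because $\liminf_{k\in K}\bar\alpha^{k,\hat\imath}=\xi>0$; then $f(z^{k,\hat\imath})-f(z^{k,\hat\imath+1})\ge f(z^{k,\hat\imath})-f(z^{k,\hat\imath}+\hat\alpha d^{k,\hat\imath})$ has left-hand side tending to $0$ and right-hand side tending to a positive number, a contradiction. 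This is sound: the hypotheses of point~\ref{sc_lim_gd} hand you a convergent subsequence with constant indices, so $d^{k,\hat\imath}\to\tilde d$ and, by~\eqref{dir_der}, the directional derivatives converge to $-\tilde g^2$, making the contradiction hypothesis $\tilde g\ne 0$ exactly the negation needed. What each approach buys: the paper's reduction is shorter on the page and reuses an argument already proved, at the cost of routing a conceptually simple fact through backtracking analysis and the mean value theorem; your fixed-reference-step comparison is self-contained, more elementary (no mean value theorem, no Armijo parameters), and makes transparent that point~\ref{sc_lim_gd} for exact search needs only continuity of $\nabla f$ and the dominance of the exact step over every admissible stepsize --- not convexity, which in both proofs is consumed entirely by point~\ref{sc_lim_dist_inner_iter}.
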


\begin{proof}
Point~\ref{sc_ls} of SC~\ref{sc} immediately follows from the definition of $\alpha^{k,i}$.
Since $f(x^{k+1}) = f(z^{k,n+1}) \le f(z^{k,n}) \le \ldots \le f(z^{k,1}) = f(x^k)$ and each $z^{k,i}$ lies in the compact set $\LS$,
it follows that $\{f(x^k)\}$ converges. So, to prove point~\ref{sc_lim_dist_inner_iter} of SC~\ref{sc}
we have to show that $\displaystyle{\lim_{k \to \infty} \norm{z^{k,i+1}-z^{k,i}} = 0}$, $i = 1,\ldots,n$.
Arguing by contradiction, assume that this is not true.
Then, there exist a real number $\rho > 0$, an index $\hat \imath \in \{1,\ldots,n\}$
and an infinite subsequence $\{z^{k,\hat \imath}\}_{K \subseteq \{0,1,\ldots\}}$ such that
$\norm{z^{k,\hat \imath+1} - z^{k,\hat \imath}} \ge \rho$ for all $k \in K$.
Since every point $z^{k,i}$ lies in the compact set $\LS$, there also exist a further infinite subsequence,
that we still denote by $\{z^{k,\hat \imath}\}_K$ without loss of generality, and two distinct points $z', z'' \in \R^n$ such that
\begin{equation}\label{lim_z}
\lim_{\substack{k \to \infty \\ k \in K}} z^{k,\hat \imath} = z' \quad \text{and} \quad
\lim_{\substack{k \to \infty \\ k \in K}} z^{k,\hat \imath+1} = z''.
\end{equation}
As $\alpha^{k,i}$ is obtained by an exact line search, we can write
\begin{equation}\label{ineq_exact_stepsize}
\begin{split}
f(z^{k,\hat \imath+1}) & \le f\Bigl(z^{k,\hat \imath} + \frac 1 2 \alpha^{k,\hat \imath} d^{k,\hat \imath}\Bigr) = f\Bigl(\frac{z^{k,\hat \imath}+z^{k,\hat \imath+1}}2\Bigr) \\
                       & \le \frac 1 2 f(z^{k,\hat \imath}) + \frac 1 2 f(z^{k,\hat \imath+1}) \le f(z^{k,\hat \imath}),
\end{split}
\end{equation}
where the second inequality follows from the convexity of $f$ and the last inequality follows from the fact that $f(z^{k,\hat \imath+1}) \le f(z^{k,\hat \imath})$.
Moreover, since $\{f(x^k)\}$ converges, a real number $\bar f$ exists such that
\begin{equation}\label{lim_f}
\lim_{k \to \infty} f(z^{k,i}) = \bar f, \quad i = 1,\ldots,n.
\end{equation}
By continuity of the objective function, we can write
\[
\lim_{\substack{k \to \infty \\ k \in K}} f(z^{k,\hat \imath}) = f(z') = \bar f \quad \text{and} \quad
\lim_{\substack{k \to \infty \\ k \in K}} f(z^{k,\hat \imath+1}) = f(z'') = \bar f.
\]
So, passing to the limit in~\eqref{ineq_exact_stepsize}, we obtain
$\bar f \le f\Bigl(\dfrac{z'+z''}2\Bigr) \le \bar f$, that is, $f\Bigl(\dfrac{z'+z''}2\Bigr) = \bar f$.
Adding to the left-hand side of this equality the two quantities $\Bigl(\dfrac 1 2 \bar f - \dfrac 1 2 f(z')\Bigr)$ and
$\Bigl(\dfrac 1 2 \bar f - \dfrac 1 2 f(z'')\Bigr)$, that are both equal to zero, we get
\[
f\Bigl(\dfrac{z'+z''}2\Bigr) = \dfrac 1 2 f(z')  + \dfrac 1 2 f(z''),
\]
contradicting the fact that $f$ is strictly convex and $z' \ne z''$. Then, point~\ref{sc_lim_dist_inner_iter} of SC~\ref{sc} holds.

Finally, point~\ref{sc_lim_gd} of SC~\ref{sc} can be proved by the same arguments used in the proof of
Proposition~\ref{prop:armijo_stepsize} for the Armijo stepsize, just observing that the
objective decrease achieved by the exact line search is greater than or equal to the one achieved by the Armijo line search.
\end{proof}

\section{Convergence rate analysis}\label{sec:rate}
In this section, we show that the convergence rate of AC2CD is linear under standard additional assumptions.

The key to prove linear convergence rate of AC2CD is to show
a relation between the points produced by AC2CD and the points produced by
the classical coordinate descent method applied to an equivalent transformed problem.
Then, linear convergence rate follows from the well known properties of the classical coordinate descent method
proved by Luo and Tseng~\cite{luo:1992} and by Beck and Tetruashvili~\cite{beck:2013}.

In particular, here we give two results.
The first one is more general and states that, eventually, $\{f(x^k)\}$ converges linearly to the optimal value of problem~\eqref{prob}, that is,
$C \in [0,1)$ exists such that $f(x^{k+1})-f(x^*) \le C [f(x^k)-f(x^*)]$ for all sufficiently large $k$, with $x^*$ being the optimal solution of problem~\eqref{prob}.
The second result is for the case where there are no bounds on the variables and establishes a non-asymptotic linear convergence rate,
that is, the above inequality holds for every $k \ge 0$.

Let us start by showing the general result.
First, we need a specific rule to compute the index $j(k)$ in order
to ensure that it remains constant from a certain outer iteration $\hat k$.
In particular, we initialize AC2CD with $\tau \in (0,1)$ (step~0 of Algorithm~\ref{alg:ac2cd})
and, from a certain $k \ge 1$, we adopt the following rule:
$j(k) = j(k-1)$ if this choice satisfies~\eqref{j(k)}, otherwise $j(k)$ is set equal to any index $h$ such that $D_h(x^k) = D^k$,
where $D_h$ and $D^k$ are the operators given in~\eqref{Dh} and~\eqref{Dk}, respectively. Namely,
\begin{equation}\label{rate_j_k}
j(k)
\begin{cases}
= j(k-1), \quad                                           & \text{if this choice satisfies~\eqref{j(k)}}, \\
\displaystyle{\in \argmax_{h=1,\ldots,n} D_h(x^k)}, \quad & \text{otherwise}.
\end{cases}
\end{equation}
We can state the following intermediate lemma.

\begin{lemma}\label{lemma:j_k_rate}
Let Assumption~\ref{assumpt:l0_int_point} hold and let $\tau \in (0,1)$.
Let $\{x^k\}$ be a sequence of points produced by AC2CD,
where $j(k)$ is computed as in~\eqref{rate_j_k} from a certain $k \ge 1$.
Let us also assume that $\lim_{k \to \infty} \{x^k\} = x^* \in \R^n$.

Then, there exist a variable index $\bar \jmath$ and an outer iteration $\hat k$ such that
$j(k) = \bar \jmath$ for all $k \ge \hat k$.
Moreover, $x^*_{\bar \jmath} \in (l_{\bar \jmath}, u_{\bar \jmath})$.
\end{lemma}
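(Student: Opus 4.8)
The plan is to show that the index rule~\eqref{rate_j_k} eventually ``locks in'' on a single index by arguing that the sequence $\{j(k)\}$ can change only finitely many times. The starting observation is that, since $x^k \to x^*$ and the operators $D_h$ are continuous on $\F$, we have $D_h(x^k) \to D_h(x^*)$ for every $h$, and hence $D^k = \max_h D_h(x^k) \to D^*:= \max_h D_h(x^*)$. Because Assumption~\ref{assumpt:l0_int_point} holds and $x^*\in\LS$ (every $x^k$ lies in $\LS$ by point~\ref{sc_ls} of SC~\ref{sc}, and $\LS$ is closed), there is an index $\bar h$ with $x^*_{\bar h}\in(l_{\bar h},u_{\bar h})$, so $D_{\bar h}(x^*)>0$ and therefore $D^*>0$.

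Next I would exploit the strict inequality $\tau<1$. The key quantitative point is that the first branch of~\eqref{rate_j_k} is a ``sticky'' rule: once $j(k)=\bar\jmath$ for some index with $D_{\bar\jmath}(x^*)>0$, the condition $D_{\bar\jmath}(x^k)\ge\tau D^k$ will continue to hold for all larger $k$, because in the limit $D_{\bar\jmath}(x^*)\ge\tau D^*$ holds with a strict gap whenever $D_{\bar\jmath}(x^*)=D^*$ (since $\tau<1$ forces $D^*>\tau D^*$), and continuity propagates a nonstrict version for large $k$. Concretely, the second branch of~\eqref{rate_j_k} selects some $h$ achieving $D_h(x^k)=D^k$, so $D_{j(k)}(x^k)=D^k\to D^*>0$; thus whenever a switch occurs, the newly chosen index $j(k)$ satisfies $D_{j(k)}(x^k)=D^k$. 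I would then show that for all sufficiently large $k$, an index chosen this way continues to satisfy~\eqref{j(k)} at the next iteration: since $D_{j(k)}(x^{k+1})\to D^*$ and $\tau D^{k+1}\to\tau D^*<D^*$, for large $k$ we get $D_{j(k)}(x^{k+1})\ge\tau D^{k+1}$, so the first branch applies at iteration $k+1$ and the index is retained. This forbids any further switching past a threshold, giving a last switch point $\hat k$ and a constant value $\bar\jmath:=j(\hat k)$ with $j(k)=\bar\jmath$ for all $k\ge\hat k$.

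Finally, to obtain $x^*_{\bar\jmath}\in(l_{\bar\jmath},u_{\bar\jmath})$, I would pass to the limit in $D_{\bar\jmath}(x^k)\ge\tau D^k$. Since $\bar\jmath=j(k)$ for all large $k$ and the second branch guarantees $D_{\bar\jmath}(x^k)=D^k\to D^*$ at the switch (or, if no switch ever occurs after $\hat k-1$, the retained index already satisfied~\eqref{j(k)}), taking limits yields $D_{\bar\jmath}(x^*)\ge\tau D^*>0$. By definition~\eqref{Dh}, $D_{\bar\jmath}(x^*)=\min\{x^*_{\bar\jmath}-l_{\bar\jmath},u_{\bar\jmath}-x^*_{\bar\jmath}\}>0$ means both $x^*_{\bar\jmath}>l_{\bar\jmath}$ and $x^*_{\bar\jmath}<u_{\bar\jmath}$, i.e. $x^*_{\bar\jmath}\in(l_{\bar\jmath},u_{\bar\jmath})$.

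I expect the main obstacle to be the bookkeeping around the switching argument: one must carefully handle the case distinction in~\eqref{rate_j_k} and verify that the strict inequality $\tau<1$ (rather than $\tau\le1$) is exactly what prevents infinitely many switches. The delicate point is ruling out a scenario where the running index $j(k-1)$ repeatedly fails~\eqref{j(k)} while the arg-max index keeps changing; showing that once any index with positive limiting distance is selected it is retained forever is where the gap $D^*-\tau D^*>0$ must be used explicitly, together with the uniform convergence $D_h(x^k)\to D_h(x^*)$ over the finitely many indices $h$.
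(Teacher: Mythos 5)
Your strategy is sound and reaches both conclusions, but it is organized differently from the paper's proof. The paper establishes the eventual constancy of $j(k)$ by contradiction: if the index changed infinitely often, finiteness of $\{1,\ldots,n\}$ yields an index $j_1$ that is switched \emph{into} along a subsequence $K_1$ (where the second branch of~\eqref{rate_j_k} gives $D_{j_1}(x^k) = D^k \ge D_{j^*}(x^k)$, with $j^* \in \argmax_h D_h(x^*)$) and switched \emph{out of}, toward a fixed $j_2$, along a subsequence $K_2$ (where failure of~\eqref{j(k)} gives $D_{j_1}(x^k) < \tau D^k = \tau D_{j_2}(x^k)$); passing to the limit in both relations yields $0 < D_{j^*}(x^*) \le \tau D_{j_2}(x^*) \le \tau D_{j^*}(x^*)$, contradicting $\tau < 1$. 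Your direct ``lock-in'' argument replaces this double subsequence extraction with the quantitative gap $(1-\tau)D^* > 0$: any index selected by the argmax branch late enough is retained forever. Both proofs consume exactly the same ingredients (continuity of $D_h$, positivity $D^* > 0$ via Assumption~\ref{assumpt:l0_int_point} --- which needs $x^* \in \LS$, a point you justify explicitly and the paper leaves implicit --- and strictness of $\tau < 1$); yours additionally produces an explicit threshold past which at most one switch can occur, and it obtains $D_{\bar\jmath}(x^*) \ge \tau D^* > 0$ directly by passing to the limit in~\eqref{j(k)}, whereas the paper proves the second conclusion by a separate contradiction.

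One step needs tightening: retention ``at the next iteration'' does not by itself forbid all later switches. Your one-step estimate uses the anchor $D_{j(k)}(x^k) = D^k$ available at a switch iteration; after $k+1$ you only know $D_{j(k)}(x^{k+1}) \ge \tau D^{k+1}$, and the one-step argument cannot be iterated from that weaker information. The repair is immediate and is precisely the uniform statement you gesture at in your closing paragraph: since the \emph{whole} sequence converges to the single point $x^*$ and each $D_h$ is $1$-Lipschitz (a minimum of two $1$-Lipschitz maps), one has $\sup_{k' > k} \abs{D_h(x^{k'}) - D_h(x^k)} \le \sup_{k' > k} \norm{x^{k'} - x^k} \to 0$ as $k \to \infty$, uniformly over the finitely many $h$. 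Hence, setting $\epsilon = (1-\tau)D^*/3$ and taking $k$ beyond a suitable threshold, a switch at $k$ gives, for \emph{every} $k' > k$, $D_{j(k)}(x^{k'}) \ge D^k - \epsilon \ge D^* - 2\epsilon \ge \tau D^* + \epsilon \ge \tau D^{k'}$, so the first branch of~\eqref{rate_j_k} retains $j(k)$ at all subsequent iterations, not merely the next one. (Equivalently: some index $h$ is argmax-selected infinitely often; for that fixed $h$, $D_h(x^*) = D^* > \tau D^*$, so $D_h(x^{k'}) \ge \tau D^{k'}$ for all sufficiently large $k'$, and the first selection of $h$ past that threshold locks it in.) With this strengthening your proof is complete.
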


\begin{proof}
Let $j^* \in \argmax_{h = 1,\ldots,n} D_h(x^*)$.
From Assumption~\ref{assumpt:l0_int_point}, we have $D_{j^*}(x^*)> 0$.

First, we prove that there exist a variable index $\bar \jmath$ and an outer iteration $\hat k$ such that
$j(k) = \bar \jmath$ for all $k \ge \hat k$.
Arguing by contradiction, assume that this is not true.
Since the set of indices $\{1,\ldots,n\}$ is finite, there exist two indices $j_1,j_2 \in \{1,\ldots,n\}$
and two infinite subsequences $\{x^k\}_{K_1}$ and $\{x^k\}_{K_2}$ such that
\[
\begin{cases}
j(k-1) \ne j_1, \\
j(k) = j_1,
\end{cases}
\forall \, k \in K_1, \qquad \quad \text{and} \qquad \quad
\begin{cases}
j(k-1) = j_1, \\
j(k) = j_2,
\end{cases}
\forall \, k \in K_2.
\]
Since $j(k)$ is computed as in~\eqref{rate_j_k} from a certain $k \ge 1$, it follows that
\begin{alignat*}{2}
& D_{j_1}(x^k) = D^k \ge D_{j^*}(x^k), \qquad         && \forall \, \text{sufficiently large } k \in K_1, \\
& D_{j_1}(x^k) < \tau D^k = \tau D_{j_2}(x^k), \qquad && \forall \, \text{sufficiently large } k \in K_2.
\end{alignat*}
By continuity of the operator $D_h$, we can write
\begin{align*}
& D_{j_1}(x^*) = \lim_{\substack{k \to \infty \\ k \in K_1}} D_{j_1}(x^k) \ge \lim_{\substack{k \to \infty \\ k \in K_1}} D_{j^*}(x^k) = D_{j^*}(x^*), \\
& D_{j_1}(x^*) = \lim_{\substack{k \to \infty \\ k \in K_2}} D_{j_1}(x^k) \le \tau \lim_{\substack{k \to \infty \\ k \in K_2}} D_{j_2}(x^k) = \tau D_{j_2}(x^*).
\end{align*}
Combining these two inequalities, and recalling that $D_{j^*}(x^*) > 0$, we obtain
\[
0 < D_{j^*}(x^*) \le \tau D_{j_2}(x^*).
\]
This is contradiction, since $D_{j^*}(x^*) \ge D_{j_2}(x^*)$ and $\tau \in (0,1)$.
So, a variable index $\bar \jmath$ and an outer iteration $\hat k$ exist such that $j(k) = \bar \jmath$ for all $k \ge \hat k$.

To prove that $x^*_{\bar \jmath} \in (l_{\bar \jmath},u_{\bar \jmath})$, assume by contradiction that $D_{\bar \jmath} (x^*) = 0$.
Since $j(k)$ is computed as in~\eqref{rate_j_k} from a certain $k \ge 1$ and $j(k) = \bar \jmath$ for all $k \ge \hat k$,
for all sufficiently large $k$ we have $D_{\bar \jmath}(x^k) \ge \tau D^k \ge \tau D_{j^*}(x^k)$.
By continuity of the operator $D_h$, we obtain
\[
0 = D_{\bar \jmath}(x^*) = \lim_{k \to \infty} D_{\bar \jmath}(x^k) \ge \tau \lim_{k \to \infty} D_{j^*}(x^k) = \tau D_{j^*}(x^*),
\]
which leads to a contradiction, since $D_{j^*}(x^*) > 0$ and $\tau \in (0,1)$.
Therefore, $x^*_{\bar \jmath} \in (l_{\bar \jmath},u_{\bar \jmath})$.
\end{proof}

Now, we are ready to show that, eventually, $\{f(x^k)\}$ converges linearly to $f(x^*)$ under the following assumption.
\begin{assumption}\label{assumpt:rate}
It holds that
\begin{itemize}
\item $f$ is strictly convex twice continuously differentiable over $\R^n$;
\item the optimal solution of problem~\eqref{prob}, denoted by $x^*$, exists;
\item $\nabla^2 f(x^*) \succ 0$.
\end{itemize}
\end{assumption}

Note that Assumption~\ref{assumpt:rate} implies that $\LS$ is compact (see Lemma~9.1 in~\cite{tseng:1991}).
Therefore, if also Assumption~\ref{assumpt:l0_int_point} holds and SC~\ref{sc} is satisfied,
then $\{x^k\}$ converges to the optimal solution $x^*$.
As a further consequence, under Assumption~\ref{assumpt:rate} we can
compute the stepsize by an exact line search to satisfy SC~\ref{sc} (see Proposition~\ref{prop:exact_stepsize}).

\begin{theorem}\label{th:rate}
Let Assumption~\ref{assumpt:l0_int_point} and~\ref{assumpt:rate} hold, and let $\tau \in (0,1)$.
Let $\{x^k\}$ be a sequence of points produced by AC2CD, where $j(k)$ is computed as in~\eqref{rate_j_k} from a certain $k \ge 1$
and the stepsize is computed as indicated in Proposition~\ref{prop:exact_stepsize}.

Then, a real number \mbox{$C \in [0,1)$} and an outer iteration $\bar k$ exist such that
\[
f(x^{k+1})-f(x^*) \le C [f(x^k)-f(x^*)], \quad \forall \, k \ge \bar k.
\]
\end{theorem}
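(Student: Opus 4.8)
The plan is to reduce, for all sufficiently large $k$, the iterations of AC2CD to those of the classical (almost) cyclic coordinate descent method applied to a transformed box-constrained problem, and then to invoke the linear-rate results of Luo and Tseng~\cite{luo:1992} and of Beck and Tetruashvili~\cite{beck:2013}.

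First I would assemble the asymptotic structure. Under Assumption~\ref{assumpt:rate}, $\LS$ is compact and $f$ has a unique minimizer $x^*$; the exact line search satisfies SC~\ref{sc} by Proposition~\ref{prop:exact_stepsize}, so by Theorem~\ref{th:conv} every limit point of $\{x^k\}$ is stationary, hence optimal by convexity, and the whole sequence converges: $x^k \to x^*$. By Lemma~\ref{lemma:j_k_rate} there exist $\bar\jmath$ and $\hat k$ with $j(k)=\bar\jmath$ for all $k\ge\hat k$ and $x^*_{\bar\jmath}\in(l_{\bar\jmath},u_{\bar\jmath})$. Arguing exactly as for~\eqref{D_j_k} in the proof of Theorem~\ref{th:conv}, I would fix $\rho>0$ and $\bar k\ge\hat k$ such that $D_{\bar\jmath}(z^{k,i})\ge\rho$ for all inner iterates and all $k\ge\bar k$; that is, the bound on $x_{\bar\jmath}$ is never active along any inner step.

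Next I would set up the transformation by eliminating $x_{\bar\jmath}$ through the equality constraint, writing $x_{\bar\jmath}=b-\sum_{h\ne\bar\jmath}x_h$ and defining $\tilde f(y):=f(x)$ for $y=(x_h)_{h\ne\bar\jmath}\in\R^{n-1}$. Since $y\mapsto x$ is affine with full-rank linear part $M$, the function $\tilde f$ is strictly convex and twice continuously differentiable, with $\nabla^2\tilde f=M^T\nabla^2 f\,M\succ0$ at the image $y^*$ of $x^*$; by continuity $\nabla^2\tilde f\succeq\mu I$ for some $\mu>0$ on a neighborhood of $y^*$, so $\tilde f$ is strongly convex there and, on the compact level set, $\nabla\tilde f$ is Lipschitz. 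The crucial observation is that a step along $e_{p^k_i}-e_{\bar\jmath}$ in the original variables is exactly a single-coordinate step in $y_{p^k_i}$ for $\tilde f$; and because the $x_{\bar\jmath}$ bound is inactive for $k\ge\bar k$, the feasible range $\bar\alpha^{k,i}$ and the exact line search of AC2CD coincide with the coordinatewise exact line search for $\tilde f$ over the box $B=\prod_{h\ne\bar\jmath}[l_h,u_h]$. A no-op step ($p^k_i=\bar\jmath$, Remark~\ref{rem:inner_it}) simply skips the eliminated coordinate. Hence, for $k\ge\bar k$, one outer iteration of AC2CD is precisely one sweep of almost cyclic coordinate descent on $\min_{y\in B}\tilde f(y)$, updating each of the $n-1$ coordinates once in the order $\{p^k_i\}$. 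Invoking the feasible-descent / coordinate-descent framework of~\cite{luo:1992} (whose local error bound is supplied by strong convexity near $y^*$) and the cyclic result of~\cite{beck:2013} then yields a constant $C\in[0,1)$ with $\tilde f(y^{k+1})-\tilde f(y^*)\le C[\tilde f(y^k)-\tilde f(y^*)]$ for all large $k$; since $\tilde f(y^k)=f(x^k)$ and $\tilde f(y^*)=f(x^*)$, this is exactly the claimed inequality.

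I expect the main obstacle to be the equivalence step: one must verify rigorously that the inactivity of the $x_{\bar\jmath}$ bound makes AC2CD's exact line search coincide with coordinatewise exact minimization of $\tilde f$ over $B$, and that the varying permutations $\{p^k_i\}$ are admissible for the cited linear-rate theorems—which they are, since each coordinate is updated exactly once per sweep, matching the almost cyclic rule. A secondary care point is the localization of strong convexity and Lipschitz continuity to the region where the tail iterates lie, which is where the compactness of $\LS$ together with $\nabla^2 f(x^*)\succ0$ is used.
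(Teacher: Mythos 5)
Your proposal follows essentially the same route as the paper's own proof: whole-sequence convergence to $x^*$, Lemma~\ref{lemma:j_k_rate} to freeze $j(k)=\bar\jmath$ with $x^*_{\bar\jmath}\in(l_{\bar\jmath},u_{\bar\jmath})$, elimination of $x_{\bar\jmath}$ via the full-rank affine map $x=My+w$, the observation that eventual inactivity of the $\bar\jmath$ bounds makes AC2CD's exact line search coincide with coordinatewise exact minimization of $\psi$ over the box, and the asymptotic linear rate of Luo and Tseng~\cite{luo:1992} whose hypotheses you verify exactly as the paper does. The only cosmetic difference is that the paper argues bound inactivity directly from $z^{k,i}\to x^*$ rather than via a $D_{\bar\jmath}$ lower bound, and it invokes only~\cite{luo:1992} here (reserving~\cite{beck:2013} for the unbounded case of Theorem~\ref{th:rate_unbounded}), so your additional citation of~\cite{beck:2013} is superfluous but harmless.
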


\begin{proof}
Without loss of generality, we assume that $n>1$ (otherwise the feasible set is either empty or a singleton).
Let $\bar \jmath$ and $\hat k$ be the variable index and the outer iteration defined in Lemma~\ref{lemma:j_k_rate}, respectively.
Namely, $j(k) = \bar \jmath$ for all $k \ge \hat k$.
To simplify the notation, without loss of generality we assume that
\begin{equation}\label{proof_assumpt_j}
p^k_n = \bar \jmath, \quad \forall \, k \ge \hat k, \qquad \qquad \text{and} \qquad \qquad \bar \jmath = n.
\end{equation}
Let us consider the following variable transformation:
\begin{equation}\label{xy}
x_i =
\begin{cases}
y_i,                                             \quad & i \in \{1,\ldots,n\} \setminus \{\bar \jmath\}, \\
b - \displaystyle{\sum_{h \ne \bar \jmath} y_h}, \quad & i = \bar \jmath.
\end{cases}
\end{equation}
Equivalently, we can write
\begin{equation}\label{xy_M}
x = M y + w,
\end{equation}
where, recalling that $\bar \jmath = n$,
\begin{equation}\label{M_w}
M = \begin{bmatrix}
    1      & 0      & \ldots & 0 \\
    0      & 1      & \ldots & 0 \\
    \vdots & \vdots & \ddots & \vdots \\
    0      & 0      & \ldots & 1 \\
    -1     & -1     & \ldots & -1
    \end{bmatrix} \in \R^{n \times (n-1)}
\qquad \text{and} \qquad
w = \begin{bmatrix} 0 \\ 0 \\ \vdots \\ 0 \\ b \end{bmatrix} \in \R^n.
\end{equation}
Note that the columns of $M$ are linearly independent and, for every point $x \in \R^n$ such that $\sum_{i=1}^n x_i = b$,
the linear system $x = M y + w$ has a unique solution given by $y_i = x_i$, $i = 1,\ldots,n-1$.

Now, since $x^*_n \in (l_n,u_n)$ (by Lemma~\ref{lemma:j_k_rate}) and $f$ is strictly convex,
in problem~\eqref{prob} we can remove the bound constraints on the variable $x_n$ and
$x^*$ is still the unique optimal solution.
Consequently, by defining the function $\psi \colon \R^{n-1} \to \R$ as $\psi(y) := f\bigl(M y + w\bigr)$, we can recast problem~\eqref{prob} as follows:
\begin{equation}\label{prob_eq_box}
\begin{split}
& \min \, \psi(y) \\
& l_i \le y_i \le u_i, \quad i = 1,\ldots,n-1.
\end{split}
\end{equation}
We observe that, for every feasible point $y$ of problem~\eqref{prob_eq_box}, the corresponding $x$ obtained by~\eqref{xy_M}
satisfies both the equality constraint $\sum_{i=1}^n x_i = b$ and the bound constraints $l_i \le x_i \le u_i$, $i = 1,\ldots,n-1$
(the bound constraints on $x_n$ are ignored for the reasons explained above).

An optimal solution $y^*$ of problem~\eqref{prob_eq_box} exists, is unique and is given by $y^*_i = x^*_i$, $i = 1,\ldots,n-1$.
Indeed, this $y^*$ clearly satisfies $x^* = M y^* + w$ and $\psi(y^*) = f(x^*)$. So, if a feasible point $\tilde y \ne y^*$ of problem~\eqref{prob_eq_box} existed
such that $\psi(\tilde y) \le \psi(y^*)$, it would mean that $M \tilde y + w \ne M y^* + w$ (since the columns of $M$ are linearly independent)
and $f(M \tilde y + w) \le f(M y^* + w) = f(x^*)$. But this is not possible,
since $M \tilde y$ would be feasible for problem~\eqref{prob} after removing the bound constraints on $x_n$,
and we said above that $x^*$ is still the unique optimal solution of problem~\eqref{prob} even if we remove the bound constraints on $x_n$.

Now, for any $k \ge \hat k$ and for any $i = 1,\ldots,n$, let us define $y^{k,i}$ as the unique vector (feasible for problem~\eqref{prob_eq_box}) satisfying
\begin{equation}\label{zy}
z^{k,i} = M y^{k,i} + w,
\end{equation}
which is given by
\begin{equation}\label{zy2}
y^{k,i}_h = z^{k,i}_h, \quad h = 1,\ldots,n-1.
\end{equation}
For all $k \ge \hat k$, without loss of generality we can consider only the first $n-1$ inner iterations in AC2CD.
Indeed, by~\eqref{proof_assumpt_j} we are assuming that $p^k_n = \bar \jmath$ for all $k \ge \hat k$,
and then, no pair of coordinates is updated in the inner iteration $(k,n)$ for all $k \ge \hat k$ (see Remark~\ref{rem:inner_it}).
Let us consider any inner iteration $(k,i)$, with $k \ge \hat k$ and $i \in \{1,\ldots,n-1\}$.
We want to show that
\begin{equation}\label{z_alpha_y}
z^{k,i} + \alpha d^{k,i} =
M y  \left |_{\subalign{& y_{p^k_i} = y^{k,i}_{p^k_i} - \alpha \nabla_{p^k_i} \psi (y^{k,i}) \\ & y_h = y^{k,i}_h, \, h = 1,\ldots,n-1, \, h \ne p^k_i}} \right.+ w,
\qquad \forall \, \alpha \in \R.
\end{equation}
In other words,~\eqref{z_alpha_y} says that moving $z^{k,i}$ along $d^{k,i}$ with a certain stepsize $\alpha$ corresponds, in the $y$ space, to
moving the $p^k_i$th coordinate of $y^{k,i}$ along $-\nabla_{p^k_i} \psi (y^{k,i})$ with the same stepsize $\alpha$, keeping all the other coordinates of $y^{k,i}$ fixed.
To prove that~\eqref{z_alpha_y} holds, we use again the fact that, for all $\alpha \in \R$, the linear system $z^{k,i} + \alpha d^{k,i} = M y + w$
has a unique solution $y$ given by $y_h = z^{k,i}_h + \alpha d^{k,i}_h$, $h = 1,\ldots,n-1$. Then,~\eqref{z_alpha_y} is equivalent to writing
\begin{subequations}\label{z_alpha_y_2}
\begin{align}
z^{k,i}_{p^k_i} + \alpha d^{k,i}_{p^k_i} & = y^{k,i}_{p^k_i} - \alpha \nabla_{p^k_i} \psi(y^{k,i}), \label{z_alpha_y_2_1} \\
z^{k,i}_h + \alpha d^{k,i}_h             & = y^{k,i}_h, \quad h = 1,\ldots,n-1, \quad h \ne p^k_i. \label{z_alpha_y_2_2}
\end{align}
\end{subequations}
Since $d^{k,i}_h = 0$ for all $h \notin \{p^k_i, \bar \jmath\}$, and we are assuming that $\bar \jmath = n$,~\eqref{z_alpha_y_2_2} immediately follows from~\eqref{zy2}.
To obtain~\eqref{z_alpha_y_2_1}, we use~\eqref{zy2} again to write $y^{k,i}_{p^k_i} = z^{k,i}_{p^k_i}$. Thus, we only need to show that
$d^{k,i}_{p^k_i} = -\nabla_{p^k_i} \psi(y^{k,i})$. This follows from the relation
$\nabla \psi(y) = M^T \nabla f(My + w)$, that, combined with~\eqref{zy} and the definition of $M$ given in~\eqref{M_w}, yields to
\[
\begin{split}
\nabla_{p^k_i} \psi(y^{k,i}) & = \nabla_{p^k_i} f(M y^{k,i} + w) - \nabla_{\bar \jmath} f(M y^{k,i} + w) \\
                             & = \nabla_{p^k_i} f(z^{k,i}) - \nabla_{\bar \jmath} f(z^{k,i}) = -d^{k,i}_{p^k_i}.
\end{split}
\]
Therefore,~\eqref{z_alpha_y_2} holds, implying that~\eqref{z_alpha_y} holds too.
Using $\alpha = \alpha^{k,i}$ in~\eqref{z_alpha_y_2}, for all $k \ge \hat k$ we can write
\begin{align*}
z^{k,i+1}_{p^k_i} & = y^{k,i}_{p^k_i} - \alpha^{k,i} \nabla_{p^k_i} \psi(y^{k,i}), & \quad i = 1,\ldots,n-1, \\
z^{k,i+1}_h       & = y^{k,i}_h, \quad h = 1,\ldots,n-1, \quad h \ne p^k_i,        & \quad i = 1,\ldots,n-1.
\end{align*}
Using~\eqref{zy2} (with $i$ replaced by $i+1$) in the above relations, for all $k \ge \hat k$ we obtain
\begin{subequations}\label{coord_grad}
\begin{align}
y^{k,i+1}_{p^k_i} & = y^{k,i}_{p^k_i} - \alpha^{k,i} \nabla_{p^k_i} \psi(y^{k,i}), & \quad i = 1,\ldots,n-1, \\
y^{k,i+1}_h       & = y^{k,i}_h, \quad h = 1,\ldots,n-1, \quad h \ne p^k_i,        & \quad i = 1,\ldots,n-1.
\end{align}
\end{subequations}
We see that, for all $k \ge \hat k$, the vectors $y^{k,1}, \ldots, y^{k,n}$ are the same that would be generated by
a coordinate descent algorithm applied to problem~\eqref{prob_eq_box}.
In particular, for all $k \ge \hat k$, every $y^{k+1,1}$ is obtained from $y^{k,1}$ by
selecting one coordinate at a time by a Gauss-Seidel (or cyclic) rule and moving it with a certain stepsize.

Now we want to show that, for all sufficiently large $k$, each stepsize $\alpha^{k,i}$ is the same that would be obtained
by performing an exact line search in the updating scheme~\eqref{coord_grad}, applied to problem~\eqref{prob_eq_box}.
First observe that there exists $\tilde k \ge \hat k$ such that $z^{k,i}_{\bar \jmath} \in (l_{\bar \jmath},u_{\bar \jmath})$, $i = 1,\ldots,n$,
for all $k \ge \tilde k$, as $x^*_{\bar \jmath} \in (l_{\bar \jmath},u_{\bar \jmath})$ and $\{z^{k,i}\} \to x^*$, $i = 1,\ldots,n$.
Therefore, since an exact line search is used in AC2CD, for all $i = 1,\ldots,n$ we can write
\begin{equation}\label{proof_rate_exact_stepsize}
\alpha^{k,i} \in \argmin_{\alpha \in \R} \, \{f(z^{k,i} + \alpha d^{k,i}) \, \colon \, z^{k,i}_{p^k_i} + \alpha d^{k,i}_{p^k_i} \in [l_{p^k_i}, u_{p^k_i}]\},
\quad \forall \, k \ge \tilde k.
\end{equation}
In other words, for all $k \ge \tilde k$, the constraints $z^{k,i}_{\bar \jmath} + \alpha d^{k,i}_{\bar \jmath} \in [l_{\bar \jmath}, u_{\bar \jmath}]$
with respect to $\alpha$ are not necessary for the computation of the stepsize in AC2CD.
Combining~\eqref{proof_rate_exact_stepsize} with~\eqref{z_alpha_y}, we get that, for all $k \ge \tilde k$,
each $\alpha^{k,i}$ is the optimal solution of the following one-dimensional problem:
\begin{equation}\label{ls_eq}
\begin{split}
& \min_{\alpha \in \R} \, \psi(y) \\
& y_{p^k_i} = y^{k,i}_{p^k_i} - \alpha \nabla_{p^k_i} \psi(y^{k,i}) \\
& y_h = y^{k,i}_h, \quad h = 1,\ldots,n-1, \quad h \ne p^k_i, \\
& l_{p^k_i} \le y^{k,i}_{p^k_i} - \alpha \nabla_{p^k_i} \psi(y^{k,i}) \le u_{p^k_i}.
\end{split}
\end{equation}
In particular, the last bound constraints in~\eqref{ls_eq} follow from those in~\eqref{proof_rate_exact_stepsize}, using~\eqref{z_alpha_y_2_1}.
Therefore, for all $k \ge \tilde k$, each $\alpha^{k,i}$ is the stepsize that would be obtained
by performing an exact line search in the coordinate descent scheme~\eqref{coord_grad}, applied to problem~\eqref{prob_eq_box}.

So, according to the results established by Luo and Tseng~\cite{luo:1992},
eventually $\{\psi(y^{k,1})\}$ converges linearly to the optimal value of problem~\eqref{prob_eq_box} if the following three conditions hold:
\begin{enumerate*}[label=(\roman*)]
\item $\psi$ is strictly convex twice continuously differentiable over $\R^{n-1}$,
\item the optimal solution $y^*$ of problem~\eqref{prob_eq_box} exists,
\item $\nabla^2 \psi(y^*) \succ 0$.
\end{enumerate*}
The second point has already been proved before, while the other two points follow
by combining Assumption~\ref{assumpt:rate} with the fact that the columns of $M$ are linearly independent and $x^* = My^* + w$.
We conclude that a real number $C \in [0,1)$ and an outer iteration $\bar k \ge \tilde k$ exist such that
\[
\psi(y^{k+1,1})-\psi(y^*) \le C [\psi(y^{k,1})-\psi(y^*)], \quad \forall \, k \ge \bar k.
\]
Since $\psi(y^*) = f(x^*)$, $\psi(y^{k,1}) = f(z^{k,1}) = f(x^k)$ and $\psi(y^{k+1,1}) = f(z^{k+1,1}) = f(x^{k+1})$, we get the result.
\end{proof}

Now, we focus on the case where there are no bounds on the variables, i.e., $l_i = -\infty$ and $u_i = \infty$ for all $i = 1,\ldots,n$.
In this case, a non-asymptotic linear convergence rate can be achieved by using
the stepsize defined in Proposition~\ref{prop:lips_stepsize} (with $\gamma = 1/2$).
To obtain this result, we need the following assumption.

\begin{assumption}\label{assumpt:rate_unbounded}
It holds that
\begin{itemize}
\item $f$ is strongly convex over $\R^n$ with constant $\mu$;
\item $\nabla f$ is Lipschitz continuous over $\R^n$ with constant $L$;
\item the optimal solution of problem~\eqref{prob}, denoted by $x^*$, exists.
\end{itemize}
\end{assumption}

Also in this case, we need to maintain the index $j(k)$ fixed throughout the algorithm.
In particular, now we require $j(k)$ to be equal to any index $\bar \jmath \in \{1,\ldots,n\}$ for all $k \ge 0$
(note that any choice of $\bar \jmath$ is acceptable, since there are no bounds on the variables).

\begin{theorem}\label{th:rate_unbounded}
Let us assume that there are no bounds on the variables in problem~\eqref{prob}, i.e., $l_i = -\infty$ and $u_i = \infty$ for all $i = 1,\ldots,n$,
and let Assumption~\ref{assumpt:rate_unbounded} hold.
Given any index $\bar \jmath \in \{1,\ldots,n\}$, let $\{x^k\}$ be a sequence of points produced by AC2CD, where $j(k) = \bar \jmath$ for all $k \ge 0$
and the stepsize is computed as indicated in Proposition~\ref{prop:lips_stepsize}, with $\gamma = 1/2$.

Then, a real number $C \in [0,1)$ exists such that
\begin{equation}\label{rate_unbounded_1}
f(x^{k+1})-f(x^*) \le C [f(x^k)-f(x^*)], \quad \forall \, k \ge 0.
\end{equation}

In particular, considering the constants $L_{i,j}$ defined in Lemma~\ref{lemma:beck} and the constants $\bar L_{i,j}$ used for the stepsize computation, we have
\begin{equation}\label{rate_unbounded_2}
C = 1 - \frac{\mu}{2 \bar L^{\text{max}}_{\bar \jmath} \Bigl[1 + (n-1) (\sum_{i \ne \bar \jmath} L_{i,\bar \jmath})^2 / (\bar L^{\text{min}}_{\bar \jmath})^2\Bigr]},
\end{equation}
where
$\displaystyle{\bar L^{\text{min}}_{\bar \jmath} := \min_{i \ne \bar \jmath} \bar L_{i,\bar \jmath}}\,$ and
$\displaystyle{\,\bar L^{\text{max}}_{\bar \jmath} := \max_{i \ne \bar \jmath} \bar L_{i,\bar \jmath}}$.
\end{theorem}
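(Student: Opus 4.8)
The plan is to reuse the change of variables and the algorithmic correspondence developed in the proof of Theorem~\ref{th:rate}, and then invoke the \emph{non-asymptotic} linear rate for cyclic coordinate gradient descent due to Beck and Tetruashvili~\cite{beck:2013} in place of the asymptotic Luo--Tseng result. Since $j(k) = \bar \jmath$ is fixed for all $k \ge 0$ by hypothesis, I would take without loss of generality $\bar \jmath = n$, define $M \in \R^{n \times (n-1)}$ and $w \in \R^n$ as in~\eqref{M_w}, and set $\psi(y) := f(My + w)$. Because there are no bounds, problem~\eqref{prob} is equivalent to the \emph{unconstrained} problem $\min_{y \in \R^{n-1}} \psi(y)$, so the argument running from~\eqref{zy} through~\eqref{coord_grad} now applies verbatim starting from the very first iteration (no threshold $\hat k$ is needed, and no bound-removal argument is required): the iterates $y^{k,i}$ defined by $z^{k,i} = M y^{k,i} + w$ are exactly those generated by cyclic coordinate descent on $\psi$.

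Next I would pin down the stepsize. With $\gamma = 1/2$ and $l_i = -\infty$, $u_i = \infty$ we have $\bar \alpha^{k,i} = \infty$, so the rule~\eqref{lips_stepsize} collapses to $\alpha^{k,i} = 1/\bar L_{p^k_i,\bar \jmath}$; combined with~\eqref{coord_grad} this shows each inner step is the coordinate gradient step $y^{k,i+1}_{p^k_i} = y^{k,i}_{p^k_i} - (1/\bar L_{p^k_i,\bar \jmath})\,\nabla_{p^k_i}\psi(y^{k,i})$. Thus AC2CD coincides with cyclic coordinate gradient descent on $\psi$ using step $1/\bar L_{i,\bar \jmath}$ in coordinate $i$, where $\bar L_{i,\bar \jmath}$ overestimates the coordinate Lipschitz constant $L_{i,\bar \jmath}$ of $\psi$; this identification is legitimate because the descent inequality~\eqref{phi_upp} holds for every $\bar L_{i,\bar \jmath} \ge L_{i,\bar \jmath}$.

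Then I would establish the three constants that feed the Beck--Tetruashvili rate. For strong convexity I would use the first-order characterization directly, avoiding any need for twice differentiability: for all $y,y'$, $(\nabla \psi(y) - \nabla \psi(y'))^T(y-y') = (\nabla f(x) - \nabla f(x'))^T(x-x') \ge \mu \norm{x-x'}^2 = \mu \norm{M(y-y')}^2 \ge \mu\,\lambda_{\min}(M^T M)\,\norm{y-y'}^2$, and since $M^T M = I_{n-1} + \mathbf{1}\,\mathbf{1}^T$ has smallest eigenvalue $1$, $\psi$ is $\mu$-strongly convex. The coordinate Lipschitz constant of $\psi$ along coordinate $i$ equals $L_{i,\bar \jmath}$ (varying $y_i$ moves $x$ along $e_i - e_{\bar \jmath}$, cf.\ Lemma~\ref{lemma:beck}), so $\bar L_{i,\bar \jmath}$ are valid overestimates and $\bar L^{\text{min}}_{\bar \jmath}, \bar L^{\text{max}}_{\bar \jmath}$ play the roles of the minimal and maximal block constants. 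Finally, for the global Lipschitz constant of $\nabla \psi$ I would bound it by the coordinate constants, $L_\psi \le \sum_{i\ne\bar \jmath} L_{i,\bar \jmath}$ (via the trace of the Hessian when $f$ is $C^2$, or the corresponding first-order bound for convex functions), which supplies the $L$ appearing in the rate.

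With $p = n-1$ blocks, strong convexity $\mu$, step/block constants $\bar L_{i,\bar \jmath}$, and global Lipschitz bound $\sum_{i\ne\bar \jmath} L_{i,\bar \jmath}$, substituting into the cyclic coordinate-gradient-descent linear-rate theorem of~\cite{beck:2013}, whose constant has the form $1 - \sigma/[2 L_{\text{max}}(1 + p\,L^2/L_{\text{min}}^2)]$, yields $\psi(y^{k+1,1}) - \psi(y^*) \le C[\psi(y^{k,1}) - \psi(y^*)]$ for \emph{all} $k \ge 0$, with $C$ precisely as in~\eqref{rate_unbounded_2}. Since $\psi(y^{k,1}) = f(z^{k,1}) = f(x^k)$ and $\psi(y^*) = f(x^*)$, this is~\eqref{rate_unbounded_1}. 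I expect the main obstacle to be matching the bookkeeping exactly to the Beck--Tetruashvili statement: in particular, making sure that the overestimated step constants $\bar L_{i,\bar \jmath}$ (which govern the per-iteration decrease, hence the $L_{\text{max}}, L_{\text{min}}$ slots) and the \emph{true} global constant $\sum_{i\ne\bar \jmath} L_{i,\bar \jmath}$ (an intrinsic property of $\psi$, hence the $L$ slot) enter their formula in the correct positions, so that the rate comes out as~\eqref{rate_unbounded_2} with no spurious factors.
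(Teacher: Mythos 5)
Your proposal is correct and matches the paper's proof essentially step for step: the same transformation $x = My + w$ with $\bar \jmath = n$, the identification of AC2CD with cyclic coordinate gradient descent on $\psi$ with steps $1/\bar L_{p^k_i,\bar \jmath}$ valid from $k=0$ (since $\bar \alpha^{k,i} = \infty$), the coordinatewise constants $L_{i,\bar \jmath}$ via Lemma~\ref{lemma:beck}, the bound $L^{\psi} \le \sum_{i \ne \bar \jmath} L_{i,\bar \jmath}$, $\mu$-strong convexity of $\psi$ from $\lambda_{\text{min}}(M^T M) = 1$, and Theorem~3.9 of~\cite{beck:2013} with the overestimates $\bar L_{i,\bar \jmath}$ in the $L_{\text{max}}$, $L_{\text{min}}$ slots. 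One minor caution: drop your parenthetical Hessian-trace option for bounding $L^{\psi}$, since Assumption~\ref{assumpt:rate_unbounded} does not grant twice differentiability; your first-order alternative (the paper invokes Lemma~2 of~\cite{nesterov:2012}) is the route that works.
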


\begin{proof}
Following the same line of arguments as in the proof of Theorem~\ref{th:rate},
without loss of generality we assume that $n>1$ and that~\eqref{proof_assumpt_j} holds for all $k \ge 0$.
Then, we consider the variable transformation $x = My + w$ given in~\eqref{xy}, \eqref{xy_M} and~\eqref{M_w}
to define the function $\psi \colon \R^{n-1} \to \R$ as $\psi(y) := f\bigl(M y + w\bigr)$.
We obtain that problem~\eqref{prob} is equivalent to the following problem (which is now unconstrained since there are no bounds on the variables):
\begin{equation}\label{prob_eq_unc}
\min_{y \in \R^{n-1}} \, \psi(y).
\end{equation}
An optimal solution $y^*$ of problem~\eqref{prob_eq_unc} exists, is unique and is given by $y^*_i = x^*_i$, $i = 1,\ldots,n-1$,
satisfying $x^* = M y^* + w$ and $\psi(y^*) = f(x^*)$.
Moreover, for any $k \ge 0$ and for any $i = 1,\ldots,n$, let us define $y^{k,i}$ as in~\mbox{\eqref{zy}--\eqref{zy2}}.
We have that~\eqref{coord_grad} holds for all $k \ge 0$.
Namely, for all $k \ge 0$, every $y^{k+1,1}$ is obtained from $y^{k,1}$ by
selecting one coordinate at a time by a Gauss-Seidel (or cyclic) rule and moving it with a certain stepsize.

Now, let us consider the stepsize $\alpha^{k,i}$.
By hypothesis, it is computed as described in Proposition~\ref{prop:lips_stepsize}, with $\gamma = 1/2$,
using the constants $L_{i,j}$ and $\bar L_{i,j}$ as they are defined in Lemma~\ref{lemma:beck} and in~\eqref{lips_overest}, respectively.
Since there are no bounds on the variables, for every $d^{k,i} \ne 0$ we have $\bar \alpha^{k,i} = \infty$. Then,
\[
\alpha^{k,i} =
\begin{cases}
1 / \bar L_{p^k_i,\bar \jmath} \quad & \text{if } d^{k,i} \ne 0, \\
0,                                   & \text{otherwise}.
\end{cases}
\]
We want to show that $L_{1,\bar \jmath}, \ldots, L_{n-1,\bar \jmath}$ are the coordinatewise Lipschitz constants of $\nabla \psi$ over $\R^{n-1}$.
Namely, for every $i = 1,\ldots,n-1$ and for all $y \in \R^{n-1}$, we want to show that
\begin{equation}\label{block_lips}
\bigl|\nabla_i \psi\bigl(y(\alpha)\bigr) - \nabla_i \psi(y)\bigr| \le L_{i,\bar \jmath} \abs{\alpha}, \quad \forall \, \alpha \in \R,
\end{equation}
where $y(\alpha) \in \R^{n-1}$ is defined as follows:
\[
\begin{split}
y(\alpha)_i & = y_i + \alpha, \\
y(\alpha)_h & = y_h, \quad h = 1,\ldots,n-1, \quad h \ne i.
\end{split}
\]
Recalling that $\bar \jmath = n$ and the definition of $M$ given in~\eqref{M_w}, first observe that
$\nabla_i \psi(y) = \nabla f\bigl(M y + w\bigr)^T (e_i - e_{\bar \jmath})$ and
$\nabla_i \psi\bigl(y(\alpha)\bigr) = \nabla f\bigl(M y(\alpha) + w\bigr)^T (e_i - e_{\bar \jmath}) =
\nabla f\bigl(M y + w + \alpha (e_i - e_{\bar \jmath})\bigr)^T (e_i - e_{\bar \jmath})$.
Thus,
\[
\begin{split}
& \bigl|\nabla_i \psi\bigl(y(\alpha)\bigr) - \nabla_i \psi(y)\bigr| = \\
& \bigl|\nabla f\bigl(M y + w + \alpha (e_i - e_{\bar \jmath})\bigr)^T (e_i - e_{\bar \jmath}) - \nabla f(M y + w)^T (e_i-e_{\bar \jmath})\bigr|.
\end{split}
\]
Considering the functions $\phi_{i,j,z}$ defined in Lemma~\ref{lemma:beck}, we have that
$\dot \phi_{i,j,z}(\alpha) = \nabla f\bigl(z+\alpha(e_i-e_j)\bigr)^T (e_i - e_j)$.
Therefore, from Lemma~\ref{lemma:beck} we get
\[
\bigl|\nabla_i \psi\bigl(y(\alpha)\bigr) - \nabla_i \psi(y)\bigr| =
\abs{\dot \phi_{i,\bar \jmath,My+w}(\alpha) - \dot \phi_{i,\bar \jmath,My+w}(0)} \le L_{i,\bar \jmath} \abs{\alpha}
\]
and then~\eqref{block_lips} holds.

So, for all $k \ge 0$ and all $i=1,\ldots,n-1$ such that $d^{k,i} \ne 0$, each stepsize $\alpha^{k,i}$ is
the reciprocal of an overestimate of $L_{p^k_i, \bar \jmath}$, where $L_{p^k_i, \bar \jmath}$ is
the $p^k_i$th coordinatewise Lipschitz constant of $\nabla \psi$.
According to the results stated by Beck and Tetruashvili~\cite{beck:2013} (see Theorem 3.9 in~\cite{beck:2013}),
$\{\psi(y^{k,1})\}$ has a non-asymptotic linear convergence rate if the following three conditions hold:
\begin{enumerate*}[label=(\roman*)]
\item $\psi$ is strongly convex over $\R^{n-1}$,
\item $\nabla \psi$ is Lipschitz continuous over $\R^{n-1}$,
\item the optimal solution $y^*$ of problem~\eqref{prob_eq_unc} exists.
\end{enumerate*}
The third point has already been proved before.
The first point follows from Assumption~\ref{assumpt:rate} and the fact that the columns of $M$ are linearly independent.
The second point follows from the fact that $L_{1,\bar \jmath}, \ldots, L_{n-1,\bar \jmath}$ are the coordinatewise Lipschitz constants of $\nabla \psi$ over $\R^{n-1}$.
In particular, using known results (see Lemma~2 in~\cite{nesterov:2012}) and the fact that $\bar \jmath = n$, we can write
\begin{equation}\label{lips_psi}
\begin{split}
\norm{\nabla \psi(y') - \nabla \psi(y'')} & \le \sum_{i=1}^{n-1} L_{i,\bar \jmath} \, \norm{y'-y''} \\
                                          & = \sum_{i \ne \bar \jmath} L_{i,\bar \jmath} \, \norm{y'-y''}, \quad \forall \, y',y'' \in \R^{n-1}.
\end{split}
\end{equation}
Therefore, a real number $C \in [0,1)$ exists such that
\begin{equation}\label{rate_psi}
\psi(y^{k+1,1})-\psi(y^*) \le C [\psi(y^{k,1})-\psi(y^*)], \quad \forall \, k \ge 0.
\end{equation}
Then, we get~\eqref{rate_unbounded_1} by using the fact that
$\psi(y^*) = f(x^*)$, $\psi(y^{k,1}) = f(z^{k,1}) = f(x^k)$ and $\psi(y^{k+1,1}) = f(z^{k+1,1}) = f(x^{k+1})$.

Now, we prove~\eqref{rate_unbounded_2}.
First observe that, by Theorem 3.9 in~\cite{beck:2013}, the constant $C$ appearing in~\eqref{rate_psi} is given by
\[
C = 1 - \frac{\mu^{\psi}}{2 \bar L^{\text{max}}_{\bar \jmath} \Bigl[1 + (n-1) (L^{\psi})^2 / (\bar L^{\text{min}}_{\bar \jmath})^2\Bigr]},
\]
where $\mu^{\psi}$ and $L^{\psi}$ are the strong convexity constant of $\psi$ over $\R^{n-1}$
and the Lipschitz constant of $\nabla \psi$ over $\R^{n-1}$, respectively,
while $\bar L^{\text{min}}_{\bar \jmath}$ and $\bar L^{\text{max}}_{\bar \jmath}$ are defined as in the assertion of the theorem.
Using~\eqref{lips_psi}, we can upper bound $L^{\psi}$ with $\sum_{i \ne \bar \jmath} L_{i,\bar \jmath}$.
Therefore, to obtain~\eqref{rate_unbounded_2}, we only have to show that
\begin{equation}\label{proof_strong_conv_constant}
\mu^{\psi} = \mu,
\end{equation}
i.e., we have to show that $\psi$ is strongly convex over $\R^{n-1}$ with constant $\mu$.
Using the fact that $f$ is strongly convex over $\R^n$ with constant $\mu$,
for any $y', y'' \in \R^{n-1}$ and for all $\theta \in [0,1]$ we can write
\[
\begin{split}
\psi(\theta y' + (1-\theta) y'') & = f\bigl(M (\theta y' + (1-\theta) y'') + w\bigr) \\
                                   & = f\bigl(\theta (M y' + w) + (1-\theta) (M y'' + w) \bigr) \\
                                   & \le \theta f (M y' + w) + (1-\theta) f (M y'' + w) + \\
                                   & \quad \, - \frac{\mu}2 \theta (1-\theta) \norm{M y' + w - M y'' - w}^2 \\
                                   & = \theta \psi (y') + (1-\theta) \psi (y'') - \frac{\mu}2 \theta (1-\theta) \norm{M (y'-y'')}^2.
\end{split}
\]
Denoting by $\lambda_{\text{min}}(M^T M)$ the smallest eigenvalue of $M^T M$, we also have
$\norm{M (y'-y'')}^2 = (y'-y'')^T M^T M (y'-y'') \ge \lambda_{\text{min}} (M^T M) \norm{y' - y''}^2$, and then,
\begin{equation}\label{proof_strong_conv}
\psi(\theta y' + (1-\theta) y'') \le \theta \psi (y') + (1-\theta) \psi (y'') - \frac{\mu}2 \lambda_{\text{min}}(M^T M) \theta (1-\theta) \norm{y'-y''}^2.
\end{equation}
Note that $M^T M$ has all entries equal to $1$, except for those on the diagonal that are equal to $2$. Namely,
\[
M^T M = I_{(n-1) \times (n-1)} + 1_{(n-1) \times (n-1)},
\]
where $I_{(n-1) \times (n-1)}$ is the $(n-1)$ dimensional identity matrix and $1_{(n-1) \times (n-1)}$ is the $(n-1) \times (n-1)$ matrix made of all ones.
It follows that $\lambda_{\text{min}}(M^T M) = 1 + 0 = 1$, that, combined with~\eqref{proof_strong_conv},
implies that $\psi$ is strongly convex over $\R^{n-1}$ with constant $\mu$.
Then,~\eqref{proof_strong_conv_constant} holds and the result is obtained.
\end{proof}

Let us conclude this section by discussing the relation between AC2CD and the classical coordinate descent method.
As appears from the proofs of Theorem~\ref{th:rate} and Theorem~\ref{th:rate_unbounded}, this relation is crucial to obtain
linear convergence rate of AC2CD and it also provides further insight into the proposed method.

Indeed, for every $k \ge 0$, a cycle of inner iterations $(k,1), \ldots, (k,n)$ in AC2CD can be seen as a cycle of
iterations of the classical coordinate descent method, with cyclic selection rule, applied to an equivalent transformed problem.
In particular, for every $k \ge 0$ we can use the variable transformation given in~\eqref{xy}, \eqref{xy_M} and~\eqref{M_w}, with $\bar \jmath$ replaced by $j(k)$,
to define the function $\psi(y) := f\bigl(M y + w\bigr)$ and recast~\eqref{prob} as an equivalent problem.
Note that, in absence of any information on $j(k)$, the resulting equivalent transformed problem
\begin{itemize}
\item can change from $k$ to $k+1$, as $j(k)$ can change;
\item has bound constraints $l_i \le y_i \le u_i$, $i \in \{1,\ldots,n\} \setminus \{j(k)\}$, plus the constraints
    $l_{j(k)} \le b - \sum_{i \ne j(k)} y_i \le u_{j(k)}$, where the latter follow from the constraints $l_{j(k)} \le x_{j(k)} \le u_{j(k)}$ in~\eqref{prob}.
\end{itemize}
Then, in the proofs of Theorem~\ref{th:rate} and Theorem~\ref{th:rate_unbounded}, we obtain a linear convergence rate
by exploiting the rule used to compute $j(k)$ that, together with other standard assumptions, guarantees that $j(k) = \bar \jmath$ for all $k \ge \hat k$
and $x^*_{\bar \jmath} \in (l_{\bar \jmath},u_{\bar \jmath})$ (for problems with no bounds on the variables, we have $\hat k = 0$).
In this way, for all sufficiently large $k$, the equivalent transformed problem remains the same and
the constraints $l_{j(k)} \le b - \sum_{i \ne j(k)} y_i \le u_{j(k)}$ can be ignored.
It also follows that quickly identifying the index $\bar \jmath$ in problems with finite bounds on the variables may benefit the algorithm.
Then, it may be useful combining AC2CD with some strategies to
predict which variables are at the bounds in the final solution, such es, e.g., those proposed in~\cite{joachims:1999,chang:2011,raj:2016,cristofari:2018}.

Finally, the approach of transforming~\eqref{prob} into an equivalent simply constrained problem
has some connections with the method proposed by Bertsekas in~\cite{bertsekas:1982} for solving problems with linear inequality constraints.
In particular, at every iteration, the algorithm proposed in~\cite{bertsekas:1982} selects a subset of
$n$ indices that comprises all those corresponding to the binding constraints.
Then, assuming linear independence of these constraints, a linear variable transformation is used to obtain a new problem
with a first block of box constraints, plus a second block of general linear inequality constraints which are not binding at the current point
and can be ignored for the computation of the search direction, obtained by a Newton strategy.
So, the main difference between AC2CD and the method proposed in~\cite{bertsekas:1982} is that,
for every $k \ge 0$, we perform a cycle of inner iterations to update one coordinate at a time in the equivalent transformed problem.

\section{Numerical results}\label{sec:res}
In this section, we present the numerical results of AC2CD on some structured problems where
the computation of the partial derivatives of the objective function is cheap with respect to the computation of the whole gradient.
The codes were written in Matlab (version R2017b) and the experiments were run on an Intel(R) Core(TM) i7-7500U with $16$ GB RAM memory.

First, we considered the Chebyshev center problem and the linear Support Vector Machine (SVM) training problem,
which both can be written as
\begin{equation}\label{convex_quad_prob}
\begin{split}
& \min \, f(x) = \frac 1 2 x^T Q^T Q x - q^T x, \\
& \sum_{i=1}^n x_i = 1 \\
& l_i \le x_i \le u_i, \quad i = 1,\ldots,n,
\end{split}
\end{equation}
for some $Q \in \R^{m \times n}$, $q \in \R^n$, $l_i \in \R \cup \{-\infty\}$ and $u_i \in \R \cup \{\infty\}$, $i = 1,\ldots,n$.
On these convex quadratic problems, we compared AC2CD with the following block decomposition methods:
\begin{itemize}
\item Random Coordinate Descent (RCD)~\cite{necoara:2014}, which, at every iteration, randomly selects two distinct variables
    from a given probability distribution and updates them by minimizing a quadratic model of the objective function;
\item Maximal Violating Pair (MVP)~\cite{platt:1999,fan:2005}, which, at every iteration, selects the two variables that most
    violate the stationarity conditions~\eqref{stat} and moves them by using an appropriate stepsize that, in our case, is computed by an exact line search.
\end{itemize}

Note that RCD, like AC2CD, does not need to compute the whole gradient of the objective function for choosing the working set.
On the other hand, MVP exploits a Gauss-Southwell (or greedy) strategy which requires to calculate the whole vector $\nabla f(x^k)$ at each iteration $k$.

Then, we focused on problems with strongly convex objective function and no bounds on the variables,
for which non-asymptotic linear convergence rate of AC2CD has been proved in Section~\ref{sec:rate}.
In this case, we compared our algorithm with two versions of RCD that were proposed in~\cite{necoara:2017} for problems of the following form:
\begin{equation}\label{unbound_prob}
\begin{split}
& \min \, f(x) = \sum_{i=1}^n f_i(x_i), \\
& \sum_{i=1}^n x_i = 0,
\end{split}
\end{equation}
where each $f_i \colon \R \to \R$ is convex and has a Lipschitz continuous derivative with constant $L_i > 0$.
In our experiments, we considered the same test problems used in~\cite{necoara:2017,xiao:2006},
which are of the form of~\eqref{unbound_prob}, with strongly convex $f_i$, $i = 1,\ldots,n$.

Lastly, we considered the following non-convex problems:
\begin{equation}\label{non_convex_prob}
\begin{split}
& \min \, f(x) = \frac 1 2 x^T Q^T D Q x - q^T x, \\
& \sum_{i=1}^n x_i = 1 \\
& x_i \ge 0, \quad i = 1,\ldots,n,
\end{split}
\end{equation}
for some $Q \in \R^{m \times n}$, $D \in \R^{m \times m}$ and $q \in \R^n$, choosing $D$ as a diagonal matrix so that $Q^T D Q$ is indefinite.
On these problems, AC2CD is still compared with RCD (whose analysis for the non-convex case can be found in~\cite{patrascu:2015})
and MVP with exact line search (whose analysis for non-convex problems over the unit simplex can be found in~\cite{bomze:2018},
just observing that, for this class of problems, MVP coincides with the so called pairwise Frank-Wolfe method).

Finally, to have a fair comparison between AC2CD and RCD, in the following results we consider an outer iteration of RCD
as made of $n$ inner iterations, each of them involving a minimization step with respect to a pair of coordinates.

\subsection{Implementation issues}
In this subsection, we describe some implementation details concerning
\begin{itemize}
\item the computation of $\{p^k_1,\ldots,p^k_n\}$ at step~3 of Algorithm~\ref{alg:ac2cd},
\item the computation of the derivatives of the objective function in each algorithm,
\item the termination criterion used in each algorithm.
\end{itemize}

As regards the first point, a permutation $\{p^k_1,\ldots,p^k_n\}$ of $\{1,\ldots,n\}$
was randomly computed at the beginning of every outer iteration $k$ of AC2CD
(it is known that, in coordinate descent methods with cyclic selection rules,
periodically shuffling the ordering of the variables may speed up convergence in practice~\cite{chang:2008,wright:2015}).

For what concerns the derivatives computation, this operation is straightforward for problem~\eqref{unbound_prob}, given the separable structure of the objective function.
More attention is needed for problems~\eqref{convex_quad_prob} and~\eqref{non_convex_prob}, since,
due to the possibly excessive dimension of $Q$, in these problems the Hessian matrices $Q^T Q$ and $Q^T D Q$ were not stored.
For what concerns problems~\eqref{convex_quad_prob}, using some ideas from~\cite{hsieh:2008,necoara:2014}
we introduced the vector $r(x) := Qx$, updating it during the iterations of each considered algorithm, so that
\begin{equation}\label{part_deriv}
\nabla_i f(x) = {Q_i}^T  r(x) - q_i, \quad i = 1,\ldots,n,
\end{equation}
where $Q_i$ is the $i$th column of $Q$.
We see that computing any partial derivative has a cost $\mathcal O(m)$ and we have an extra cost $\mathcal O(2m)$
to update the vector $r(x)$ (since the considered algorithms can move two variables at a time).
These costs can also reduce when $Q$ is sparse. Similarly, for what concerns problems~\eqref{non_convex_prob}, we introduced the vector $r(x) := DQx$
and~\eqref{part_deriv} still holds (in this case we have a cost $\mathcal O(3m)$ to update $r(x)$, due to the presence of the diagonal matrix $D$).

It is worth observing that, in AC2CD and RCD,
both variables included in the working set at the beginning of an inner iteration may be at the lower or at the upper bound.
In this case, the minimization step is not performed in practice (since the two variables cannot be moved),
and then, the partial derivatives of the objective function do not need to be computed.
So, by inserting a simple check in the scheme of AC2CD and RCD,
we avoided to compute the partial derivatives of the objective function when not necessary.

For what concerns the termination criterion used in AC2CD, let us first rewrite the stationarity conditions~\eqref{stat} in an equivalent form:
\[
\min_{i \colon x_i<u_i} \{\nabla_i f(x)\} - \max_{i \colon x_i>l_i} \{\nabla_i f(x)\} \ge 0.
\]
Exploiting some ideas from~\cite{hsieh:2008}, the termination criterion used in AC2CD tries to approximately satisfy
the above relation without the need of computing the whole gradient of the objective function, in order to preserve efficiency.
To this extent, at the beginning of every outer iteration $k$ we first set $G^k_{\text{min}} = \infty$ and $G^k_{\text{max}} = -\infty$.
Then, at every inner iteration $(k,i)$ we update $G^k_{\text{min}}$ and $G^k_{\text{max}}$ as follows:
for each variable $z^{k,i}_h$ included in the working set, if $z^{k,i}_h < u_h$ then we update $G^k_{\text{min}} \gets \min\{G^k_{\text{min}},\nabla_h f(z^{k,i})\}$,
and if $z^{k,i}_h > l_h$ then we update $G^k_{\text{max}} \gets \max\{G^k_{\text{max}},\nabla_h f(z^{k,i})\}$.
So, AC2CD was stopped at the end of the first outer iteration $k$ satisfying
\begin{equation}\label{stop_cr}
G^k_{\text{min}} - G^k_{\text{max}} \ge -\epsilon,
\end{equation}
where $\epsilon$ is a scalar that was set to $10^{-1}$.
Since $\nabla_{p^k_i} f(z^{k,i})$ and $\nabla_{j(k)} f(z^{k,i})$ are not computed
when both $z^{k,i}_{p^k_i}$ and $z^{k,i}_{j(k)}$ are at the lower or at the upper bound, for the reasons explained above,
we also added a final check after that~\eqref{stop_cr} is satisfied, evaluating all those components of $\nabla f(x^k)$
that were skipped, if any.

For the convex problems considered in our experiments,
we used the final objective value $f^{\text{AC2CD}}$ returned by AC2CD as benchmark for termination of RCD and MVP.
Namely, RCD and MVP were stopped when they produced a point $x^k$ satisfying
\begin{equation}\label{opt_err}
\frac{f(x^k)-f^{\text{AC2CD}}}{1+\abs{f^{\text{AC2CD}}}} \le \nu,
\end{equation}
where $\nu$ is a scalar that was set to $10^{-6}$.
Clearly, a termination criterion based on~\eqref{opt_err} cannot be used when the objective function is non-convex,
since the algorithms may converge to different stationary points.
So, for the non-convex problems~\eqref{non_convex_prob}, the same termination criterion used in AC2CD was also used in RCD
(recall that we consider an outer iteration of RCD as made of $n$ inner iterations),
while MVP, that computes $\nabla f(x^k)$ at each iteration $k$, was stopped when it produced a point $x^k$ such that
$\min_{i=1,\ldots,n} \{\nabla_i f(x^k)\} - \max_{i \colon x_i>0} \{\nabla_i f(x^k)\} \ge -10^{-1}$.

\subsection{Chebyshev center}
Given a finite set of vectors $v^1,\ldots,v^n \in \R^m$, the Chebyshev center problem consists in finding the
smallest ball that encloses all the given points.
It arises in many fields, such as mechanical engineering, biology, environmental science and computer graphics
(see~\cite{xu:2003} and the references therein for more details).
The Chebyshev center problem can be formulated as the following convex standard quadratic problem:
\[
\begin{split}
& \min \, f(x) = \sum_{i=1}^n \sum_{j=1}^n (v^i)^T v^j x_i x_j - \sum_{i=1}^n \norm{v^i}^2  x_i \\
& \sum_{i=1}^n x_i = 1 \\
& x_i \ge 0, \quad i = 1,\ldots,n.
\end{split}
\]

Nine synthetic data sets were created by randomly generating each component of the samples $v^1,\ldots,v^n$ from a standard normal distribution,
using $n = 40000, 60000, 80000$ and $m = 0.01 n, 0.05 n, 0.1 n$ for every fixed $n$.
The nine data sets are listed below:
\begin{enumerate}[label=(\roman*), leftmargin=*, itemsep=-1ex]
\item $n = 40000$, $m = 400$; \label{chebyshev_n_40000_m_400}
\item $n = 40000$, $m = 2000$; \label{chebyshev_n_40000_m_2000}
\item $n = 40000$, $m = 4000$; \label{chebyshev_n_40000_m_4000}
\item $n = 60000$, $m = 600$; \label{chebyshev_n_60000_m_600}
\item $n = 60000$, $m = 3000$; \label{chebyshev_n_60000_m_3000}
\item $n = 60000$, $m = 6000$; \label{chebyshev_n_60000_m_6000}
\item $n = 80000$, $m = 800$; \label{chebyshev_n_80000_m_800}
\item $n = 80000$, $m = 4000$; \label{chebyshev_n_80000_m_4000}
\item $n = 80000$, $m = 8000$. \label{chebyshev_n_80000_m_8000}
\end{enumerate}

For each data set, a randomly chosen vertex of the unit simplex was used as starting point.
In AC2CD, for all $k \ge 1$ the index $j(k)$ was computed as in~\eqref{rate_j_k}, with $\tau = 0.9$
(for $k = 0$ we set $j(k) \in \argmax_{h=1,\ldots,n} D_h(x^k)$),
and the stepsize was computed as described in the last part of Subsection~\ref{subsec:lips} for quadratic objective functions,
with $\gamma = 1/2$ and $A_u = 10^{12}$.
In RCD, an $\mathcal O(1)$ procedure was used at each inner iteration to randomly choose, from a uniform distribution,
the pair of distinct variables to be updated.
In particular, this procedure first randomly generates a real number $r$ from a uniform distribution on $(0,\frac{n(n-1)}2)$, and then sets
$i = 1 + \bigl\lfloor (\sqrt{1+8\lfloor r \rfloor}+1)/2 \bigr\rfloor$ and $j = 1 + \bigl\lfloor \lfloor r \rfloor - (i-2)(i-1)/2 \bigr\rfloor$,
where $\lfloor \cdot \rfloor$ denotes the floor operation.

In Table~\ref{tab:chebyshev}, we report the results for each algorithm in terms of final objective value, number of (outer) iterations and CPU time in seconds.
To analyze how fast the objective function decreases in the three considered algorithms,
in Figure~\ref{fig:chebyshev} we plot the normalized optimization error $E^k$ versus the CPU time.
Coherently with the termination criterion used in the experiments, $E^k$ is computed as the left-hand side of~\eqref{opt_err}.

{\setlength{\tabcolsep}{0.53em}
\begin{table}
\centering
\caption{Results of AC2CD, RCD and MVP on Chebyshev center problems.
For each algorithm, the first column indicates the final objective value, the second column indicates the number of (outer) iterations
and the third column indicates the CPU time in seconds.}
{\begin{tabular}{ c | c c c | c c c | c c c}
\hline
\multirow{3}*{\minitab[c]{\textbf{Data} \\ \textbf{set}}}
& \multicolumn{3}{c|}{\textbf{AC2CD}} & \multicolumn{3}{c|}{\textbf{RCD}}  & \multicolumn{3}{c}{\textbf{MVP}} \bigstrut[t] \\
& \multirow{2}*{Obj} & Outer & Time   & \multirow{2}*{Obj} & Outer & Time  & \multirow{2}*{Obj} & \multirow{2}*{Iter} & Time \bigstrut[t] \\
&                    & iter  & (s)    &                    & iter  & (s)   &                    &                     & (s) \bigstrut[b] \\
\hline
\ref{chebyshev_n_40000_m_400} & $-492.77$ & $27$ & $1.43$ & $-492.77$ & $4520$ & $15.26$ & $-492.77$ & $183$ & $0.98$ \bigstrut[t] \\
\ref{chebyshev_n_40000_m_2000} & $-2191.46$ & $26$ & $2.65$ & $-2191.46$ & $2103$ & $11.95$ & $-2191.46$ & $218$ & $4.95$ \\
\ref{chebyshev_n_40000_m_4000} & $-4254.52$ & $25$ & $8.02$ & $-4254.52$ & $1501$ & $24.78$ & $-4254.52$ & $281$ & $12.62$ \\
\ref{chebyshev_n_60000_m_600} & $-716.28$ & $31$ & $2.69$ & $-716.28$ & $6160$ & $31.31$ & $-716.28$ & $168$ & $1.92$ \\
\ref{chebyshev_n_60000_m_3000} & $-3228.41$ & $26$ & $7.19$ & $-3228.40$ & $2225$ & $27.45$ & $-3228.40$ & $308$ & $15.48$ \\
\ref{chebyshev_n_60000_m_6000} & $-6314.02$ & $24$ & $13.59$ & $-6314.01$ & $1602$ & $40.61$ & $-6314.01$ & $365$ & $36.42$ \\
\ref{chebyshev_n_80000_m_800} & $-933.28$ & $30$ & $3.86$ & $-933.28$ & $7391$ & $52.48$ & $-933.28$ & $202$ & $3.73$ \\
\ref{chebyshev_n_80000_m_4000} & $-4267.89$ & $27$ & $16.83$ & $-4267.89$ & $2563$ & $58.99$ & $-4267.89$ & $368$ & $32.50$ \\
\ref{chebyshev_n_80000_m_8000} & $-8370.25$ & $25$ & $21.00$ & $-8370.24$ & $1743$ & $58.22$ & $-8370.24$ & $421$ & $75.95$ \bigstrut[b] \\
\hline
\end{tabular}}
\label{tab:chebyshev}
\end{table}
}

\begin{figure}
\centering
\includegraphics[trim = 0.4cm 0.9cm 0.4cm 0.7cm, clip]{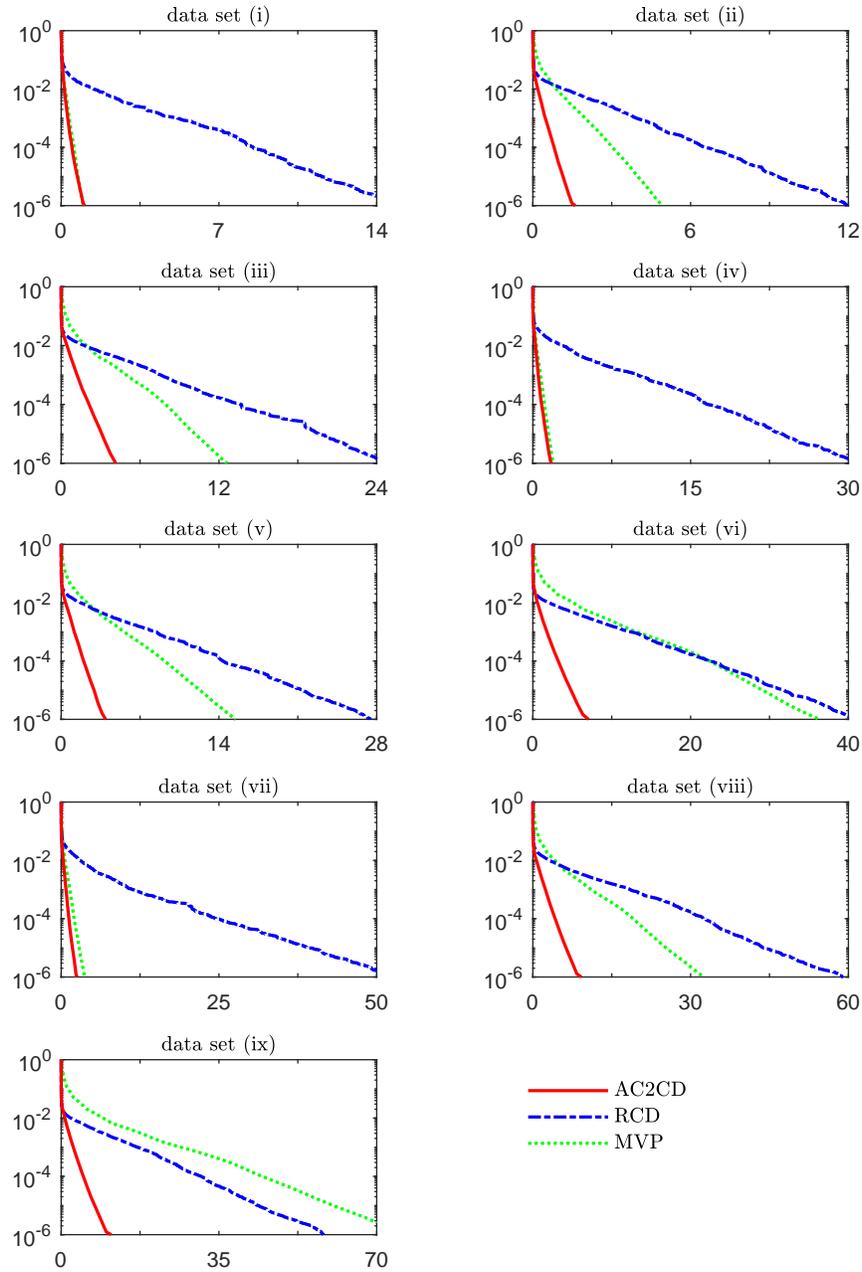}
\caption{Normalized optimization error ($y$ axis) versus CPU time in seconds ($x$ axis) for AC2CD, RCD and MVP on Chebyshev center problems.
The $y$ axis is in logarithmic scale and, for each algorithm, the normalized optimization error is computed as the left-hand side of~\eqref{opt_err}.}
\label{fig:chebyshev}
\end{figure}

We see that, on all the considered data sets, AC2CD outperforms RCD both in CPU time, by a factor between $2$ and $14$,
and in the number of outer iterations, by a factor between $60$ and $247$.
Looking at Figure~\ref{fig:chebyshev} more in detail, we observe that AC2CD and RCD are comparable in the first iterations,
but AC2CD is able to compute a solution with higher precision in a smaller amount of time.
In comparison with MVP, we observe that AC2CD is slightly slower on the data sets with $m = 0.01n$, i.e.,
data sets~\ref{chebyshev_n_40000_m_400}, \ref{chebyshev_n_60000_m_600} and~\ref{chebyshev_n_80000_m_800}.
On all the other data sets, on average AC2CD is more than $2$ times faster than MVP in CPU time.

\subsection{Linear SVM}
Linear Support Vector Machine~\cite{boser:1992} is a popular technique for data classification,
which aims at separating a given set of samples by a hyperplane.
Formally, let $v^1,\ldots,v^n \in \R^m$ be a finite set of vectors and $a^1,\ldots,a^n \in \{-1,+1\}$ be the corresponding labels.
To train a linear SVM, we can solve the following convex quadratic problem:
\[
\begin{split}
& \min \, f(x) = \frac 1 2 \sum_{i=1}^n \sum_{j=1}^n a^i a^j (v^i)^T v^j x_i x_j - \sum_{i=1}^n x_i \\
& \sum_{i=1}^n a^i x_i = 0 \\
& 0 \le x_i \le C, \quad i = 1,\ldots,n,
\end{split}
\]
where $C$ is a positive parameter, set to $1$ in our experiments.
As mentioned in Section~\ref{sec:prel}, the above problem can be easily rewritten as in~\eqref{prob}.

Eight data sets were downloaded from the LIBSVM~\cite{chang:2011} webpage \url{https://www.csie.ntu.edu.tw/~cjlin/libsvmtools/datasets}.
They are listed below:
\begin{enumerate}[label=(\roman*), leftmargin=*, itemsep=-1ex]
\item gisette ($n = 6000$, $m = 5000$); \label{svm_dataset_1}
\item rcv1 ($n = 20242$, $m = 47236$); \label{svm_dataset_2}
\item a9a ($n = 32561$, $m = 123$); \label{svm_dataset_3}
\item w8a ($n = 49749$, $m = 300$); \label{svm_dataset_4}
\item ijcnn1 ($n = 49990$, $m = 22$); \label{svm_dataset_5}
\item real sim ($n = 72309$, $m = 20958$); \label{svm_dataset_6}
\item webspam ($n = 350000$, $m = 254$); \label{svm_dataset_7}
\item covtype ($n = 581012$, $m = 54$). \label{svm_dataset_8}
\end{enumerate}

For each data set, we used as starting point a vector made of all zeros except for two randomly chosen variables
that were set strictly between the lower and the upper bound.
In AC2CD, the index $j(k)$ and the stepsize were computed as described before for the Chebyshev center problem.
In RCD, the working set was randomly chosen at each inner iteration by the same $\mathcal O(1)$ procedure used for the Chebyshev center problem.

The results for each algorithm are reported in Table~\ref{tab:svm} in terms of final objective value, number of (outer) iterations and CPU time in minutes.
On the first six data sets, which have less than $10^5$ samples, we see that MVP has the lowest CPU time,
but AC2CD still outperforms RCD. In particular, on these problems AC2CD is on average faster than RCD by a factor of almost $5$ in CPU time
and by a factor larger than $11$ in the number of outer iterations.
On the two largest data sets, having more than $10^5$ samples, AC2CD achieves the best performances.
In particular, considering the CPU time, on data set~\ref{svm_dataset_7} AC2CD is
more than $42$ times faster than RCD and more than $4$ times faster than MVP,
while on data set~\ref{svm_dataset_8} AC2CD is about $15$ times faster than RCD and about $13$ times faster than MVP.

{\setlength{\tabcolsep}{0.28em}
\begin{table}
\centering
\caption{Results of AC2CD, RCD and MVP on linear SVM training problems.
For each algorithm, the first column indicates the final objective value, the second column indicates the number of (outer) iterations
and the third column indicates the CPU time in minutes.}
{\begin{tabular}{ c | c c c | c c c | c c c}
\hline
\multirow{3}*{\minitab[c]{\textbf{Data} \\ \textbf{set}}}
& \multicolumn{3}{c|}{\textbf{AC2CD}} & \multicolumn{3}{c|}{\textbf{RCD}}  & \multicolumn{3}{c}{\textbf{MVP}} \bigstrut[t] \\
& \multirow{2}*{Obj} & Outer & Time   & \multirow{2}*{Obj} & Outer & Time  & \multirow{2}*{Obj} & \multirow{2}*{Iter} & Time \bigstrut[t] \\
&                    & iter  & (min)  &                    & iter  & (min) &                    &                     & (min) \bigstrut[b] \\
\hline
\ref{svm_dataset_1} & $-0.67$ & $35$ & $4.67$ & $-0.67$ & $97$ & $9.54$ & $-0.67$ & $3199$ & $1.60$ \bigstrut[t] \\
\ref{svm_dataset_2} & $-1745.37$ & $29$ & $25.79$ & $-1745.36$ & $101$ & $65.45$ & $-1745.36$ & $6642$ & $0.87$ \\
\ref{svm_dataset_3} & $-11432.18$ & $392$ & $3.63$ & $-11432.17$ & $4961$ & $27.19$ & $-11432.17$ & $76973$ & $1.44$ \\
\ref{svm_dataset_4} & $-1486.57$ & $491$ & $13.26$ & $-1486.57$ & $7398$ & $31.99$ & $-1486.57$ & $40957$ & $0.92$ \\
\ref{svm_dataset_5} & $-8589.43$ & $101$ & $0.45$ & $-8589.42$ & $2623$ & $4.35$ & $-8589.42$ & $17013$ & $0.36$ \\
\ref{svm_dataset_6} & $-5344.92$ & $39$ & $86.01$ & $-5344.91$ & $284$ & $436.74$ & $-5344.91$ & $21505$ & $4.63$ \\
\ref{svm_dataset_7} & $-69448.61$ & $60$ & $9.79$ & $-69448.55$ & $2298$ & $414.86$ & $-69448.55$ & $62436$ & $41.59$ \\
\ref{svm_dataset_8} & $-337942.88$ & $200$ & $25.23$ & $-337942.55$ & $3029$ & $378.62$ & $-337942.55$ & $936104$ & $328.81$ \bigstrut[b] \\
\hline
\end{tabular}}
\label{tab:svm}
\end{table}
}

\subsection{Problems with no bounds on the variables}
The last convex test problems in our experiments are of the form of~\eqref{unbound_prob}.
We considered the same class of objective functions used in~\cite{necoara:2017,xiao:2006}, that is,
\[
f_i(x) = \frac 1 2 a_i (x_i-c_i)^2 + \log\bigl(1+\exp(b_i(x_i-d_i))\bigr), \quad i = 1,\ldots,n,
\]
where $a_i > 0$, $i=1,\ldots,n$, and $b_i$, $c_i$, $d_i$ $\in \R$, $i=1,\ldots,n$.
It is possible to show that each $f_i$ is strongly convex with constant $a_i$
and has a Lipschitz continuous derivative with constant $L_i = a_i + (1/4)b_i^2$ (see~\cite{necoara:2017,xiao:2006}).

Six artificial problems were created. The first three have the following dimension:
\begin{enumerate*}[label=(\roman*)]
\item $n = 5000$, \label{no_bounds_n_5000_mata_1}
\item $n = 10000$, \label{no_bounds_n_10000_mata_1}
\item $n = 20000$, \label{no_bounds_n_20000_mata_1}
\end{enumerate*}
with $a_i$ randomly generated from a uniform distribution on $(0, 15)$ and $b_i$, $c_i$, $d_i$ randomly generated from a uniform distribution on $(-15, 15)$.
The last three problems have the same dimension as the previous ones, i.e.,
\begin{enumerate*}[label=(\roman*), start=4]
\item $n = 5000$, \label{no_bounds_n_5000_mata_2}
\item $n = 10000$, \label{no_bounds_n_10000_mata_2}
\item $n = 20000$, \label{no_bounds_n_20000_mata_2}
\end{enumerate*}
but with $a_i$ randomly generated from a uniform distribution on $(0, 2)$, $b_i$ randomly generated from a uniform distribution on $(-2, 2)$
and $c_i$, $d_i$ randomly generated from a uniform distribution on $(-10, 10)$.

In all of these problems, the zero vector was used as starting point.
In AC2CD, the index $j(k)$ was maintained fixed for all $k \ge 0$ and was chosen as an element of the set $\argmax_{i=1,\ldots,n} 1/L_i$,
while the stepsize was computed as described in Proposition~\ref{prop:lips_stepsize}, with $\gamma = 1/2$ and $\bar L_{i,j}$ replaced by $L_i + L_j$, $i,j = 1,\ldots,n$
(see Remark~\ref{rem:lips_stepsize}).

Our algorithm was compared with two versions of RCD proposed in~\cite{necoara:2017}, using blocks made of two variables
and different probability distributions (studied in~\cite{necoara:2017}), which are now described.
Denoting by $p_{ij}$ the probability to select a pair of distinct variable indices $(i,j)$
at any inner iteration of RCD, we first used a uniform distribution, i.e., all $p_{ij}$ have the same value,
and then we used probabilities that depend on the Lipschitz constants, i.e., each $p_{ij}$ is equal to
$(L_i^{-1} + L_j^{-1}) / \sum_{\substack{h,t = 1,\ldots,n \\ h \ne t}} (L_h^{-1} + L_t^{-1})$.
We name the two resulting algorithms RCD$_{\text{unif}}$ and RCD$_{\text{Lips}}$, respectively.
More in detail, to choose the working set at any inner iteration, in RCD$_{\text{unif}}$ we used the same $\mathcal O(1)$ procedure
described before for the Chebyshev center problem, while in RCD$_{\text{Lips}}$ we used the random generator
proposed by Nesterov in~\cite{nesterov:2012}, adjusted for our purposes.
It requires to randomly generate one real number from a uniform distribution on $(0,1)$ and perform $\mathcal O(\ln (\frac{n(n-1)}2))$ operations on some vectors
(whose preliminary definition has a cost that has not been included in the final statistics).

In Table~\ref{tab:no_bounds}, we report the results for each algorithm in terms of final objective value, number of outer iterations and CPU time in seconds.
We see that, on the first three problems, RCD$_{\text{unif}}$ achieves the best performances: in terms of CPU time it is faster than AC2CD
by a factor of about $1.5$, but AC2CD is almost $13$ times faster than RCD$_{\text{Lips}}$ on average.
On the last three problems, in terms of CPU time AC2CD outperforms both RCD$_{\text{unif}}$ and RCD$_{\text{Lips}}$
by an average factor of about $2$ and $15$, respectively.
Moreover, we observe that the number of outer iterations of AC2CD and RCD$_{\text{Lips}}$ is similar on all the considered problems,
but the amount of time needed to converge is remarkably different, with AC2CD being much faster.
This is due to the procedure used in RCD$_{\text{Lips}}$ to randomly generate, at each inner iteration, a pair of variable indices from a
Lipschitz-dependent probability distribution.

{\setlength{\tabcolsep}{0.32em}
\begin{table}
\centering
\caption{Results of AC2CD, RCD$_{\text{unif}}$ and RCD$_{\text{Lips}}$ on singly linearly constrained problems with no bounds on the variables.
For each algorithm, the first column indicates the final objective value, the second column indicates the number of outer iterations
and the third column indicates the CPU time in seconds.}
{\begin{tabular}{ c | c c c | c c c | c c c}
\hline
\multirow{3}*{\textbf{Problem}}
& \multicolumn{3}{c|}{\textbf{AC2CD}} & \multicolumn{3}{c|}{\textbf{RCD$_{\text{unif}}$}} & \multicolumn{3}{c}{\textbf{RCD$_{\text{Lips}}$}} \bigstrut[t] \\
& \multirow{2}*{Obj}  & Outer & Time  & \multirow{2}*{Obj}  & Outer & Time                & \multirow{2}*{Obj} & Outer & Time \bigstrut[t] \\
&                     & iter  & (s)   &                     & iter  & (s)                 &                    & iter  & (s) \bigstrut[b]  \\
\hline
\ref{no_bounds_n_5000_mata_1} & $166966.56$ & $40614$ & $87.42$ & $166966.73$ & $29625$ & $58.64$ & $166966.73$ & $40115$ & $1001.08$ \bigstrut[t] \\
\ref{no_bounds_n_10000_mata_1} & $336590.16$ & $7738$ & $33.61$ & $336590.50$ & $5969$ & $23.84$ & $336590.50$ & $7251$ & $414.50$ \\
\ref{no_bounds_n_20000_mata_1} & $671944.75$ & $46216$ & $387.71$ & $671945.42$ & $34181$ & $275.61$ & $671945.42$ & $43710$ & $5711.38$ \\
\ref{no_bounds_n_5000_mata_2} & $14510.90$ & $96$ & $0.19$ & $14510.91$ & $180$ & $0.37$ & $14510.92$ & $101$ & $2.57$ \\
\ref{no_bounds_n_10000_mata_2} & $30792.48$ & $484$ & $1.94$ & $30792.51$ & $1011$ & $4.11$ & $30792.51$ & $481$ & $27.76$ \\
\ref{no_bounds_n_20000_mata_2} & $58864.11$ & $882$ & $6.69$ & $58864.16$ & $1507$ & $12.31$ & $58864.16$ & $888$ & $117.41$ \bigstrut[b] \\
\hline
\end{tabular}}
\label{tab:no_bounds}
\end{table}
}

\subsection{Non-convex problems}\label{subsec:non_convex}
To test how AC2CD works when the objective function is non-convex, we finally considered problems of the form of~\eqref{non_convex_prob}.
Each problem was created by the following procedure: first the elements of $Q$ and those of $q$ were randomly generated from
a standard normal distribution and from a uniform distribution on $(0,1)$, respectively;
then the diagonal elements of $D$ were set to $1$, except for a prefixed number of them that were randomly chosen
and set to negative values randomly generated from a uniform distribution on $(-1,0)$.

More in detail, we generated three problems by fixing $n = m = 7000$ and
considering a number of negative diagonal elements of $D$ equal to $0.35m$, $0.5m$ and $0.65m$, respectively.
The three problems are summarized below:
\begin{enumerate}[label=(\roman*), leftmargin=*, itemsep=-1ex]
\item $n = m = 7000$, $n_{\text{neg}} = 2450$, $n_{\text{pos}} = 4550$,
    $\lambda_{\text{min}} = -9.19 \cdot 10^3$, $\lambda_{\text{max}} = 2.18 \cdot 10^4$;\label{non_convex_n_7000_neg_2450}
\item $n = m = 7000$, $n_{\text{neg}} = 3500$, $n_{\text{pos}} = 3500$,
    $\lambda_{\text{min}} = -1.13 \cdot 10^4$, $\lambda_{\text{max}} = 1.90 \cdot 10^4$;\label{non_convex_n_7000_neg_3500}
\item $n = m = 7000$, $n_{\text{neg}} = 4550$, $n_{\text{pos}} = 2450$,
    $\lambda_{\text{min}} = -1.30 \cdot 10^4$, $\lambda_{\text{max}} = 1.60 \cdot 10^4$;\label{non_convex_n_7000_neg_4550}
\end{enumerate}
where $n_{\text{neg}}$ denotes the number of negative eigenvalues of $Q^T D Q$,
$n_{\text{pos}}$ denotes the number of positive eigenvalues of $Q^T D Q$,
$\lambda_{\text{min}}$ denotes the smallest eigenvalue of $Q^T D Q$
and $\lambda_{\text{max}}$ denotes the largest eigenvalue of $Q^T D Q$.
Since in the non-convex case the final objective value found by an algorithm can depend on the starting point,
for each problem we considered $10$ different starting points, randomly chosen among the vertices of the unit simplex.

The procedures used to compute the stepsize and the index $j(k)$ in AC2CD, and that used to choose the working set in RCD,
were the same described before for the Chebyshev center problem.

In Table~\ref{tab:non_convex}, we report the results for each algorithm in terms of final objective value, number of (outer) iterations and CPU time in seconds.
For each problem, we use the acronyms \textit{sp 1, \ldots, sp 10} to distinguish the results obtained with the $10$ considered starting points,
while \textit{avg} indicates the results averaged over the $10$ runs.
We first observe that, for problems~\ref{non_convex_n_7000_neg_3500} and~\ref{non_convex_n_7000_neg_4550}, the final objective values found by AC2CD
are on average smaller than those found by RCD and MVP, while we have the opposite situation for problem~\ref{non_convex_n_7000_neg_2450}.
We also see that there is a notable difference between AC2CD and RCD in both CPU time and the number of outer iterations,
especially on problem~\ref{non_convex_n_7000_neg_4550}.
Finally, in comparison with MVP, we note that AC2CD is on average faster on problems~\ref{non_convex_n_7000_neg_2450} and~\ref{non_convex_n_7000_neg_3500},
but it is slower on problem~\ref{non_convex_n_7000_neg_4550}.

{\setlength{\tabcolsep}{0.33em}
\begin{table}
\centering
\caption{Results of AC2CD, RCD and MVP on non-convex problems.
For each algorithm, the first column indicates the final objective value, the second column indicates the number of (outer) iterations
and the third column indicates the CPU time in seconds.}
{\begin{tabular}{ D{-}{\,-\,}{5} | c c c | c c c | c c c}
\hline
\multicolumn{1}{c|}{\multirow{3}*{\textbf{Problem}}}
& \multicolumn{3}{c|}{\textbf{AC2CD}} & \multicolumn{3}{c|}{\textbf{RCD}}  & \multicolumn{3}{c}{\textbf{MVP}} \bigstrut[t] \\
& \multirow{2}*{Obj} & Outer & Time   & \multirow{2}*{Obj} & Outer & Time  & \multirow{2}*{Obj} & \multirow{2}*{Iter} & Time \bigstrut[t] \\
&                    & iter  & (s)    &                    & iter  & (s)   &                    &                     & (s) \bigstrut[b] \\
\hline
\text{\ref{non_convex_n_7000_neg_2450}} - \text{sp 1} & $-3.10$ & $925$ & $110.90$ & $-3.07$ & $3473$ & $135.45$ & $-3.12$ & $51768$ & $715.26$ \bigstrut[t] \\
\text{\ref{non_convex_n_7000_neg_2450}} - \text{sp 2} & $-3.11$ & $493$ & $57.91$ & $-3.06$ & $4670$ & $172.26$ & $-3.00$ & $9496$ & $131.18$ \\
\text{\ref{non_convex_n_7000_neg_2450}} - \text{sp 3} & $-3.09$ & $582$ & $67.56$ & $-3.14$ & $6585$ & $231.34$ & $-3.05$ & $24364$ & $336.40$ \\
\text{\ref{non_convex_n_7000_neg_2450}} - \text{sp 4} & $-2.99$ & $278$ & $33.01$ & $-3.10$ & $2896$ & $108.96$ & $-3.06$ & $30712$ & $424.11$ \\
\text{\ref{non_convex_n_7000_neg_2450}} - \text{sp 5} & $-3.07$ & $507$ & $59.29$ & $-3.13$ & $3139$ & $113.96$ & $-3.06$ & $31679$ & $437.19$ \\
\text{\ref{non_convex_n_7000_neg_2450}} - \text{sp 6} & $-3.03$ & $411$ & $48.16$ & $-3.13$ & $3563$ & $128.45$ & $-3.06$ & $21713$ & $300.07$ \\
\text{\ref{non_convex_n_7000_neg_2450}} - \text{sp 7} & $-2.97$ & $472$ & $55.40$ & $-3.14$ & $4325$ & $153.75$ & $-3.22$ & $47542$ & $657.03$ \\
\text{\ref{non_convex_n_7000_neg_2450}} - \text{sp 8} & $-3.14$ & $650$ & $75.42$ & $-3.04$ & $1776$ & $67.81$ & $-3.22$ & $32163$ & $444.68$ \\
\text{\ref{non_convex_n_7000_neg_2450}} - \text{sp 9} & $-3.05$ & $444$ & $51.85$ & $-3.14$ & $2606$ & $94.93$ & $-3.10$ & $30057$ & $415.61$ \\
\text{\ref{non_convex_n_7000_neg_2450}} - \text{sp 10} & $-3.09$ & $474$ & $55.45$ & $-3.09$ & $4199$ & $149.46$ & $-3.10$ & $19097$ & $263.63$ \bigstrut[b] \\
\hline
\text{\ref{non_convex_n_7000_neg_2450}} - \text{avg} & $-3.06$ & $523.60$ & $61.49$ & $-3.10$ & $3723.20$ & $135.63$ & $-3.10$ & $29859.10$ & $412.52$ \bigstrut[t] \bigstrut[b] \\
\hhline{=|=:=:=|=:=:=|=:=:=|}
\text{\ref{non_convex_n_7000_neg_3500}} - \text{sp 1} & $-7.91$ & $996$ & $94.34$ & $-8.19$ & $11696$ & $126.07$ & $-7.46$ & $2597$ & $35.35$ \bigstrut[t] \\
\text{\ref{non_convex_n_7000_neg_3500}} - \text{sp 2} & $-7.80$ & $738$ & $69.55$ & $-7.61$ & $9323$ & $100.88$ & $-7.69$ & $4206$ & $57.38$ \\
\text{\ref{non_convex_n_7000_neg_3500}} - \text{sp 3} & $-7.37$ & $202$ & $19.27$ & $-7.44$ & $3533$ & $40.79$ & $-7.87$ & $2698$ & $36.58$ \\
\text{\ref{non_convex_n_7000_neg_3500}} - \text{sp 4} & $-7.69$ & $292$ & $27.83$ & $-7.56$ & $16032$ & $168.96$ & $-7.41$ & $5122$ & $69.61$ \\
\text{\ref{non_convex_n_7000_neg_3500}} - \text{sp 5} & $-7.71$ & $445$ & $42.15$ & $-7.69$ & $12016$ & $126.37$ & $-7.53$ & $8379$ & $114.22$ \\
\text{\ref{non_convex_n_7000_neg_3500}} - \text{sp 6} & $-7.53$ & $512$ & $48.44$ & $-7.20$ & $5140$ & $61.16$ & $-7.71$ & $3804$ & $51.84$ \\
\text{\ref{non_convex_n_7000_neg_3500}} - \text{sp 7} & $-7.87$ & $390$ & $36.97$ & $-8.07$ & $23649$ & $252.05$ & $-7.71$ & $3015$ & $40.83$ \\
\text{\ref{non_convex_n_7000_neg_3500}} - \text{sp 8} & $-7.63$ & $284$ & $27.11$ & $-7.46$ & $6617$ & $71.65$ & $-7.56$ & $11563$ & $157.08$ \\
\text{\ref{non_convex_n_7000_neg_3500}} - \text{sp 9} & $-8.05$ & $389$ & $36.78$ & $-7.65$ & $16883$ & $187.66$ & $-7.53$ & $5112$ & $69.51$ \\
\text{\ref{non_convex_n_7000_neg_3500}} - \text{sp 10} & $-7.58$ & $764$ & $72.03$ & $-7.80$ & $5020$ & $59.14$ & $-7.61$ & $5694$ & $77.43$ \bigstrut[b] \\
\hline
\text{\ref{non_convex_n_7000_neg_3500}} - \text{avg} & $-7.72$ & $501.20$ & $47.45$ & $-7.67$ & $10990.90$ & $119.47$ & $-7.61$ & $5219.00$ & $70.98$ \bigstrut[t] \bigstrut[b] \\
\hhline{=|=:=:=|=:=:=|=:=:=|}
\text{\ref{non_convex_n_7000_neg_4550}} - \text{sp 1} & $-56.61$ & $32$ & $2.54$ & $-56.77$ & $48600$ & $65.35$ & $-44.98$ & $44$ & $0.59$ \bigstrut[t] \\
\text{\ref{non_convex_n_7000_neg_4550}} - \text{sp 2} & $-51.07$ & $37$ & $3.02$ & $-50.38$ & $12031$ & $26.98$ & $-54.02$ & $19$ & $0.25$ \\
\text{\ref{non_convex_n_7000_neg_4550}} - \text{sp 3} & $-72.15$ & $51$ & $4.30$ & $-48.97$ & $58935$ & $125.35$ & $-55.15$ & $59$ & $0.78$ \\
\text{\ref{non_convex_n_7000_neg_4550}} - \text{sp 4} & $-53.08$ & $32$ & $2.57$ & $-54.34$ & $19085$ & $41.03$ & $-48.57$ & $27$ & $0.37$ \\
\text{\ref{non_convex_n_7000_neg_4550}} - \text{sp 5} & $-53.35$ & $138$ & $11.63$ & $-56.94$ & $19784$ & $35.11$ & $-44.35$ & $29$ & $0.39$ \\
\text{\ref{non_convex_n_7000_neg_4550}} - \text{sp 6} & $-41.55$ & $120$ & $10.15$ & $-47.77$ & $48053$ & $58.37$ & $-58.46$ & $100$ & $1.33$ \\
\text{\ref{non_convex_n_7000_neg_4550}} - \text{sp 7} & $-70.33$ & $80$ & $6.92$ & $-71.63$ & $17600$ & $68.96$ & $-71.63$ & $13$ & $0.18$ \\
\text{\ref{non_convex_n_7000_neg_4550}} - \text{sp 8} & $-54.30$ & $74$ & $6.30$ & $-52.52$ & $26160$ & $27.18$ & $-54.20$ & $20$ & $0.27$ \\
\text{\ref{non_convex_n_7000_neg_4550}} - \text{sp 9} & $-67.51$ & $16$ & $1.23$ & $-71.77$ & $15153$ & $36.87$ & $-51.24$ & $73$ & $0.98$ \\
\text{\ref{non_convex_n_7000_neg_4550}} - \text{sp 10} & $-61.52$ & $60$ & $4.79$ & $-56.74$ & $8267$ & $9.33$ & $-47.67$ & $57$ & $0.75$ \bigstrut[b] \\
\hline
\text{\ref{non_convex_n_7000_neg_4550}} - \text{avg} & $-58.15$ & $64.00$ & $5.34$ & $-56.78$ & $27366.80$ & $49.45$ & $-53.03$ & $44.10$ & $0.59$ \bigstrut[t] \bigstrut[b] \\
\hline
\end{tabular}}
\label{tab:non_convex}
\end{table}
}

\section{Conclusions}\label{sec:conc}
In this paper, a block coordinate descent method has been presented for minimizing a continuously differentiable function
subject to one linear equality constraint and simple bounds on the variables.
In the proposed method, the working set is chosen according to an almost cyclic strategy that does not use first-order information.
So, the whole gradient of the objective function does not need to be computed during the algorithm,
leading to high efficiency when the problem dimension is large and the partial derivatives of the objective function are cheap.
Global convergence to stationary points has been established under an appropriate assumption on the level set,
and linear convergence rate has been proved under standard additional assumptions.
Promising numerical results have been obtained on different classes of test problems.

There are a number of open questions that indicate directions in which this work can be extended and that can represent challenging tasks for future research.
First, it would be worth investigating if, by suitably modifying the working set selection rule or adding conditions to the stepsize,
global convergence can be obtained without Assumption~\ref{assumpt:l0_int_point}.
Other interesting questions would be how to generalize the proposed method to problems with more than one linear equality constraint,
and how to adjust our approach to realize a parallel algorithmic scheme (for example, by a Jacobi-type approach).
We wish to report further results in the future.

\bibliography{cristofari2018}

\end{document}